\newcommand{\mc}{\mathcal}
\newcommand{\pt}{\partial}
  \newcommand{\M}{{\mathcal M}}
\newcommand{\br}{\mathbb{R}}
\newcommand{\R}{\mathbb{R}}
\newcommand{\eps}{\varepsilon}
\newcommand{\e}{\varepsilon}
\renewcommand{\(}{\left(}
\renewcommand{\)}{\right)}
\renewcommand{\[}{\left[}
\renewcommand{\]}{\right]}
\newcommand{\bb}{\mathbb}
\renewcommand{\div}{\operatorname{div}}
\newcommand{\deb}{\rightharpoonup}
\newcommand{\destar}{\overset{*}\deb}
\DeclareMathOperator{\argmin}{argmin}
\newtheorem{thm}{Theorem}
\newtheorem{lem}[thm]{Lemma}
\newtheorem{cor}[thm]{Corollary}
\newtheorem{prop}[thm]{Proposition}
\newtheorem{defi}[thm]{Definition}
\newtheorem{remark}[thm]{Remark}
\newtheorem*{assump*}{Assumption}
\newcommand{\be}{\begin{equation}}
\newcommand{\ee}{\end{equation}}
\newcommand{\bes}{\begin{equation*}}
\newcommand{\ees}{\end{equation*}}
\def\bea{\begin{eqnarray}}
\def\eea{\end{eqnarray}}
\newcommand{\f}{{\mathcal{F}}}
\newcommand{\G}{{\mathcal{G}}}
\renewcommand{\P}{{\mathcal{P}}}
\numberwithin{thm}{section}
\numberwithin{equation}{section}
\newcommand*\di{\mathop{}\!\mathrm{d}}
\author{Mikaela Iacobelli\thanks{Department of Mathematical Sciences, Durham University, Durham, Lower Mountjoy, DH1 3LE, UK.\newline
Email: \textsf{mikaela.iacobelli@durham.ac.uk}}
\and
Francesco S. Patacchini\
   \thanks{Deparment of Mathematical Sciences, Carnegie Mellon University, Pittsburgh, PA 15203, USA.\newline
Email: \textsf{fpatacch@math.cmu.edu}}
   \and
   \
   Filippo Santambrogio
   \thanks{Laboratoire de Math\'ematiques d'Orsay, Univ. Paris-Sud, CNRS, Universit\'e Paris-Saclay, 91405 Orsay, France.\newline
  Email:\textsf{filippo.santambrogio@math.u-psud.fr}}
   }
\title{Weighted ultrafast diffusion equations:\\ from well-posedness to long-time behaviour}
\begin{document}
\maketitle
\begin{abstract}
In this paper we devote our attention to a class of weighted ultrafast diffusion equations arising from the problem of quantisation for probability measures. These equations have a natural gradient flow structure in the space of probability measures endowed with the quadratic Wasserstein distance. Exploiting this structure, in particular through the so-called JKO scheme, we introduce a notion of weak solutions, prove existence, uniqueness, $BV$ and $H^1$ estimates, $L^1$ weighted contractivity,
Harnack inequalities, and exponential convergence to a steady state.
\end{abstract}


\section*{Introduction}\label{sec:intro}
In this work we investigate the well-posedness and the long-time behaviour of solutions $u=u(t,x)$ of the nonlinear diffusion equation
\begin{equation}\label{eq:fde}
	\partial_t u =\div(u^{\alpha-1}\nabla u)=\Delta(u^\alpha/\alpha) \quad\mbox{on }[0,\infty)\times \Omega,
\end{equation}
where $\alpha \in \R$ and $\Omega$ is a $d$-dimensional domain; we give specific hypotheses on $\Omega$ later. This class of equations may exhibit a whole spectrum of different behaviours as $\alpha$ varies. We are interested in the case $\alpha<0$, i.e., when \eqref{eq:fde} takes the name of \emph{ultrafast diffusion} equation.
This class of equations have completely different properties from those found in the case $\alpha\geq1,$ which corresponds to the \emph{porous medium} and \emph{heat} framework. Porous medium equations model slow diffusion phenomena and have been extensively studied in the last years; we refer the reader to the monographs by J. L. V\'azquez for a comprehensive theory \cite{Va,Va2}. The case $0\leq\alpha<1$ is commonly referred to as \emph{fast diffusion} equation; in particular, $\alpha=0$ gives the \emph{logarithmic diffusion} equation \cite{DasKe}.

When $\alpha<1$, existence and uniqueness of weak solutions of the Cauchy problem, as well as the asymptotic behaviour and the main qualitative properties, are well understood when $\alpha$ lies in the so-called \emph{good parameter range} $\max\,\{0,\alpha_\mathrm{c}\}<\alpha<1$, where $\alpha_\mathrm{c} := (d-2)/d$ is a critical exponent; see for instance \cite{HP}. The theory on diffusion equations is less developed in the \emph{subcritical range} $\alpha<\alpha_\mathrm{c}$, even under the condition $\alpha>0$, since the classical questions about existence, uniqueness and regularity become more challenging. 
A typical difficulty emerging in the subcritical case concerns the possible lack of positivity due to extinction in finite time: while in the good parameter range $\alpha>\alpha_\mathrm{c}$ the mass is conserved, if we consider the case of the Cauchy problem in the whole space ${\mathbb R}^d$ with $d\ge 3$ and  $0<\alpha<\alpha_\mathrm{c}, $ P. B\'enilan and M. G. Crandall \cite{BenCra} proved the extinction in finite time of solutions of the fast diffusion equation \eqref{eq:fde} when the initial datum is in some suitable  $L^p$ space.
In fact, solutions become identically zero in finite time for all $0<\alpha<1$ if considering the Cauchy problem in a bounded domain with Dirichlet boundary data; see \cite{BGV1,BGV} for more detail. Still in $\mathbb R^d$, the \emph{critical case} $\alpha = \alpha_\mathrm{c}$ is as well very challenging since it turns out that, for $d\geq3$, solutions exhibit two space regions in which they have different long-time behaviours; see \cite{GPV}. When $\Omega$ is a bounded domain, the situation is even more involved and we refer to the monograph \cite{Va}.

Most of the literature does not treat the \emph{very singular range} $\alpha < 0$, since the diffusivity $u^{\alpha-1}$ becomes extremely singular at $u=0$. In particular, in \cite{VazNonex} V\'azquez showed that if one considers the Cauchy problem in the whole space or in a bounded domain with zero Dirichlet boundary conditions, then solutions starting from  $L^1$ initial data become instantaneously identically zero, namely $u(t)\equiv 0$ for all $t>0$.
To circumvent this phenomenon, some authors have considered initial data that are not integrable and ``not too small'' at infinity; see \cite{E,VazNonex} among the older references, then \cite{BV,DasDelP,Vazquez}, and the books \cite{DasKe,Va} for a more exhaustive discussion on the problem. It is interesting to notice that \eqref{eq:fde} with $\alpha<0$ arises naturally in certain physical applications. For example, superdiffusivities of this type have been proposed in  \cite{DG} as a model for long-range Van der Waals interactions in thin films spreading on solid surfaces. This equation also appears in the study of cellular automata and interacting particle systems with self-organised criticality; see \cite{COR} for example. Other physical applications are mentioned in \cite{BH}.

Besides the motivations above, our interest in ultrafast diffusion equations stems from the problem of \emph{quantisation for probability measures}. This problem can be stated as follows: given an integer $N$, find an atomic measure with $N$ atoms that best approximates a given probability density $\rho$ on $\Omega\subset \br^d$ in the sense of Wasserstein distances of any order $p\geq1$. As explained in \cite{GL}, this is in fact equivalent to the following minimisation problem:
\begin{equation}
\label{eq:Sigma}
	\min_{\Sigma^N}\biggl\{\int_\Omega d(x, \Sigma^N)^p\rho(x)\di x\ \  \mbox{s.t.}\ \  \# \Sigma^N=N\biggr\},
\end{equation}
where $d(x,\Sigma^N)$ stands for the distance between the point $x\in\Omega$ and the set $\Sigma^N$, which is the support of the optimal measure. Note that, in this new formulation, the only unknowns are the locations of the points of the support $\Sigma^N$. A classical and important question concerns the asymptotics of a minimiser $\Sigma^N$ when the cardinality $N$ goes to infinity. 
In order to take such a limit, one defines the probability measure $\mu^N:=\frac{1}{N}\sum_{x \in \Sigma^N}\delta_x$.
Then, it is known (see \cite{FejTot,GL,MorBol}) that as $N\to \infty$ the measures $\mu^N$ weakly-* converge to a minimiser of the energy functional
$$
	\mathcal F_\rho[f]:=\int_\Omega \frac{\rho(x)}{f(x)^{p/d}}\di x, 
$$
defined for densities $f$ on $\Omega$. This convergence has also been investigated and justified from a $\Gamma$-convergence viewpoint in \cite{BouJimRaj} (a more general proof having been established in a similar case in \cite{MosTil}; see also \cite{Iac1} and \cite{Klo} on how geometry can affect the optimal location problem).

In \cite{CGI1}, the authors introduced a new approach to the quantisation problem based on gradient flows: their idea was to  study the evolution of the points of $\Sigma^N$ when they follow the steepest descent curves of the functional \eqref{eq:Sigma} (which is nothing but a continuous-time version of the well-known Lloyd's algorithm for the optimal quantisation; see \cite{Lloyd}, or \cite{BR,Merigot} for more recent accounts and related topics), and to compare it to the gradient flow of a continuous functional.
This analysis was performed in detail in the one-dimensional case in \cite{CGI1}, and in the two-dimensional case in \cite{CGI2} when $\rho\equiv1$.
There, the authors study the Lagrangian evolution of the particles in the support of $\Sigma^N$ under the gradient flow of \eqref{eq:Sigma} and prove quantitative convergence estimates to a continuous gradient flow. As observed in \cite{Iac2}, at least when $d=1$, this continuous Lagrangian evolution of particles corresponds, in Eulerian variables, to the gradient flow of $\mathcal F_{\rho}$ in the 2-Wasserstein sense.

Because of this, understanding the $2$-Wasserstein gradient flow of $\mathcal F_{\rho}$ is a natural problem.
Before computing the equation associated to this gradient flow, we first make a short comment about the boundary conditions: as explained in \cite{CGI1}, because of the preservation of the mass in the quantisation problem, a very natural boundary condition is the no-flux (that is, Neumann) one; of course, the easiest case is actually just to suppose that the domain is a torus, which is the same as supposing that $f$ is periodic. In the sequel we shall focus on these two boundary conditions: periodic, and no-flux on bounded domains.

To compute the $2$-Wasserstein gradient flow of $\f_{\rho}$ we note that, setting $r:=p/d$, the first variation density of $\f_\rho$ at a density $f$ is given by
$$
\frac{\delta \mathcal F_{\rho}[f]}{\delta f}= -\frac{r\rho}{f^{r+1}}.
$$
Hence, by Otto's calculus (see for instance \cite{Iac2,O}) and by the theory of gradient flows in Wasserstein space (see \cite{AGS,arabsurvey}), the gradient flow of the functional $\f_\rho$ in the $2$-Wasserstein metric is given by
\begin{equation*}
	\pt_tf(t,x)=\div_x\bigg(f(t,x)\nabla_x\(\frac{\delta \mathcal F_\rho[f(t)]}{\delta f}(x)\) \bigg)=
-r\div_x\(f(t,x)\nabla_x\(\frac{\rho(x)}{f(t,x)^{r+1}}\)\).
\end{equation*}
This is an {\em ultrafast diffusion} equation weighted by the density $\rho$. Indeed when $\rho\equiv1$ it corresponds (after a change of variable and up to a multiplicative constant; see \eqref{eq:u}) to \eqref{eq:fde} with exponent $\alpha=-r<0$; this is the so-called {ultrafast diffusion regime} for which, as already explained, solutions starting from $L^1$ initial data vanish instantaneously when set on the whole space or with zero Dirichlet boundary conditions. However, the natural framework where we study this equation includes mass preservation, and thus, as mentioned above, we shall consider only periodic or Neumann boundary conditions on a bounded domain (assumed to be convex for technical reasons; see later).

As the reader will notice going on with the paper, we perform an essentially complete analysis of this weighted ultrafast diffusion equation by combining two approaches: on the one hand, we exploit as much as we can the time-discretisation given by the so-called Jordan--Kinderlehrer--Otto scheme (see \cite{JorKinOtt}) and obtain many estimates using recent tools in optimal transportation; on the other hand, we obtain further results at the level of the continuous-time PDE. Each time we choose which approach to favour depending on the easiest one to adopt.

\section{Main results and plan of the paper}
In this section we introduce the notation and assumptions, we state our main results, and we give an overview of the paper. 

Let $r$ be a positive real number. Let $\Omega\subset\R^d$ be a $d$-dimensional domain: either the $d$-dimensional \emph{torus} $\mathbb{T}^d$, or a \emph{bounded convex} domain.
Let $\rho$ be a Borel \emph{probability density} on $\Omega$, which we write either $\rho \in \P(\Omega)$ with $\rho \ll \di x$ or, abusively, $\rho \in \P(\Omega) \cap L^1(\Omega)$.

We write $\M(\Omega)$ the set of finite nonnegative Borel measures on $\Omega$, so that $\P(\Omega) = \{ \rho \in \M(\Omega) : \rho(\Omega) = 1\}$. Let us give the definition of the $2$-Wasserstein distance. For any two $\mu,\nu \in \M(\Omega)$ with same total mass, we define the $2$-Wasserstein distance $W_2(\mu,\nu)$ between $\mu$ and $\nu$ by
\begin{equation*}
	W_2(\mu,\nu) = \inf_{\pi \in \Pi(\mu,\nu)} \left( \int_{\Omega \times \Omega} |x-y|^2 \di \pi(x,y) \right)^{1/2},
\end{equation*}
where $\Pi(\mu,\nu)$ is the set of all transport plans between $\mu$ and $\nu$, that is, the subset of $\M(\Omega) \times \M(\Omega)$ consisting of measures with $\mu$ as first marginal and $\nu$ as second marginal; see \cite{OTAM,Vill} for an exhaustive account on Wasserstein metrics. 

We want to investigate the properties of the following weighted ultrafast diffusion equation discussed in the introduction:
\begin{equation} \label{eq:f}
	\partial_t f(t,x)=-r\div_x\(f(t,x)\nabla_x\(\frac{\rho(x)}{f(t,x)^{r+1}}\)\) \quad \text{on  }[0,\infty) \times \Omega,
\end{equation}
where the unknown is $f \colon [0,\infty) \to L^1(\Omega)$, with boundary conditions
$$
\frac{\partial f(t,x)}{\partial n(x)}=0 \text{ on }[0,\infty)\times\partial\Omega \quad \text{if }\Omega \text{ is a bounded convex domain in }\R^d,
$$
for all $t \in [0,\infty)$, where $n(x)$ is the outward unit normal vector to $\partial \Omega$ at $x$.
Notice that when $\Omega$ is the torus, there is no boundary condition and we can consider $f$ to be a periodic function. When $\Omega$ is a bounded convex domain, the boundary condition above should be intended in a weak sense, which means (see Definition \ref{def:weak} below) that test functions will not be compactly supported in space.

This equation (including the boundary conditions) can be seen as the gradient flow in $W_2$ of the functional
\begin{equation*}
	\mathcal F_{\rho}[f]=\int_{\Omega} \frac{\rho(x)}{f(t,x)^r}\di x, \qquad f \in L^1(\Omega);
\end{equation*}
see later for a precise definition of this functional on arbitrary measures.

Let us consider a change of variable that was first introduced in \cite{CGI1}, and that will be very useful to prove several of our estimates: for all $(t,x) \in [0,\infty) \times \Omega$,
\begin{equation}\label{eq:change f to u}
u(t,x):=\frac{f(t,x)}{m(x)}, \qquad m(x):=\rho(x)^{\frac{1}{r+1}}.
\end{equation}
With this change of variable, equation \eqref{eq:f} becomes
\begin{equation}\label{eq:u}
	\partial_t u(t,x)=-\frac{r+1}{m(x)}\div_x\bigl(m(x)\nabla_x (u(t,x)^{-r})\bigr), \qquad (t,x) \in [0,\infty) \times \Omega.
\end{equation}

In order to state our results, we need first to introduce the class of solutions on which we can prove existence and uniqueness.
Note in particular that the assumption in Definition \ref{def:weak} that initial data belong to $L^{r+3}(\Omega)$ will be used to show that weak solutions exist;
see the proof of Lemma \ref{thm:existence-weak-1}.

\begin{defi}[Weak solutions of \eqref{eq:f} and \eqref{eq:u}] \label{def:weak}
	Given $f_0 \in L^{r+3}(\Omega)$ with $\mathcal F_{\rho}[f_0]<\infty$,
we say that $f$ is a \emph{weak solution} of \eqref{eq:f} starting from $f_0$ if $t\mapsto f(t)$ is a weakly-* continuous curve valued into $\mathcal M(\Omega)$, $f(0)=f_0$, $f$ solves \eqref{eq:f} in the sense of distributions, i.e., for all $\psi \in C_\mathrm{c}^\infty((0,\infty)\times \overline\Omega)$\footnote{Note that the closure $\overline\Omega$ is compact, thus we are not imposing boundary conditions on the test functions; this is used to impose no-flux boundary conditions on the solution.} we have
\begin{equation*}
	\int_0^\infty \int_\Omega \left( \pt_t \psi(t,x) - r \nabla_x \left( \frac{\rho(x)}{f(t,x)^{r+1}} \right) \cdot \nabla_x \psi(t,x) \right) f(t,x) \di x \di t = 0,
\end{equation*}
and the following bounds hold:
$$
	\frac{f}{m} \in L^2_{\rm loc}([0,\infty),H^1(\Omega)),\qquad \Bigl(\frac{f}{m}\Bigr)^{-r} \in L^2_{\rm loc}([0,\infty),H^1(\Omega)).
$$
In this case we say that $u:=f/m$ is a weak solution of \eqref{eq:u}.
\end{defi}

The Sobolev regularity conditions in the definition are crucial. First, it is important to observe that the equation has no distributional meaning if one does not assume any Sobolev regularity on the solution. Indeed, by looking at \eqref{eq:u}, one sees that the existence of weak derivatives for $u^{-r}$ is needed to define the divergence of $m\nabla (u^{-r})$. Then, the reader will see that these precise $H^1$ assumptions play a crucial role both in the proof of uniqueness in Theorem \ref{thm:existence-uniqueness} (to make sure that we can justify the computations) and in the proof of instantaneous regularisation (or boundedness) of solutions in Theorem \ref{thm:bddness-longtime} (to be able to use the Moser iteration inspired by \cite{Mos}; see also \cite{BV,DasKe}). Before stating our main results, let us give a standing assumption on the weight $m$ which will always hold in the paper:

\begin{assump*}[Sobolev assumption on $m$]
	The weight $m$ is such that $\log m\in W^{1,p}(\Omega)$ for an exponent $p>d$. In this way $\log m$ is continuous and bounded, which means that $m$ is continuous and bounded from above and below by positive constants. When needed, we will call $\lambda$ a positive constant such that $\lambda<m<\lambda^{-1}$. 
\end{assump*}

Although this assumption always implicitly holds, we will recall it in some results to emphasise its importance. Note that, because of this hypothesis on $m$ (and thus on $\rho$), the domain of the functional $\f_\rho$ is exactly the subset of densities $f$ such that $1/f \in L^r(\Omega)$. Throughout the paper, it will be sometimes necessary to assume extra properties on $\log m$ (in particular, semiconcavity), and to assume bounds on $f_0$. Since these properties and bounds are not always the same in every result we state later, we prefer not to detail them here but rather provide them whenever we need.

\begin{thm}[Existence and uniqueness]\label{thm:existence-uniqueness}
Suppose that $f_0 \in L^{r+3}(\Omega)$ with $\mathcal F_{\rho}[f_0]<\infty$. Then there exists a unique weak solution of \eqref{eq:f} starting from $f_0$.
\end{thm}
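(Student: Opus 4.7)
\emph{Strategy.} Since \eqref{eq:f} is formally the $W_2$-gradient flow of $\f_\rho$, existence goes naturally through the Jordan--Kinderlehrer--Otto (JKO) minimising-movement scheme. For uniqueness, the functional $\f_\rho$ is not displacement convex in the standard McCann sense (both $r>0$ and the weight $\rho$ work against it), so one cannot appeal to general EVI theory; instead, the Sobolev regularity built into Definition~\ref{def:weak}, combined with the monotone structure of the equation in the variable $u=f/m$, should allow a direct weighted $L^1$-contraction argument.

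\emph{Existence via JKO.} Fix $\tau>0$ and inductively define $f^{n+1}_\tau\in\argmin\{\f_\rho[g]+\tfrac{1}{2\tau}W_2^2(g,f^n_\tau):g\in\P(\Omega)\}$, starting from $f^0_\tau=f_0$. Minimisers at each step exist by the convexity of $s\mapsto \rho/s^r$, which yields weak-$*$ lower semicontinuity of $\f_\rho$, together with weak compactness in $\P(\Omega)$. Perturbing along optimal transport maps gives a discrete Euler--Lagrange relation between $f^{n+1}_\tau$, $f^n_\tau$, and the first variation $-r\rho/f^{r+1}$. The telescoping identity
\[
\f_\rho[f^N_\tau]+\tfrac{1}{2\tau}\sum_{n=0}^{N-1}W_2^2(f^{n+1}_\tau,f^n_\tau)\le \f_\rho[f_0]
\]
delivers a uniform $1/2$-H\"older bound in $W_2$ on the piecewise-constant interpolation $f_\tau$ and a uniform dissipation bound on $\int f\,|\nabla(\rho/f^{r+1})|^2 \di x$. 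Standard compactness then produces a subsequential limit $f$ solving~\eqref{eq:f} distributionally, and the dissipation bound translates, under the change of variable $u=f/m$, into an $L^2_tH^1_x$-type estimate on the nonlinear flux. The second Sobolev bound of Definition~\ref{def:weak}---on $u=f/m$ itself---is not a direct consequence of the variational step; I would produce it by dissipating an auxiliary Lyapunov functional (for instance $\int m\,u^{r+3}\di x$) along the JKO scheme, which is precisely where the $L^{r+3}$ hypothesis on $f_0$ should enter, as it guarantees the initial finiteness of that functional.

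\emph{Uniqueness.} Rewrite~\eqref{eq:u} as
\[
m\,\partial_t u=(r+1)\,\div\bigl(m\,\nabla\Phi(u)\bigr),\qquad \Phi(u)=-u^{-r},
\]
a filtration-type equation whose nonlinearity $\Phi$ is strictly increasing on $(0,\infty)$. Let $u_1,u_2$ be two weak solutions starting from the same datum, and set $w=u_1-u_2$ and $W=\Phi(u_1)-\Phi(u_2)$; monotonicity of $\Phi$ gives $\mathrm{sign}\,w=\mathrm{sign}\,W$. Approximating $|\cdot|$ by a smooth convex $h_\eps$, testing $m\,\partial_t w=(r+1)\div(m\nabla W)$ against $h_\eps'(W)$, and integrating by parts (the $H^1$ bound on $W$ together with the Neumann or periodic boundary condition killing all boundary terms) produces
\[
\int m\, h_\eps'(W)\,\partial_t w\,\di x=-(r+1)\int m\, h_\eps''(W)\,|\nabla W|^2 \di x\le 0.
\]
Passing $\eps\to 0$ and using $\mathrm{sign}\,W=\mathrm{sign}\,w$ gives the weighted $L^1$-contraction $\tfrac{\di}{\di t}\int m\,|u_1-u_2|\di x\le 0$; since the initial data coincide, $u_1\equiv u_2$.

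\emph{Main difficulty.} The hardest step is ensuring that the solution constructed by the JKO scheme genuinely lies in the class of Definition~\ref{def:weak}: one must both propagate a lower bound on $u$ (without which $\f_\rho$ blows up, a genuine concern in the ultrafast regime) and recover the $H^1$ bound on $u$ itself, not just on the flux $u^{-r}$. Both challenges route through auxiliary functional-dissipation estimates calibrated to the $L^{r+3}$ hypothesis on $f_0$; handling them at the level of the discrete scheme and then passing to the limit is where I expect the bulk of the technical work to lie.
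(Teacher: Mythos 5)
Your high-level roadmap (JKO for existence, weighted $L^1$ contraction in the variable $u=f/m$ for uniqueness, flow-interchange of $\int m\,u^{r+3}\di x$ to get the $H^1$ bound on $u$) matches the paper's. But there are concrete gaps on both halves.

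\textbf{Existence.} Two issues in your sketch are not addressed and are precisely where the technical work of Section~\ref{sec:time-disc} goes. First, the JKO minimiser is a priori just a measure; you must rule out singular parts. The paper does this via a \emph{discrete maximum principle} (Lemma~\ref{lem:max}): if $f_k^{(\tau)}$ lies between $c_0m$ and $C_0m$ so does the next iterate; this is proven by a second-order analysis of the optimality conditions at a max/min point using Caffarelli's regularity theory for the Monge--Amp\`ere equation. You don't mention this, and without it the scheme is not even set in the space of densities where $\f_\rho$ is finite. Second, to pass the nonlinear flux $\nabla(\rho/f^{r+1})$ to the limit you need strong (not just weak) convergence of $f^{(\tau)}$; ``standard compactness'' does not give this. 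The paper obtains it from a uniform $BV$ estimate (Lemma~\ref{lem:BV}), established via the five-gradients inequality of De Philippis--M\'esz\'aros--Santambrogio--Velichkov, which is a nontrivial extra ingredient. You also omit the final step of approximating a general $f_0$ by truncated data bounded between multiples of $m$ (Lemma~\ref{lem:stability} and Theorem~\ref{thm:existence-weak}), which is how the two-sided bounds on the iterates are justified in the first place.

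\textbf{Uniqueness.} Your test against $h_\eps'(W)$ with $W=\Phi(u_1)-\Phi(u_2)$ does give a clean nonnegative dissipation, but the left-hand side $\int m\,h_\eps'(W)\,\partial_t w\,\di x$ is \emph{not} a time derivative of any functional of $w$, so the step ``$\Rightarrow\tfrac{\di}{\di t}\int m|w|\le 0$'' is not justified. Writing $G_\eps(u,v)$ with $\partial_u G_\eps=h_\eps'(\Phi(u)-\Phi(v))$ does not give $\partial_v G_\eps=-h_\eps'(\Phi(u)-\Phi(v))$, so there is no antiderivative; and after $\eps\to 0$, turning $\int m\,\mathrm{sign}(w)\partial_t w$ into $\tfrac{\di}{\di t}\int m|w|$ is a chain rule that is delicate at the regularity level of Definition~\ref{def:weak} ($\partial_t w$ only in $L^2_{\rm loc}H^{-1}$). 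The paper sidesteps this by testing against $\psi_\eps'(u-v)$, which \emph{does} produce $\tfrac{\di}{\di t}\int m\psi_\eps(u-v)$; the price is a cross term $I_2$ that must be shown to vanish as $\eps\to 0$, and controlling it requires a uniform lower bound $v\ge c_0$ on one of the two solutions (Proposition~\ref{prop:contr}). Uniqueness for general data then follows by a separate approximation comparing $u$ with solutions started from $\max\{u_0,\eps\}$ (Theorem~\ref{thm:uniq sol}). Your argument, if it worked as stated, would give a strictly stronger result than the paper's contraction proposition, which should already raise a flag; the authors explicitly note they cannot prove the contraction without the one-sided lower bound.
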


With this theorem in hand, we can address the regularity properties of solutions and their long-time behaviour. In \cite{Iac2} the author showed that when $\rho$ is a positive smooth function and one considers smooth solutions of \eqref{eq:f} starting from initial data bounded away from zero and infinity, then as time goes to infinity the solution converges exponentially fast to the stationary state
\bes
	x \mapsto M \gamma m(x), \qquad M:=\int_\Omega f_0, \quad \gamma:=\left(\int_\Omega m \right)^{-1},
\ees
so that $f$ converges to a stationary state with the same mass\footnote{In the literature related to the quantisation problem, $f_0$ is a probability measure \cite{CGI1, Iac2}; on the other hand, in the literature about fast-diffusion equations the mass is arbitrary, and often nonpreserved. For the sake of generality we admit here arbitrary masses and arbitrary initial data in $L^1(\Omega)$.} as $f_0$. We recover in this paper the same convergence as in \cite{Iac2} without the initial boundedness condition, since we prove instantaneous upper and lower bounds (usually called Harnack inequalities) beforehand. Although upper bounds are rather classical in these settings, lower bounds are nonstandard and actually false in many situations \cite{VazNonex}. In our case it is crucial that we work with periodic or no-flux boundary conditions.

As we shall see, this result is crucial for the long time behaviour because, once the solution is bounded and bounded away from zero, the singular/degenerate character of the equation is not predominant and the solutions behave like standard parabolic equations.

\begin{thm}[Harnack inequalities]\label{thm:bddness-longtime}
	Suppose that $f$ is a weak solution of \eqref{eq:f} starting from some density $f_0$. Assume the following integrability properties on $f_0$: we have $\int_\Omega ( f_0^{p_+}+f_0^{-p_-})<\infty$, where $p_+\geq r+3, \;p_+>(r+1)d/2,\; p_->(r+1)\max\(1,d/2\)$. Then, for any $t>0$ there exists a  constant $C_t>0$ (nonincreasing in $t$) such that
\bes
	\frac{1}{C_t}\leq f(s)\leq C_t\qquad \text{for all $s \geq t$}.
\ees
\end{thm}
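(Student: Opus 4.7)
The plan is to work with the transformed unknown $u = f/m$ satisfying \eqref{eq:u} and to obtain separately $L^\infty$ upper and lower bounds on $u$ by Moser iteration in the spirit of \cite{Mos,BV,DasKe}. Since the standing assumption gives $\lambda \le m \le \lambda^{-1}$, such bounds translate immediately into the claimed Harnack estimates on $f$.

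The two fundamental identities I would use are obtained by testing \eqref{eq:u} against $m u^{p-1}$ and against $-\gamma m u^{-\gamma-1}$, integrating by parts, and exploiting the vanishing of boundary terms under the no-flux or periodic conditions:
\begin{equation*}
	\frac{d}{dt}\int_\Omega m u^p \,dx = -\frac{4p(p-1)r(r+1)}{(p-r-1)^2}\int_\Omega m\,|\nabla u^{(p-r-1)/2}|^2 \,dx \qquad (p>r+1),
\end{equation*}
\begin{equation*}
	\frac{d}{dt}\int_\Omega m u^{-\gamma}\,dx = -\frac{4\gamma(\gamma+1)r(r+1)}{(\gamma+r+1)^2}\int_\Omega m\,|\nabla u^{-(\gamma+r+1)/2}|^2\,dx \qquad (\gamma>0).
\end{equation*}
These would first be derived at the level of the JKO iterates used to construct weak solutions in Theorem \ref{thm:existence-uniqueness}, where enough extra regularity makes every manipulation licit, and then passed to the limit using the $H^1$ integrability built into Definition \ref{def:weak}.

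Each identity encodes both a monotone decay of a weighted $L^p$ (resp. negative-moment $L^\gamma$) functional of $u$ and a space-time gradient bound on $u^{(p-r-1)/2}$ (resp. $u^{-(\gamma+r+1)/2}$). Combined with the Sobolev embedding $H^1\hookrightarrow L^{2d/(d-2)}$ (and its variants for $d\le 2$) and the classical parabolic interpolation between $L^\infty_tL^q_x$ and $L^2_tH^1_x$, this yields the Moser iteration step: an $L^q$ bound on $u$ (resp. $u^{-1}$) on a time slab upgrades to an $L^{q(1+2/d)}$ bound on a slightly smaller slab. Iterating over a dyadic sequence of time windows shrinking around any chosen $t>0$, one reaches in the limit the pointwise bounds $\|u(s)\|_\infty \le C_t$ and $\|u(s)^{-1}\|_\infty\le C_t$ for all $s\ge t$, with $C_t$ depending on $t$ and on $\int_\Omega(f_0^{p_+}+f_0^{-p_-})\,dx$. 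The assumptions $p_+\ge r+3$ and $p_+>(r+1)d/2$ ensure respectively that the first iteration for the upper bound is compatible with Definition \ref{def:weak} and that it strictly improves the exponent; $p_->(r+1)\max(1,d/2)$ plays exactly the same role for the lower bound, the two regimes $d\le 2$ and $d\ge 3$ corresponding to the two Sobolev embedding regimes. The nonincreasing character of $C_t$ follows from the monotonicity of the weighted $L^p$ and negative-moment functionals in time.

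The main technical obstacle is twofold: justifying the formal integrations by parts uniformly in the very large exponents produced along the iteration, and tracking precisely how the multiplicative constants depend on $p$ and $\gamma$ so that the series produced by iterating converges and delivers a genuine $L^\infty$ bound rather than blowing up. Both are routine within the Moser framework of \cite{BV,DasKe}; the weight $m$ plays only a passive role thanks to the two-sided bound $\lambda\le m\le\lambda^{-1}$, which is why its influence reduces to harmless multiplicative constants throughout the scheme.
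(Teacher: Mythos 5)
Your proposal follows the same overall strategy as the paper: pass to $u=f/m$, derive the dissipation identities by testing \eqref{eq:u} against powers of $u$ (these are exactly the computation in \eqref{eq:der Lq}), and run a Moser iteration with Sobolev embedding; your reading of the role of $p_+\ge r+3$, $p_+>(r+1)d/2$ and $p_->(r+1)\max(1,d/2)$ is also the correct one. There are, however, two differences of execution worth flagging. First, the paper does not use the $L^\infty_tL^q_x\cap L^2_tH^1_x$ parabolic interpolation you invoke; instead it integrates the dissipation in time, uses the mean value theorem to select a ``good'' intermediate time $\bar t$, and applies the \emph{spatial} Sobolev inequality at that one time, which yields the gain factor $\theta=d/(d-2)$ on exponents rather than $1+2/d$ (both are geometric, so either suffices). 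Second, and more substantively, the paper writes the $\nu=-1$ dissipation as $-C\int_\Omega m\,|\nabla u^{-(q-\sigma)/2}|^2\,u^{-2\sigma}$ with $\sigma=r+1$, which leaves an extra factor $u^{-2\sigma}$; consequently the paper must prove the $L^\infty$ \emph{upper} bound first (Step 2 of Proposition \ref{prop:reg1}) and feed it into the lower-bound iteration to control $u^{-2\sigma}$ from below (Step 3), which is why \eqref{eq:estimate-nu2} involves both $\|f_0\|_{L^q}$ and $\|f_0^{-1}\|_{L^q}$. Your identity
$\tfrac{d}{dt}\int_\Omega m\,u^{-\gamma}=-\tfrac{4\gamma(\gamma+1)r(r+1)}{(\gamma+r+1)^2}\int_\Omega m\,\bigl|\nabla u^{-(\gamma+r+1)/2}\bigr|^2$
absorbs all $u$-powers into the gradient and would let you run the upper- and lower-bound iterations independently, which is cleaner; the price is a different exponent sequence $\gamma_{k+1}=\theta(\gamma_k+\sigma)$ and, because $(\gamma+\sigma)/2>\gamma/2$, a Jensen/Poincar\'e--Wirtinger exponent that points the ``wrong way'' compared to the paper's $\eta/q<1/2$ normalization, so a little extra bookkeeping is needed to check that the multiplicative constants along the iteration stay summable (it works, because $2/(\gamma_k+\sigma)\to 0$ geometrically). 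Both routes reach the Harnack bounds of Theorem \ref{thm:bddness-longtime}.
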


We can then apply the above theorem, together with $H^1$ and $BV$ estimates that will be proven later, in order to obtain the following:

\begin{thm}[Long-time behaviour]\label{thm:bddness-longtime2}
	Let $f$ be a weak solution of \eqref{eq:f} starting from some $f_0$ satisfying the same assumptions as in Theorem \ref{thm:bddness-longtime}. Then, there exist constants $C,c>0$ such that, for all $t\geq0$, one has 
$$
	\left\|f(t) - M\gamma m\right\|_{L^2(\Omega)}  \leq Ce^{-ct}.
$$
Moreover, if one adds the assumption that there exists $\Lambda\in\R$ such that $D^2(\log m) \leq \Lambda\, {\rm Id}$, then there exists another constant $C>0$ so that one also has
$$
	\left\|f(t) - M\gamma m\right\|_{BV(\Omega) } \leq Ce^{-ct} \qquad \text{for $t$ large enough}.
$$
\end{thm}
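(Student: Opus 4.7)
The plan is to exploit Theorem~\ref{thm:bddness-longtime} to pass into a regime where the equation is uniformly parabolic, and then to run an entropy dissipation argument against the functional $\mathcal{F}_\rho$ itself. The $BV$ part will then follow by promoting the resulting $L^2$ decay to a Sobolev/$BV$ decay using the $H^1$ and $BV$ a priori estimates (the latter requiring the semiconcavity of $\log m$).

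After some initial time $t_0 > 0$, Theorem~\ref{thm:bddness-longtime} yields two-sided bounds $c_* \leq f(t,x) \leq C_*$, so that $u := f/m$ is also bounded above and below. Writing $u^* := M\gamma$ and defining the relative entropy
$$
\mathcal{H}[u] := \int_\Omega m\Big(u^{-r} - (u^*)^{-r} + r(u^*)^{-r-1}(u-u^*)\Big)\,dx,
$$
mass conservation $\int_\Omega m u\,dx = M = u^*\int_\Omega m$ kills the linear term, giving $\mathcal{H}[u] = \mathcal{F}_\rho[f] - \mathcal{F}_\rho[M\gamma m]$. Using \eqref{eq:u} and integrating by parts (licit under the periodic or no-flux conditions), a direct computation gives
$$
\frac{d}{dt}\mathcal{H}[u(t)] = -r^2(r+1)^2\int_\Omega m\,u^{-2r-3}|\nabla u|^2\,dx \leq -c_1\int_\Omega m|\nabla u|^2\,dx,
$$
the inequality following from the uniform bounds on $u$. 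The weighted Poincar\'e inequality, applicable thanks to $\int m(u-u^*)\,dx = 0$ and the two-sided bounds on $m$, combined with a Taylor bound $\mathcal{H}[u]\lesssim \int m(u-u^*)^2\,dx$ (the integrand is uniformly convex on the bounded range of $u$), gives $\frac{d}{dt}\mathcal{H}[u(t)] \leq -c\,\mathcal{H}[u(t)]$. Gr\"onwall then yields $\mathcal{H}[u(t)]\leq C e^{-ct}$, and the reverse Taylor bound plus $m\geq \lambda$ turns this into the claimed $L^2$ estimate for $t\geq t_0$, hence for all $t\geq 0$ after adjusting constants.

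For the $BV$ bound, the semiconcavity assumption $D^2(\log m)\leq \Lambda\,\mathrm{Id}$ is precisely what makes the $BV$ estimate proved elsewhere in the paper available, giving a uniform bound on $|Du(t)|(\Omega)$; similarly, the announced $H^1$ estimate provides a uniform $L^2$ bound on $\nabla u(t)$. Since the equation is uniformly parabolic for $t\geq t_0$ (by the Harnack bounds) and the nonlinearity $v\mapsto v^{-r}$ together with its derivatives is bounded on the range of $u$, standard parabolic smoothing propagates the $L^2$ decay into higher Sobolev norms: one obtains $\|u(t)-u^*\|_{H^k}\leq C_k e^{-ct}$ for $t$ large. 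Then $|D(u-u^*)|(\Omega)\leq |\Omega|^{1/2}\|\nabla u\|_{L^2}$ decays exponentially, and a Leibniz computation using $\nabla m\in L^p$ (from the standing assumption $\log m\in W^{1,p}$ with $p>d$) together with $\|u-u^*\|_{L^\infty}\leq Ce^{-ct}$ (from parabolic smoothing of the $L^2$ decay) produces $|D(f-M\gamma m)|(\Omega)\leq Ce^{-ct}$, as desired.

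The main obstacle is the $BV$ step: rigorously transferring the $L^2$ exponential decay into a $BV$ (or equivalently $H^1$) decay along the \emph{nonlinear} equation. The Harnack bounds make the equation essentially linear but the bootstrap from $L^2$ decay to $H^1$ decay still has to be justified carefully using the $H^1$ and $BV$ a priori estimates of the paper. The role of the semiconcavity assumption on $\log m$ is precisely to make the $BV$ estimate available, providing the missing uniform control that closes the argument.
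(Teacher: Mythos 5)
Your $L^2$ argument is essentially a reconstruction of the argument that the paper merely cites from \cite{Iac2}: after the Harnack inequalities, you compute the entropy dissipation, invoke a weighted Poincar\'e inequality, and use a Taylor expansion to pass between the relative entropy and the weighted $L^2$ distance. This is correct and matches the spirit of what the paper does; the paper simply outsources that step to \cite[Section 3]{Iac2}.

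The $BV$ part, however, has a genuine gap, which you yourself flag but do not close. You invoke ``standard parabolic smoothing'' to upgrade the $L^2$ decay to $H^k$ decay (even for arbitrary $k$), arguing that the equation is uniformly parabolic on the range obtained from the Harnack bounds. This cannot be made to work under the paper's standing assumptions: the coefficient $m$ is only assumed to satisfy $\log m \in W^{1,p}(\Omega)$ for some $p>d$, so $\nabla m$ is only in $L^p$ and not Lipschitz, let alone $C^\infty$. Classical parabolic regularity of the type $\|u(t)\|_{H^k} \lesssim (t-s)^{-\alpha}\|u(s)\|_{L^2}$ requires smooth or at least Lipschitz coefficients; with $W^{1,p}$ coefficients one cannot expect $H^k$ bounds for all $k$, and even the $k=1$ case is not a routine citation in this quasilinear, low-regularity setting. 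Your subsequent Leibniz computation for $f=um$ therefore rests on an estimate that has not been established.

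The paper's actual route is quite different and avoids parabolic regularity theory entirely. First, Lemma~\ref{firstBVexp}, built from the discrete JKO-level $BV$ estimate (Lemma~\ref{lem:BV}, which is exactly where the semiconcavity $D^2(\log m)\leq \Lambda\,\mathrm{Id}$ enters), gives an a priori exponential growth bound $\|f(t_1)\|_{BV(\Omega;m)}\leq e^{C_2\Lambda(t_1-t_0)}\|f(t_0)\|_{BV(\Omega;m)}$. Setting $h(t)=\|f(t)\|_{BV(\Omega;m)}^2$, this yields $h(t)\leq C\int_{t-1}^t h(s)\,\mathrm ds$. Second, the flow-interchange/dissipation identity~\eqref{H1estimate-semicont2} with $q=r+3$ gives $\int_{t-1}^t\int_\Omega m|\nabla(f/m)|^2 \lesssim \G_{(r+3)}[f(t-1)]-\G_{(r+3)}[f(t)]$; combined with the comparison $\|\cdot\|_{BV}\lesssim\|\cdot\|_{H^1}$, this bounds $h(t)$ by the entropy gap. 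Finally, a Taylor expansion of $s\mapsto s^{r+3}$ around $M\gamma$ together with $\int_\Omega(u-M\gamma)m=0$ bounds the entropy gap by $\int_\Omega|f-M\gamma m|^2$, which decays exponentially by the first part of the theorem. None of this requires any parabolic regularity of the solution; the role of the semiconcavity of $\log m$ is precisely to make Lemma~\ref{lem:BV} available, not to upgrade $L^2$ to $H^1$ decay by PDE smoothing. If you wish to retain the structure of your argument, you should replace the parabolic-smoothing step with this entropy-dissipation bridge, i.e.\ derive $\int_{t-1}^t\|\nabla u(s)\|_{L^2}^2\,\mathrm ds \lesssim \|u(t-1)-u^*\|_{L^2}^2$ from the dissipation identity and then use the one-step $BV$ propagation to pass from an integral bound in time to a pointwise bound in time.
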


We now briefly explain the ideas behind the proofs of the above results. To prove existence of weak solutions we use the so-called Jordan--Kinderlehrer--Otto (JKO) scheme. This method, first introduced in \cite{JorKinOtt} and further developed in several other papers (see for instance \cite{DiFMatt14} for a related setting), provides a very natural way to discretise Wasserstein gradient flows in time. More precisely, given a time step $\tau>0$, one fixes $f^{(\tau)}_0:=f_0$, and, for each $k\in\mathbb{N}$, one defines  $f^{(\tau)}_{k+1}$ as the minimiser of the functional
$$
f\mapsto \mathcal F_{\rho}[f]+\frac{W_2^2(f,f^{(\tau)}_k)}{2\tau}.
$$
In this way one constructs a discrete gradient flow defined at all times $t=k\tau$ with $k\geq 0$, and to obtain a solution to \eqref{eq:f} one needs to find a limit as $\tau \to 0$. In our case we face at least two main challenges: first, the JKO scheme is naturally set in the class of measures, and we would need to prove that minimisers of the above functional exist in the space of functions, or densities (a priori, the minimiser may have a singular part); second, we need to prove enough estimates on the discrete solutions to ensure that, in the limit, we obtain a weak solution according to Definition \ref{def:weak}. To circumvent these difficulties, we first prove that if the initial datum $f_0$ is bounded between two multiples of $m$, then the same bound is true for $f^{(\tau)}_k$ for all $k\in\mathbb{N}$. In this way we guarantee that $f^{(\tau)}_k$ is a function (and not only a measure). Also, still assuming that $f_0$ is bounded between two multiples of $m$, we exploit the so-called ``flow-interchange technique'' (see \cite{MattMcCSav}) and the so-called ``five-gradients inequality'' (see \cite{DePMesSan}) to find $H^1$ and $BV$ a priori estimates on our discrete solutions. In this way we can prove the existence of a weak solution of \eqref{eq:f} whenever $0<c_0m \leq f_0\leq C_0m$ for some $c_0,C_0>0$. Finally, the general existence theorem follows by approximation. 

To prove uniqueness the idea is to consider two weak solutions $f$ and $g$, and show that 
$$
	t\mapsto \int_\Omega (f(t)-g(t))_+
$$
is descreasing in time. To achieve this we use the equation satisfied by $u:= f/m$ and $v:= g/m$; see \eqref{eq:u} and we prove a \emph{weighted} $L^1$ contraction on $u$ and $v$. Note that we are actually unable to prove directly this property for all solutions: we can prove it only when one of the two solutions is uniformly bounded away from zero; see Proposition \ref{prop:contr}. Then, by an approximation argument, we are able to conclude the desired uniqueness; see Theorem \ref{thm:uniq sol}. Finally, we prove the instantaneous positivity and boundedness from above (i.e., instantaneous regularisation) of weak solutions using a Moser iteration, and then we conclude the $L^2$ exponential convergence relying on the argument in \cite{Iac2}. We are also able to provide $BV$ exponential convergence to the steady state, using the arguments deriving from the discrete $BV$ estimate.

In Section \ref{sec:time-disc} we discretise the problem in time and show existence of minimisers for the JKO scheme, together with a discrete maximum principle; we also give $BV$ and $H^1$ estimates for the minimisers. In Section \ref{sec:existence-uniqueness} we prove Theorem \ref{thm:existence-uniqueness}; as a corollary, we also get a continuous maximum principle. Then, in Section \ref{sec:reg} we show Theorem \ref{thm:bddness-longtime} and in Section \ref{sec:long-time-behaviour} we use our tools to prove exponential convergence, that is, Theorem \ref{thm:bddness-longtime2}. Note that, whenever relevant, we rewrite our results in remarks for the nonweighted case $\rho\equiv1$, which is the prototype equation and helps understand the essential aspects of both the problem and the results.

\section{Time discretisation of the problem}\label{sec:time-disc}

Let us fix in this section the time step $\tau>0$.

In order to study the JKO scheme, we first define our functional $\mathcal F_\rho$ on the space of measures. To this aim, for all $\mu \in \M(\Omega)$ that we can decompose as $\mu=f\di x+\mu^\mathrm{s},\; \mu^\mathrm{s} \perp \di x$,  we define
$$
\mathcal F_{\rho}[\mu]= \int_\Omega \frac{\rho}{f^r}\di x.
$$
Note that, if we set $U(s)=s^{-r}$ for all $s \in (0,\infty)$ we can also define the functional
$$
	\G [\mu]=\int_\Omega U\left(\frac{f}{m}\right)m \di x,
$$
where $m$ is as in \eqref{eq:change f to u}. Of course, we have $\mathcal F_{\rho}[\mu]=\G [\mu]$. More generally, and for future use, for a given exponent $q<0$, and still using the decomposition $\mu=f\di x+\mu^\mathrm{s},\; \mu^\mathrm{s} \perp \di x$, we define 
$$
	\G_{(q)} [\mu]=\int_\Omega \left(\frac{f}{m}\right)^qm \di x,
$$
and we also give a similar, but different definition, for $q>1$:
$$
	\G_{(q)} [\mu]=\begin{cases}\int_\Omega \left(\frac{f}{m}\right)^qm \di x,&\text{if }\mu^\mathrm{s}=0,\\
							+\infty&\text{if not.}\end{cases}
$$
Also note that, when the reference weight $m$ is not fixed (for instance, we will once use a sequence of weights $m_n$), then we can also write $\G_{(q;m)}$ instead of $\G_{(q)}$ to stress the dependence on the weight.

The functional $\G$ previously defined is just an example of functional $\G_{(q)}$, for $q=-r$. 
 We observe that all functionals $\G_{(q)}$  are lower semicontinuous for the weak-* convergence of nonnegative measures because they have the form $\mu\mapsto \int_\Omega U(\di \mu/\di x)\di x+ U'_\infty \int_\Omega \mu^\mathrm{s}$, where $U'_\infty := \lim_{s \to +\infty} U(s)/s$ (and here we have $U'_\infty\in\{0,+\infty\}$); see for instance \cite[Proposition 7.7]{OTAM}. (Note that if $U_\infty'=+\infty$ and $\int_\Omega \mu^\mathrm{s}=0$, then we conventionally set $U_\infty' \int_\Omega \mu^\mathrm{s} = 0$.)

As explained in the introduction, one can construct a discrete gradient flow as an iterative sequence of minimisation problems of the form
$$
	\mu_\tau^k \text{ minimises }\mu\mapsto \mathcal \G[\mu]+\frac{W_2^2(\mu,\mu^{(\tau)}_{k-1})}{2\tau},
$$
for every $k \in \mathbb{N}$. This means that, for a given $\nu \in \M(\Omega)$ with mass $M$, we want to solve
\begin{equation}\label{minGFnu}
	\min\left\{\G[\mu]+\frac{W_2^2(\mu,\nu)}{2\tau}\;:\;\mu\in\mc{M}_M(\Omega)\right\},
\end{equation}
where $\mc{M}_M(\Omega):=\{\mu \in \M(\Omega) \mbox{ s.t.}\, \int_{\Omega}\di\mu=M\}$.

Note that, as a consequence of the definition of the scheme, the mass $M$ of our discrete solutions (and therefore also of their continuous limits) is preserved.

\subsection{Well-posedness of the discrete scheme}

\begin{thm}\label{exiuniqJKO}
	If $\nu \in \M(\Omega)$ with $\nu \ll \di x$, then there exists a unique minimiser for Problem \eqref{minGFnu}
\end{thm}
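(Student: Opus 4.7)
My plan is to combine the direct method of the calculus of variations (for existence) with a strict-convexity argument supplemented by an absolute-continuity step (for uniqueness).

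For existence, I first check the infimum is finite: the constant density $\mu_\star := (M/|\Omega|)\,\di x$ yields $\G[\mu_\star] = (M/|\Omega|)^{-r}\int_\Omega m^{r+1}\,\di x$, which is finite by the standing Sobolev assumption on $m$ (ensuring $m$ is bounded above and below by positive constants), and $W_2^2(\mu_\star,\nu)\leq M\,\mathrm{diam}(\Omega)^2 < \infty$. I then take any minimising sequence $(\mu_n)\subset \mc{M}_M(\Omega)$: a fixed total mass $M$ on the bounded set $\Omega$ gives automatic tightness, so Prokhorov extracts a weakly-$*$ convergent subsequence with limit $\mu^\star\in\mc{M}_M(\Omega)$. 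Both $\G$ (by the general lower semicontinuity result the paper recalls just above the theorem, applied to $U(s)=s^{-r}$ with $U'_\infty=0$, via \cite[Proposition 7.7]{OTAM}) and $\mu\mapsto W_2^2(\mu,\nu)$ (by classical optimal transport theory) are weakly-$*$ lower semicontinuous, so $\mu^\star$ attains the infimum.

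For uniqueness, I rely on the strict convexity (in the ordinary linear sense) of $U(s)=s^{-r}$, which holds since $U''(s) = r(r+1) s^{-r-2} > 0$, together with the convexity of $\mu\mapsto W_2^2(\mu,\nu)$ under linear interpolation of measures (convex combinations of transport plans to $\nu$ are admissible for the convex combination of marginals). If $\mu_0,\mu_1\in\mc{M}_M(\Omega)$ are two minimisers with decompositions $\mu_i = f_i\,\di x + \mu_i^\mathrm{s}$, then $\mu_{1/2} := (\mu_0+\mu_1)/2$ is also a minimiser, and since $U'_\infty = 0$ the equality case in Jensen's inequality applied inside $\G[\mu_{1/2}] = \int_\Omega U((f_0+f_1)/(2m))\,m\,\di x$ forces $f_0 = f_1$ almost everywhere.

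What remains---and this is the main obstacle---is to show $\mu_0^\mathrm{s} = \mu_1^\mathrm{s}$. My plan is to establish a priori that every minimiser is absolutely continuous. Given a putative minimiser $\mu = f\,\di x + \mu^\mathrm{s}$ with $\epsilon := \mu^\mathrm{s}(\Omega) > 0$, consider the competitor $\tilde\mu := f\,\di x + h\,\di x$, where $h\geq 0$ is a suitable redistribution with $\int_\Omega h\,\di x = \epsilon$. Since $U'_\infty = 0$ the singular part contributes nothing to $\G$, while $U$ being strictly decreasing yields $\G[\tilde\mu] < \G[\mu]$; meanwhile $W_2^2(\tilde\mu,\nu)\leq W_2^2(\mu,\nu) + C \epsilon$ for some $C$ depending on $\mathrm{diam}(\Omega)$. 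To guarantee the $\G$-decrease dominates the $W_2^2$-increase for arbitrary $\tau>0$, $h$ should be concentrated where $f$ is small (noting $\G[\mu]<\infty$ forces $f>0$ a.e.\ but allows $f$ to be arbitrarily small on sets of small measure, making $|U'(f/m)|=r(f/m)^{-r-1}$ very large there), which makes the first-order decrease of $\G$ dominate the bounded $W_2^2$ change, contradicting minimality. Combined with the previous step, this yields $\mu_0 = \mu_1$.
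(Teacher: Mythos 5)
Your existence argument is correct and is essentially the paper's: tightness gives compactness, and both $\G$ and $W_2^2(\cdot,\nu)$ are weakly-$*$ lower semicontinuous. The real issue is the uniqueness step, where you depart from the paper and leave a genuine gap.

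You split uniqueness into two claims: (i) $f_0=f_1$ via strict convexity of $U(s)=s^{-r}$, and (ii) $\mu_0^{\mathrm{s}}=\mu_1^{\mathrm{s}}$ via an \emph{a priori} absolute-continuity result for minimisers. Step (i) is fine, but step (ii) is not established; it is a plan, and the plan does not work as described. Since $U'_\infty=0$, the functional $\G$ is completely blind to the singular part, so any estimate of the form ``redistributing $\mu^{\mathrm{s}}$ decreases $\G$ by a controllable amount while increasing $W_2^2$ by at most $C\epsilon$'' requires the $\G$-decrease to dominate for every $\tau>0$. If the density $f$ of the putative minimiser happens to be bounded away from zero (which your argument cannot exclude, and which is exactly the regime of Lemma~\ref{lem:max}), then $|U'(\xi/m)|$ is bounded by a fixed constant on any competitor, so the $\G$-decrease is at most $|U'(c_0)|\epsilon$ and cannot overwhelm the $W_2^2$-increase for small $\tau$. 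Moreover, the paper's own remark immediately after Theorem~\ref{exiuniqJKO} explicitly notes that the theorem does \emph{not} preclude a minimiser with nontrivial singular part; absolute continuity is only established later (Lemma~\ref{lem:max}) under the extra hypothesis $\nu\le C_0 m$. So the route ``prove a.c.\ first, then uniqueness'' cannot be carried out at this level of generality.

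The paper's argument is both simpler and complete: it does not rely on strict convexity of $\G$ at all (mere convexity suffices there), but instead invokes the fact that $\mu\mapsto W_2^2(\mu,\nu)$ is \emph{strictly} convex on the full space of measures whenever $\nu\ll\di x$ (\cite[Proposition~7.19]{OTAM}). This strict convexity ``sees'' the singular part, so if $\mu_0\ne\mu_1$ even in their singular components, the midpoint $\mu_{1/2}$ strictly decreases $W_2^2$, while $\G$ does not increase, contradicting minimality. This one observation handles both pieces of your decomposition simultaneously, and is in fact the only place where the hypothesis $\nu\ll\di x$ is used in the paper's proof. I suggest you replace your two-step uniqueness argument with this single appeal to strict convexity of the Wasserstein term.
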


\begin{proof}
	The functional $\G$ is lower semicontinuous for the weak-* convergence of measures, and so is $\mu\mapsto W_2^2(\mu,\nu)$, since $W_2$ exactly metrises (on compact sets) this convergence. Moreover, the set $\M(\Omega)$ is compact for this convergence, which proves the existence of a minimiser. Uniqueness comes from the strict convexity of the problem. Indeed, $\G$ is a convex functional and so is $\mu\mapsto W_2^2(\mu,\nu)$. In addition, the latter is also strictly convex if $\nu\ll \di x$; see \cite[Proposition 7.19]{OTAM}.
\end{proof}

Theorem \ref{exiuniqJKO} does not exclude the possibility of a minimiser, say $\mu_*$, which is not absolutely continuous: its singular part $\mu_*^\mathrm{s}$ does not enter into play in the computation of $\G [\mu_*]$ but is not forbidden as soon as the absolutely continuous part of $\mu_*$ is positive almost everywhere. In order to study the minimisers for Problem \eqref{minGFnu} we approximate $U$ with a superlinear cost function to ease the computations. Define, for all $\eps >0$,
$$
	U_\eps(s):=s^{-r}+\eps \frac{s^2}{2} \quad \mbox{ for all } s \in (0,\infty),
$$
and, for all $\mu \in \M(\Omega)$,
$$
	\G_\e[\mu] = \begin{cases}\int_\Omega U_\eps\left(\frac{f}{m}\right)m& \mbox{ if }\mu= f \di x\mbox{ with } f \in L^1(\Omega),\\
						+\infty&\mbox{ if not.}\end{cases}
$$
We use the following result, which is essentially a statement on $\Gamma$-convergence; see for instance \cite{DalMaso}:
\begin{lem}\label{approxUeps}
	Suppose $\nu \in \M(\Omega)$ with $\nu\ll \di x$ and $\int_\Omega d\nu=M$. Given $(\nu_\eps)_\eps$ weakly-* converging to $\nu$, set $M_\eps:=\int_\Omega \di \nu_\eps$ (in particular, we have $M_\eps\to M$ as $\eps\to0$); then, the problem 
\be\label{eq:min-approx}
	\min\left\{\G_\eps[\mu]+\frac{W_2^2(\mu,\nu_\eps)}{2\tau}\; : \;\mu\in\mc{M}_{M_\eps}(\Omega)\right\}
\ee
admits a unique solution $\mu_\eps$ for every $\eps>0$. This solution is absolutely continuous for every $\eps>0$, and weakly-* converges to the unique solution $\mu_*$ of \eqref{minGFnu} as $\eps \to 0$.
\end{lem}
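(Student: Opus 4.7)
This is a $\Gamma$-convergence--type stability result for the JKO-style minimisation under the strictly convex perturbation $\eps s^2/2$.

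For each fixed $\eps>0$, existence and uniqueness of $\mu_\eps$ follow the template of Theorem \ref{exiuniqJKO}: $\mc{M}_{M_\eps}(\Omega)$ is weak-$*$ compact (because $\Omega$ is bounded) and $\G_\eps+W_2^2(\cdot,\nu_\eps)/(2\tau)$ is weak-$*$ lower semicontinuous on it. The key new feature compared with Theorem \ref{exiuniqJKO} is that $U_\eps$ is superlinear at infinity, so $U'_{\eps,\infty}=+\infty$ and $\G_\eps$ takes the value $+\infty$ on measures with a nontrivial singular part, forcing any minimiser to be absolutely continuous. Moreover $U_\eps$ is strictly convex (both $s^{-r}$ and $\eps s^2/2$ are), so $\G_\eps$ is strictly convex on absolutely continuous measures; uniqueness follows at once, with no assumption on $\nu_\eps$ (in contrast with Theorem \ref{exiuniqJKO}, where $\nu\ll\di x$ was used to get strict convexity of $W_2^2$ itself).

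For the convergence $\mu_\eps\destar\mu_*$, since $(\mu_\eps)_\eps$ has masses $M_\eps\to M$ on a bounded domain, it is weak-$*$ precompact, and along a subsequence $\mu_\eps\destar\mu_\infty\in\mc{M}_M(\Omega)$. I would then establish the chain
\bes
\G[\mu_\infty]+\frac{W_2^2(\mu_\infty,\nu)}{2\tau}\leq\liminf_{\eps\to 0}\Bigl(\G_\eps[\mu_\eps]+\frac{W_2^2(\mu_\eps,\nu_\eps)}{2\tau}\Bigr)\leq\limsup_{\eps\to 0}(\cdots)\leq\G[\mu_*]+\frac{W_2^2(\mu_*,\nu)}{2\tau},
\ees
from which the uniqueness part of Theorem \ref{exiuniqJKO} forces $\mu_\infty=\mu_*$ and yields convergence of the full family (since the limit is independent of the extracted subsequence). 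The leftmost inequality is routine: $\G_\eps\geq\G$ (the perturbation is nonnegative), $\G$ is weak-$*$ lower semicontinuous, and $W_2$ metrises weak-$*$ convergence on mass-converging subsets of $\M(\Omega)$, so $W_2^2(\mu_\eps,\nu_\eps)\to W_2^2(\mu_\infty,\nu)$. The rightmost inequality is obtained by testing the optimality of $\mu_\eps$ against a competitor built from $\mu_*$.

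The hard part is constructing this recovery competitor $\tilde\mu_\eps\in\mc{M}_{M_\eps}(\Omega)$, absolutely continuous, such that $\tilde\mu_\eps\destar\mu_*$, $\G[\tilde\mu_\eps]\to\G[\mu_*]$, $W_2^2(\tilde\mu_\eps,\nu_\eps)\to W_2^2(\mu_*,\nu)$, and in addition the extra smallness
\bes
\eps\int_\Omega\Bigl(\frac{\tilde f_\eps}{m}\Bigr)^{2}m\,\di x\to 0.
\ees
If $\mu_*$ were absolutely continuous with bounded density, the rescaling $(M_\eps/M)\mu_*$ would work immediately. In general, writing $\mu_*=f_*\di x+\mu_*^{\rm s}$, I would build $\tilde\mu_\eps$ by (i)~truncating $f_*$ at a level $K_\eps\to\infty$, (ii)~replacing the singular part by a uniform density on small balls of radius $\eta_\eps\to 0$ around (a partition of) its support, and (iii)~correcting the residual mass defect by a small uniform shift so as to reach total mass $M_\eps$. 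Dominated convergence applied to $s\mapsto s^{-r}$ handles the truncation step, while the elementary identity $(c/|B(x_0,\eta)|)^{-r}|B(x_0,\eta)|\to 0$ as $\eta\to 0$ shows that the spread singular mass contributes zero to $\G$ in the limit. A diagonal choice with $\eps K_\eps\to 0$ and $\eps/\eta_\eps^d\to 0$ then kills the quadratic penalty. The main obstacle is precisely this step: reconciling three convergences at once (in $\G$, in $W_2$, and in the quadratic perturbation) requires a delicate balancing between the truncation level, the spreading radius, and $\eps$, and must be carried out without any a~priori regularity of $\mu_*$.
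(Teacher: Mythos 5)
Your overall strategy (liminf/limsup sandwich, identification of the subsequential limit with $\mu_*$, a density/recovery argument) is the right skeleton, and the existence/uniqueness/absolute-continuity discussion for fixed $\eps$ matches the paper exactly. But you have made the last step harder than it is, because you insist on a single $\eps$-dependent recovery competitor $\tilde\mu_\eps$ that approximates $\mu_*$ and simultaneously kills the quadratic penalty. This is the source of the ``delicate balancing'' you worry about, and it is avoidable.

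The paper decouples the argument into two pieces that you should separate as well. First, test the optimality of $\mu_\eps$ against an arbitrary but \emph{fixed} competitor $\tilde\mu$ with $L^2$ density (independent of $\eps$): then $\G_\eps[\tilde\mu]=\G[\tilde\mu]+\frac{\eps}{2}\int(\tilde f/m)^2 m$ and the extra term is just $\eps$ times a finite constant, so $\G_\eps[\tilde\mu]\to\G[\tilde\mu]$ trivially, with no balancing whatsoever. Passing to the liminf in
\bes
\G[\mu_\eps]+\frac{W_2^2(\mu_\eps,\nu_\eps)}{2\tau}\leq \G_\eps[\tilde\mu]+\frac{W_2^2(\tilde\mu,\nu_\eps)}{2\tau}
\ees
shows that the cluster point $\mu_\infty$ minimises $\G[\cdot]+W_2^2(\cdot,\nu)/(2\tau)$ over $L^2$ competitors. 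Second, prove the purely static density fact that, in the \emph{unperturbed} problem \eqref{minGFnu}, restricting to $L^2$ (or even bounded) competitors does not change the infimum; this is done by approximating an arbitrary $\mu=f\di x+\mu^{\rm s}$ by $\mu_p=(c_p+(f\wedge p)+f_p)\di x$, with $f_p$ a bounded approximation of $\mu^{\rm s}$ and $c_p$ a mass-correcting constant. Here $\eps$ plays no role at all, so you only need the two conditions $\limsup_p\G[\mu_p]\le\G[\mu]$ and $W_2^2(\mu_p,\nu)\to W_2^2(\mu,\nu)$ --- two, not three. Your proposed truncate-spread-correct construction is morally the same as the paper's $\mu_p$, so this part is not wrong; the issue is only that you have entangled it with the $\eps\to0$ limit, where it does not belong. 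Once you decouple, the argument closes cleanly with no diagonal choice of $K_\eps,\eta_\eps$ versus $\eps$.
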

\begin{proof}
	Given $\eps>0$, the existence and uniqueness of $\mu_\eps$ can be done as in the proof of Theorem \ref{exiuniqJKO}. (Uniqueness is actually easier since the functional $\G_\eps$ is strictly convex, so that we do not need the strict convexity of the Wasserstein part and we do not need $\nu_\eps\ll \di x$.) The fact that $\mu_\eps$ is absolutely continuous is straightforward, since otherwise $\G_\eps[\mu_\eps]=+\infty$. Up to subsequences, we can always suppose $\mu_\eps\destar\mu_*$ as $\eps\to0$ for some $\mu_* \in \M_M(\Omega)$; indeed, $\mu_*(\Omega) = \nu(\Omega)$ since the weak-* convergence preserves in this case the total mass \cite{Bill}. We now just need to prove that $\mu_*$ solves \eqref{minGFnu}. Given an arbitrary measure $\tilde\mu$ with an $L^2$ density, we can write
$$
	\G[\mu_\eps]+\frac{W_2^2(\mu_\eps,\nu_\eps)}{2\tau}\leq \G_\eps[\mu_\eps]+\frac{W_2^2(\mu_\eps,\nu_\eps)}{2\tau}\leq \G_\eps[\tilde\mu]+\frac{W_2^2(\tilde\mu,\nu_\eps)}{2\tau}.
$$
Passing to the liminf as $\eps\to 0$, using the semicontinuity of $\G$, the continuity of $W_2$ with respect to the weak-* convergence and the fact that we have $\G_\eps[\tilde\mu]\to \G[\tilde\mu]$, which is true for every $\tilde\mu\in L^2$ (since the extra term in the definition of $\G_\eps$ is a finite term multiplied by $\eps$), we get
$$
	\G[\mu_*]+\frac{W_2^2(\mu,\nu)}{2\tau}\leq \G[\tilde\mu]+\frac{W_2^2(\tilde\mu,\nu)}{2\tau}.
$$
This shows that $\mu_*$ is a minimiser in  \eqref{minGFnu} if we restrict to $L^2$ competitors.

To complete the proof, it is enough to prove that the infimum in  \eqref{minGFnu} does not change if we restrict it to $L^2$, or, in fact, even to bounded, densities. To do so, take an arbitrary $\mu=f\di x+\mu^\mathrm{s}$ and define, for all $p>0$, $\mu_p:=\(c_p+(f\wedge p)+f_p\)\di x$, where $f \wedge p$ stands for the minimum between $f$ and $p$, $f_p\destar\mu^\mathrm{s}$ as $p\to\infty$ is an arbitrary $L^\infty$ approximation of $\mu^\mathrm{s}$, and $c_p=|\Omega|^{-1}\int_\Omega (f-f\wedge p)\di x$ is a constant density with the same mass as the difference between $f$ and its truncation $f\wedge p$. We can see that, as $p\to\infty$, we have $\mu_p\destar\mu$; hence $\lim_{p\to\infty} W_2^2(\mu_p,\nu)=W_2^2(\mu,\nu)$, and we also have $\limsup_{p\to\infty} \G[\mu_p]\leq \G[\mu]$.   
\end{proof}

As sometimes done already, in the sequel we will often identify absolutely continuous measures with their densities.

\subsection{Discrete maximum principle}

\begin{lem}\label{lem:V-gen}
	Given $g \in \M(\Omega)\cap L^1(\Omega)$ and a convex lower semicontinuous function $V\colon [0,\infty)\to \R\cup \{+\infty\}$ so that $V(0)=+\infty$, $\lim_{s \to +\infty}V(s) =+\infty$ and $V$ is of class $C^1$ on $(0,\infty)$, let $f_*$ minimise the functional
$$
	f \mapsto \int_\Omega  V\(\frac{f}{m}\) m +\frac{W_2^2(f,g)}{2\tau}
$$
on $L^1(\Omega)$. Then, there exists a constant $C>0$ such that
$$
V'\(\frac{f_*}{m}\)+\frac{\varphi}{\tau}=C\quad \text{ almost everywhere on } \Omega,
$$
where $\varphi$ is the unique (up to additive constants) Kantorovich potential from $f_*$ to $g$.
\end{lem}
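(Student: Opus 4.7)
The plan is to derive the stated identity as the first-order optimality condition for $f_*$, by computing the first variations of the two terms in the functional under mass-preserving perturbations.

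First I would observe that $f_*>0$ almost everywhere on $\Omega$. Indeed, since $V(0)=+\infty$ and $m\geq\lambda>0$ by the standing assumption, the functional at $f_*$ would be $+\infty$ on any set of positive measure where $f_*=0$; but the infimum is finite (it is achieved by some bounded positive competitor, as in Lemma \ref{approxUeps}), forcing $f_*/m>0$ almost everywhere. In particular the Kantorovich potential $\varphi$ from $f_*$ to $g$ is well-defined and unique up to an additive constant, since $f_*$ is absolutely continuous.

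Next, I would set up admissible variations. Fix $\delta>0$, and let $h_1,h_2\in L^\infty(\Omega)$ be nonnegative, supported in $\{f_*\geq\delta\}\cap\{f_*\leq\delta^{-1}\}$, and satisfy $\int_\Omega h_1=\int_\Omega h_2$. Setting $h:=h_1-h_2$, the perturbation $f_\e:=f_*+\e h$ is, for $|\e|$ small, a nonnegative density of mass $M$, with $f_\e/m$ uniformly bounded above and away from zero on the support of $h$. This uniform control is what I need to legitimately differentiate under the integral in the singular energy term: by dominated convergence, using continuity of $V'$ on $(0,\infty)$,
\begin{equation*}
\frac{d}{d\e}\bigg|_{\e=0}\int_\Omega V\!\Big(\frac{f_\e}{m}\Big)\,m\,\di x = \int_\Omega V'\!\Big(\frac{f_*}{m}\Big)\,h\,\di x.
\end{equation*}
For the Wasserstein term, I would invoke the classical first-variation formula for $W_2^2$ with respect to perturbations of the first marginal (see \cite[Proposition~7.17]{OTAM}), which gives
\begin{equation*}
\frac{d}{d\e}\bigg|_{\e=0}\frac{W_2^2(f_\e,g)}{2\tau}=\int_\Omega \frac{\varphi}{\tau}\,h\,\di x.
\end{equation*}

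Combining these two computations with the minimality of $f_*$ yields
\begin{equation*}
\int_\Omega \bigg[V'\!\Big(\frac{f_*}{m}\Big)+\frac{\varphi}{\tau}\bigg](h_1-h_2)\,\di x=0
\end{equation*}
for every admissible pair $(h_1,h_2)$ as above. Since $h_1,h_2$ range over arbitrary bounded nonnegative functions on $\{\delta\leq f_*\leq\delta^{-1}\}$ of equal mass, this forces $V'(f_*/m)+\varphi/\tau$ to be constant almost everywhere on $\{\delta\leq f_*\leq\delta^{-1}\}$. Letting $\delta\to 0$, and using that $f_*>0$ a.e. so that these sets exhaust $\Omega$ up to a null set, I conclude that $V'(f_*/m)+\varphi/\tau$ is constant a.e. on $\Omega$, giving the claim.

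The main obstacle is the justification of the first-variation formula for the Wasserstein term together with the uniqueness (up to constants) of $\varphi$: this is what requires $f_*$ to be absolutely continuous and strictly positive, which is why the preliminary argument that $f_*>0$ a.e. is essential. The rest is standard provided one is careful to keep perturbations supported away from $\{f_*=0\}$ and $\{f_*=+\infty\}$ so that differentiation under the integral is legitimate for the possibly singular convex function $V$.
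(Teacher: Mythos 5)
Your proof is correct and follows essentially the same route as the paper: the paper first deduces $f_*>0$ a.e. from $V(0)=+\infty$ and finiteness of the energy, and then directly invokes \cite[Proposition~7.20]{OTAM} (the ready-made optimality-condition statement for JKO-type problems), whereas you unroll that proposition by hand, using the first-variation formula for $W_2^2$ from \cite[Proposition~7.17]{OTAM} together with truncated mass-preserving perturbations supported in $\{\delta\le f_*\le\delta^{-1}\}$ to justify differentiation under the integral of the singular term $V$. The content is the same; your version makes explicit where the positivity of $f_*$ (hence uniqueness of the Kantorovich potential and two-sided differentiability of the Wasserstein term) and the localisation of perturbations are actually used.
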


For the readers' convenience, we recall that the definition and role of Kantorovich potentials. First, we recall the duality result introduced by Kantorovich, which reads, in the case of the quadratic cost $c(x,y)=|x-y|^2/2$
$$ \inf_{\pi \in \Pi(\mu,\nu)}  \int_{\Omega \times \Omega} \frac12 |x-y|^2 \di \pi(x,y) =\sup_{\varphi,\psi\,:\,\varphi(x)+\psi(y)\leq  \frac12 |x-y|^2}\int \varphi\di \mu+\int\psi\di\nu.$$
When $(\varphi,\psi)$ is an optimal pair in the above supremum, then we say that $\varphi$ is a Kantorovich potential from $\mu$ to $\nu$. The Kantorovich potential is always a Lipschitz (when $\Omega$ is compact) and semiconcave function, and is unique up to additive constants as soon as $\mu$ has strictly positive density almost everywhere. Moreover, it is connected to the optimal transport map $T$ via $T(x)=x-\nabla\varphi(x)$, for almost every $x \in\Omega$, and it also plays the role of first variation of the functional $\mu\mapsto \frac12 W_2^2(\mu,\nu)$. By inverting the roles of the two measures and using the uniqueness of the optimal map, it is easy to obtain $\nabla\psi=-\nabla\varphi\circ T$. We refer the reader to \cite[Sections 1.2, 1.3 and 7.2.2]{OTAM} for these facts and more details.

\begin{proof}
	From $V(0)=+\infty$ and from the finiteness of $\int_\Omega  V\(f_*/m\) m $ we deduce that $f>0$ almost everywhere. Hence, we can use \cite[Proposition 7.20]{OTAM} to deduce that $V'\(f_*/m\)+\varphi/\tau$ is equal to a constant almost everywhere on the support of $f$, i.e., the domain $\Omega$.
\end{proof}

\begin{lem}[Discrete maximum principle] \label{lem:max}
	Given $\nu \in \M(\Omega)$ with $\nu\ll \di x$, let $\mu_*$ be the unique minimiser for \eqref{minGFnu}. Then, for any $c_0, C_0>0$,
\begin{itemize}
\item if $\nu\leq C_0 m$, then $\mu_*$ is absolutely continuous with density $f_* \leq C_0m$;
\item if $\nu\geq c_0 m$, then $\mu_* \geq c_0m$.
\end{itemize}
 \end{lem}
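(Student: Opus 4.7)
My plan is to apply the approximation from Lemma \ref{approxUeps}: fix $\eps>0$, take $\nu_\eps=\nu$ (trivially convergent to $\nu$), and let $f_\eps$ be the unique absolutely continuous minimiser of \eqref{eq:min-approx}. Since $U_\eps(s)=s^{-r}+\eps s^2/2$ satisfies $U_\eps(0)=+\infty$, $\lim_{s\to+\infty}U_\eps(s)=+\infty$, and $U_\eps\in C^1((0,\infty))$, Lemma \ref{lem:V-gen} (applied with $V=U_\eps$) provides a constant $C_\eps$ and a Kantorovich potential $\varphi_\eps$ from $f_\eps$ to $\nu$ such that
\begin{equation*}
U_\eps'(f_\eps/m)+\varphi_\eps/\tau=C_\eps\quad\text{a.e.\ on }\Omega.
\end{equation*}
Because $U_\eps'(s)=-rs^{-r-1}+\eps s$ is a strictly increasing bijection $(0,\infty)\to\R$, I may invert to obtain $f_\eps/m=(U_\eps')^{-1}(C_\eps-\varphi_\eps/\tau)$, which is a strictly decreasing function of $\varphi_\eps$.

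Since $\varphi_\eps$ is semiconcave and hence continuous on the compact set $\overline\Omega$, it attains its minimum at some $\bar x\in\overline\Omega$, and correspondingly $f_\eps/m$ attains its essential supremum at $\bar x$. To control this supremum, I would invoke the Monge--Amp\`ere identity for the optimal transport map $T_\eps(x)=x-\nabla\varphi_\eps(x)$, namely
\begin{equation*}
f_\eps(x)=\nu(T_\eps(x))\det(I-D^2\varphi_\eps(x)),
\end{equation*}
valid almost everywhere in the Aleksandrov sense. When $\bar x$ lies in the interior of $\Omega$, one has $\nabla\varphi_\eps(\bar x)=0$ and $D^2\varphi_\eps(\bar x)\geq 0$, so $\det(I-D^2\varphi_\eps(\bar x))\leq 1$ and therefore $f_\eps(\bar x)\leq\nu(\bar x)\leq C_0 m(\bar x)$; since $\bar x$ maximises $f_\eps/m$, this forces $f_\eps\leq C_0 m$ throughout $\Omega$. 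The symmetric argument at the maximum $\tilde x$ of $\varphi_\eps$, where $D^2\varphi_\eps(\tilde x)\leq 0$ and $\det DT_\eps(\tilde x)\geq 1$, yields $f_\eps(\tilde x)\geq\nu(\tilde x)\geq c_0 m(\tilde x)$ and hence $f_\eps\geq c_0 m$ everywhere. Passing to the limit $\eps\to 0$ via Lemma \ref{approxUeps} then preserves these linear inequalities under weak-* convergence of measures (they can be tested against arbitrary nonnegative $\psi\in C(\overline\Omega)$); the upper bound $\mu_*\leq C_0 m\,\di x$ moreover forces $\mu_*$ to have no singular part, giving both the absolute continuity claim and the pointwise bounds on $f_*$.

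The main obstacle I anticipate is the case when the extremum of $\varphi_\eps$ lies on $\partial\Omega$ in the bounded convex domain setting: there $\nabla\varphi_\eps$ need not vanish, so the pointwise Monge--Amp\`ere argument at $\bar x$ (or $\tilde x$) must be carried out with extra care. Here the convexity of $\Omega$ plays a decisive role, since it guarantees $T_\eps(\overline\Omega)\subseteq\overline\Omega$, and combined with the semiconcavity of $\varphi_\eps$ (allowing one to approximate $\bar x$ by nearby interior Aleksandrov points, or equivalently to read the second-order information of $\varphi_\eps$ at $\bar x$ from one side) one still recovers the one-sided inequalities $\det DT_\eps\leq 1$ (resp.\ $\geq 1$) needed to close the argument.
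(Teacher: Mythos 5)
Your skeleton is the same as the paper's: use the $\eps$-regularisation of Lemma~\ref{approxUeps}, write the optimality condition via Lemma~\ref{lem:V-gen}, exploit the strict monotonicity of $U_\eps'$ to see that an extremum of $\varphi_\eps$ is an extremum of $f_\eps/m$, invoke the Monge--Amp\`ere equation at that extremum, deal with the boundary using convexity, and pass to the limit. There are, however, two places where your argument has a genuine gap compared with what is actually needed.

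\textbf{Regularity at the extremum.} You take $\nu_\eps=\nu$ and then try to use the Monge--Amp\`ere identity at the point $\bar x$ where $\varphi_\eps$ attains its minimum, in the Aleksandrov (a.e.) sense. This does not close: the set of Aleksandrov points has full measure but there is no reason $\bar x$ itself is one of them. Your proposed fix --- approximate $\bar x$ by nearby interior Aleksandrov points --- also fails, because being a minimiser of $\varphi_\eps$ carries no one-sided second-order information to \emph{nearby} points; you cannot conclude $D^2\varphi_\eps(x_n)\geq 0$ at Aleksandrov points $x_n\to\bar x$. The paper sidesteps this by deliberately taking $\nu_\eps$ to be a \emph{smooth, strictly positive} approximation of $\nu$ (not $\nu$ itself): then both marginals are Lipschitz densities bounded away from $0$, so one can invoke Caffarelli's regularity (or Cordero-Erausquin on the torus) to conclude $\varphi_\eps\in C^{2,\beta}$, and the pointwise second-order test at $\bar x$ becomes classical. (On a convex bounded domain this requires first passing through a uniformly convex, smooth approximation of $\Omega$ to get boundary regularity, and removing that assumption only at the very end.) Note that before Caffarelli's theorem can even be applied, the paper uses the optimality condition together with $U_\eps''\geq\eps>0$ and $U_\eps'(0^+)=-\infty$ to show $f_\eps/m$ is Lipschitz and bounded away from $0$; this preliminary step is essential and is missing from your write-up.

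\textbf{Boundary case for the lower bound.} You flag the boundary case as a concern but resolve it with a one-line appeal to $T_\eps(\overline\Omega)\subseteq\overline\Omega$ and semiconcavity. That is not enough. For the \emph{upper} bound the paper gets an immediate contradiction: at a boundary minimum of $\varphi_\eps$, $\nabla\varphi_\eps(\bar x)$ points inward ($\nabla\varphi_\eps(\bar x)\cdot n(\bar x)\leq 0$), whereas $T_\eps(\bar x)=\bar x-\nabla\varphi_\eps(\bar x)\in\overline\Omega$ together with strict convexity forces $\nabla\varphi_\eps(\bar x)\cdot n(\bar x)>0$. For the \emph{lower} bound (maximum of $\varphi_\eps$) there is no such contradiction, and the paper needs a genuinely different geometric argument: using cyclical monotonicity of $T_\eps$, it shows that the image $y=T_\eps(\bar x)\in\partial\Omega$ satisfies $n(y)\cdot n(\bar x)\geq 0$, and combines this with $\nabla\varphi_\eps(\bar x)=t\,n(\bar x)$, $t\geq 0$, and $(\bar x-y)\cdot n(y)\leq 0$ to force $\nabla\varphi_\eps(\bar x)=0$ even though $\bar x$ may lie on $\partial\Omega$. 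This is the nontrivial part of the boundary analysis; you would need to supply it explicitly rather than assert that ``one still recovers the one-sided inequalities.''

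Once these two points are repaired, the structure you outlined (passing to the limit $\eps\to 0$ via Lemma~\ref{approxUeps}, and noting the linear inequalities $\mu_*\leq C_0 m\,\di x$ and $\mu_*\geq c_0 m\,\di x$ are stable under weak-* convergence, with the former forcing absolute continuity) is correct and matches the paper's.
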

 
\begin{proof}
	We prove the same estimates for the minimisation problem \eqref{eq:min-approx}, where $(\nu_\e)_\e$ is a smooth and strictly positive approximation of $\nu$ also satisfying the bound $\nu_\eps\leq C_0 m$ or the bound $\nu_\eps\geq c_0 m$ for all $\eps>0$. Then, by Lemma \ref{approxUeps}, as the constants $c_0$ and $C_0$ do not depend on $\eps$, the same estimates hold true for the minimiser of \eqref{minGFnu}. Also, we prove the result under the assumption that $m$ be Lipschitz continuous; then, a simple approximation argument gives the result for any $m$. 
	
	Let $\eps>0$ and let $\mu_\eps$ be the unique minimiser of Problem \eqref{eq:min-approx}. Let $\varphi$ be the Kantorovich potential from $\mu_\eps$ to $\nu_\e$. By Lemma \ref{approxUeps}, $\mu_\eps$ is absolutely continuous ($\mu_\eps = f_\eps \di x$), and, by Lemma \ref{lem:V-gen} applied with $V=U_\eps$, we obtain that
 $$
 U'_\eps\(\frac{f_\eps}{m}\)+\frac{\varphi}{\tau}=C \quad \text { almost everywhere on } \Omega.
 $$
Since $\varphi$ is at least Lipschitz continuous (see for instance \cite{OTAM}), this implies that $U'_\eps\(f_\eps/m\)$ is as well Lipschitz continuous. Using the explicit expression for $U_\eps$ and $U_\eps''\geq \eps>0$, we get that $f_\eps/m$ is Lipschitz continuous, and so the same is true of $f_\eps$ (by the assumption that $m$ is Lipschitz continuous). Moreover, $f_\eps$ is bounded from below by a positive constant since $U'_\eps\(f_\eps/m\)$ is bounded and $U'(0)=U_\eps'(0)=-\infty$. Since the target measure $\nu_\eps$ is supposed to be smooth and strictly positive, we face an optimal transport problem between two Lipschitz densities which are bounded below and either periodic (if $\Omega=\bb T^d$) or supported on a convex domain. In the former case we can apply the regularity result in \cite{Cor99} and in the latter case we can apply Caffarelli's regularity theory (see \cite{caf1,caf3,caf2}, \cite[Theorem 3.3]{DePFig survey} and \cite[Theorem 4.23 and Remark 4.25]{Figbook}) to get that $\varphi\in C^{2,\beta}(\Omega)$ for some $\beta<1$, under the extra assumption that $\Omega$ be uniformly convex and smooth, so that we have regularity of $T$ up to the boundary. Note that we get rid of the extra assumption on $\Omega$ at the end of the proof. Moreover, the optimal map $T={\rm id}-\nabla\varphi$ is a diffeomorphism and sends $\partial\Omega$ into $\partial\Omega$, which is only pertinent in the case where $\Omega$ is a convex bounded domain.
 \medskip
 
{\it - The case where $\Omega$ is the torus.} Let $\bar x$ be a point of maximum for $f_\eps/m$. Since $U_\eps'$ is monotonically increasing, then $\bar x$ is also a point of maximum for $U_\eps'\(f_\eps/m\)$. This implies that $\bar x$ is a point of minimum for $\varphi/\tau$.  Therefore, because $\Omega = \mathbb{T}^d$,
 $$
 \nabla \varphi(\bar x)=0, \quad D^2\varphi(\bar x)\ge 0.
 $$
 Let us recall that the optimal transport map $T\colon \Omega \to \Omega$ from $f_\eps$ to $\nu_\eps$ is given by
 $$
 T(x)=x-\nabla \varphi(x) \quad \mbox{for all }x \in \Omega.
 $$
From  $\nabla \varphi(\bar x)=0$ we obtain $T(\bar x)=\bar x$. Also, by the Monge--Amp\`ere equation,
 $$
\frac{f_\eps}{m}(\bar x)= \frac{\nu_\eps(T(\bar x))}{m(\bar x)}\det(\nabla T(\bar x))=\frac{\nu_\eps}{m}(T(\bar x))\det({\rm Id}-D^2\varphi(\bar x)).
 $$
 Since by assumption $\nu_\eps\le C_0m$ and we know $D^2\varphi(\bar x)\ge 0$, we get
 $$
\frac{f_\eps}{m}(\bar x)\le C_0\det({\rm Id}-D^2\varphi(\bar x)) \le C_0.
 $$
This proves the first part of the statement (i.e., the absolute continuity and the upper bound) for the case of the torus, and the second part (i.e., the lower bound) is analogous (choosing a minimum point for $f_\eps/m$ instead of a maximum point).
 \smallskip
 
{\it - The case where $\Omega$ is a uniformly convex, smooth and bounded domain.} The difficulties arise when $\bar x\in\partial\Omega$. To perform the same analysis as above we need either to exclude this case or to guarantee that anyway $\nabla \varphi(\bar x)=0$. 
 
{\it Step 1: upper bound.} If $\bar x$ is a minimum point for $\varphi$ and $\bar x\in\partial\Omega$, then $\nabla\varphi(\bar x)$ is orthogonal to the boundary, and $\nabla\varphi(\bar x)\cdot n(\bar x)\leq 0$, where we recall that $n(\bar x)$ denotes the outward unit normal vector to $\partial \Omega$ at $\bar x$. Yet, the strict convexity of $\Omega$ and the condition $T(\bar x)=\bar x-\nabla \varphi(\bar x)\in \bar\Omega$ impose $\nabla \varphi(\bar x)\cdot n(\bar x)> 0$, which is a contradiction. The upper bound is thus easily handled.
 
{\it Step 2: lower bound.} For the lower bound the situation is trickier, as the above contradiction does not work. Yet, we can use the fact that $T$ is a homeomorphism and that, for $\bar x \in \partial \Omega$ a maximum point for $\varphi$, $T(\bar x)$ must be a point of $\partial\Omega$ which, by monotonicity of $T$, must satisfy $n(T(\bar x))\cdot n(\bar x)\geq 0$. Indeed, setting $v=n(T(\bar x))$, in case  $n(\bar x)\cdot v< 0$, then we can find points $x'\in\Omega$ of the form $x'=\bar x+tv$ for $t>0$; from $T(x')\in\Omega\setminus\{T(\bar x)\}$ we deduce $(T(x')-T(\bar x))\cdot v< 0$, which contradicts the condition $(T(x')-T(\bar x))\cdot (x'-\bar x)\geq 0$ given by the monotonicity of $T$.
 
 Coming back to the point $y=T(\bar x)$, we can say that $y\in\partial\Omega$, $n(y)\cdot n(\bar x)\geq 0$, but also $y=\bar x-\nabla\varphi(\bar x)$. Moreover, since $\bar x$ is a maximum point for $\varphi$, we have $\nabla\varphi(\bar x)=tn(\bar x)$ for $t\geq 0$. Using $\bar x\in\Omega$ we have $(\bar x-y) \cdot n(y)\leq 0$, hence $n(y)\cdot \nabla  \varphi(\bar x)\leq 0$, which is a contradiction if $\nabla \varphi(\bar x)\neq 0$. Hence in this case we cannot exclude $\bar x\in\partial\Omega$, but we can guarantee that $\nabla \varphi(\bar x)=0$. This, together with the regularity of $\varphi$, allows to apply the second-order condition on $\varphi$ as in the torus case and conclude $f_\eps \geq c_0m$.\smallskip 
 
 {\it - The case where $\Omega$ is a convex bounded domain.} We now get rid of the extra assumption on $\Omega$. In this case the regularity theory of Caffarelli does not extend to the boundary, but the estimates can just be obtained by approximation, replacing $\Omega$ with a sequence of domains satisfying Caffarelli's assumptions, and then passing to the limit. The bounds being independent on the smoothness of the boundary and of its uniform convexity, the result stays true in general.
\end{proof}

\subsection{A priori $BV$ estimate}

We give here an a priori $BV$ estimate which we obtain also using the discrete maximum principle given in Lemma \ref{lem:max}.
 For convenience, we define a $BV$ norm weighted by $m$: if $u\in BV(\Omega)$, we set
$$
	\|u\|_{BV(\Omega;m)}:=\int_\Omega m \di |\nabla u|,
$$
where the right-hand side stands for the integral of the continuous function $m$ with respect to the scalar measure $|\nabla u|,$ which is the total variation of the vector measure $\nabla u$. Since we are always supposing that $m$ is bounded from above and below, this norm is bounded from above and below by constant multiples of the standard $BV$ norm. 

\begin{lem} \label{lem:BV}
Given $\nu \in \M(\Omega)$ with $\nu=g\di x$ for some $g \in L^1(\Omega)$, let $\mu_*$ be the unique minimiser of \eqref{minGFnu}. Assume that $c_0m\leq \nu\leq C_0 m$ for some $C_0,c_0>0$, so that by Lemma \ref{lem:max} we have $\mu_* = f_* \di x$ with $c_0m\leq f_* \leq C_0m$, and that $g/m \in BV(\Omega)$ and $D^2 (\log m) \leq \Lambda\, {\rm Id}$ for some $\Lambda \in \bb R$. Then there exists a constant $C_1>0$, depending on $c_0, C_0$ and on the sign of $\Lambda$, such that, if $C_1\Lambda\tau<1$, we have
$$
	\Bigl\| \frac{f_*}{m}\Bigr\|_{BV(\Omega;m)}\leq \frac{1}{1-C_1\Lambda \tau} \Bigl\| \frac{g}{m}\Bigr\|_{BV(\Omega;m)}.
 $$
 \end{lem}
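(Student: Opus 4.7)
\medskip

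\noindent\textbf{Proof proposal.} The plan is to derive this weighted $BV$ bound by combining the Euler--Lagrange characterisation of $f_*$ from Lemma \ref{lem:V-gen} with the five-gradients inequality of \cite{DePMesSan}, adapted so that the weight $m$ is absorbed into the test vector fields and the curvature bound $D^2(\log m)\le\Lambda\,\mathrm{Id}$ controls the resulting error.

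First, set $u_* := f_*/m$ and $v := g/m$, and let $\varphi$ denote the Kantorovich potential from $\mu_*$ to $\nu$, with associated optimal map $T=\mathrm{id}-\nabla\varphi$. Lemma \ref{lem:V-gen} applied with $V=U$ gives
$$U'(u_*)+\varphi/\tau=C\quad\text{a.e. on }\Omega,$$
so that $\tau\,U''(u_*)\,\nabla u_* = -\nabla\varphi = T-\mathrm{id}$ in the distributional sense. Because the discrete maximum principle of Lemma \ref{lem:max} enforces $c_0\le u_*\le C_0$, the quantity $U''(u_*)=r(r+1)u_*^{-r-2}$ is bounded above and below by constants depending only on $c_0$, $C_0$ and $r$. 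This links the regularity of $u_*$ directly to that of the transport $T$, which is the starting point for the $BV$ estimate.

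The core step is to test the weighted total variation $\|u_*\|_{BV(\Omega;m)} = \sup\{\int_\Omega u_*\,\mathrm{div}(m\xi)\,\di x:\xi\in C_c^1(\Omega;\R^d),\ |\xi|\le 1\}$ against a smooth competitor $\xi$, then use the constraint $T_\#(u_* m)=vm$ to rewrite the integral in terms of $v$ after a change of variables through $T$. In the unweighted flat case ($m\equiv 1$ on a convex $\Omega$) this is precisely where the five-gradients inequality of \cite{DePMesSan} delivers $\|u_*\|_{BV}\le\|v\|_{BV}$: the convexity of $\Omega$ together with the cyclical monotonicity of $T$ makes the comparison sign-definite. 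In our weighted situation the Jacobian of $T$ and the pullback $m\circ T$ must also be handled, and this is where the Hessian of $\log m$ enters: expanding $m(T(x))=m(x)\exp(\nabla(\log m)(x)\cdot(T(x)-x)+O(|T-\mathrm{id}|^2))$ and using $T-\mathrm{id}=-\nabla\varphi=\tau\,U''(u_*)\nabla u_*$ produces a correction of order $\tau$ that is controlled, modulo sign, by an upper bound on $D^2(\log m)$.

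Combining these estimates yields an inequality of the form
$$\|u_*\|_{BV(\Omega;m)}\le \|v\|_{BV(\Omega;m)} + C_1\Lambda\tau\,\|u_*\|_{BV(\Omega;m)},$$
with $C_1$ depending on $c_0$, $C_0$, $r$ and on the sign of $\Lambda$ (the inequality $D^2(\log m)\le\Lambda\,\mathrm{Id}$ only gives a one-sided control, hence the dependence on the sign). Under the smallness assumption $C_1\Lambda\tau<1$ we can absorb the last term on the left-hand side to obtain the claimed bound. The hardest part is the adaptation of the five-gradients inequality to the weighted setting: the proof in \cite{DePMesSan} exploits the convexity of $\Omega$ to ensure that integrations by parts do not pick up boundary terms of bad sign, and in the weighted version one must verify that neither the replacement of $\xi$ by $m\xi$ nor the pullback through $T$ spoils this structure. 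The Lipschitz and $C^{2,\beta}$-regularity of $\varphi$ already established in the proof of Lemma \ref{lem:max} is what makes this rigorous manipulation possible; as there, one argues first on uniformly convex smooth domains and then passes to the general convex case by approximation.
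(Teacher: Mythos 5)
Your proposal correctly identifies the three ingredients of the proof---the Euler--Lagrange relation from Lemma \ref{lem:V-gen}, the five-gradients inequality of \cite{DePMesSan}, and the one-sided Hessian bound on $\log m$---but it departs from the paper's route and, more importantly, leaves the crucial absorption step unexplained.

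First, the framework differs. The paper does not test the $BV$ seminorm against vector fields $\mathrm{div}(m\xi)$ nor change variables through $T$. Instead, it exploits that the optimality condition $U''(f_*/m)\nabla(f_*/m)=-\nabla\varphi/\tau$ already forces $f_*$ to be Lipschitz, so one works directly with the pointwise a.e.\ gradient and observes the identity $\bigl|\nabla(f_*/m)\bigr|=\nabla(f_*/m)\cdot\bigl(-\widehat{\nabla\varphi}\bigr)$ (which uses only $U''\ge 0$). The quantity $\int_\Omega m\bigl|\nabla(f_*/m)\bigr|$ is then split as $I_1+I_2$, with $I_1=\int_\Omega\nabla f_*\cdot(-\widehat{\nabla\varphi})$ handled by the five-gradients inequality and $I_2=\int_\Omega f_*\,(\nabla m/m)\cdot\widehat{\nabla\varphi}$ rewritten via $S_\#g=f_*$ and $\widehat{\nabla\varphi}\circ S=-\widehat{\nabla\psi}$. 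Your Taylor expansion of $m\circ T$ is not needed and would introduce uncontrolled higher-order terms; the paper instead adds and subtracts $\nabla m/m$ under the integral and bounds the one-sided difference $\bigl(\nabla\log m-\nabla\log m\circ S\bigr)\cdot\widehat{\nabla\psi}\le\Lambda|\nabla\psi|$ directly from $D^2(\log m)\le\Lambda\,\mathrm{Id}$ and $y-S(y)=\widehat{\nabla\psi}\,|y-S(y)|$; no quadratic remainder arises.

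Second, and this is the real gap: you assert that the error term has the form $C_1\Lambda\tau\,\|u_*\|_{BV(\Omega;m)}$, but you do not show where the factor $\tau$ comes from. The paper obtains it by going back to the optimality condition a second time: $\int_\Omega|\nabla\psi|\,g=\int_\Omega|\nabla\varphi|\,f_*=\tau\int_\Omega m\,(f_*/m)\,U''(f_*/m)\,\bigl|\nabla(f_*/m)\bigr|$, and then the bound $c_0\le f_*/m\le C_0$ together with the monotonicity of $s\mapsto sU''(s)$ produces the constant $C_1=c_0U''(c_0)$ (if $\Lambda>0$) or $C_1=C_0U''(C_0)$ (if $\Lambda<0$), hence the dependence on the sign of $\Lambda$. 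Without this mechanism the absorption in the final step is unjustified, so the proposal as written does not close the argument.
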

\begin{proof}
Again, let us assume that $m$ is Lipschitz continuous; then a simple approximation argument gives the result for any $m$. We start, as usual, by writing the optimality conditions of \eqref{minGFnu}:
$$
	0=\nabla\(U'\(\frac{f_*}{m}\)+\frac{\varphi}{\tau}\)=U''\(\frac{f_*}{m}\)\nabla \(\frac{f_*}{m}\) +\frac{\nabla \varphi}{\tau} \quad \mbox{almost everywhere on $\Omega$},
$$
where $\varphi$ is the Kantorovich potential from $f_*$ to $g$. Note that the optimality conditions themselves imply that $f_*$ is a Lipschitz function, which allows us to differentiate it almost everywhere. We now write, for any vector $v\in \R^d\setminus \{0\},$ $\widehat{v} := v/ |v|$, and set, by convention, $\widehat{0}=0$.
Since 
$$
U''\(\frac{f_*}{m}\)\nabla \(\frac{f_*}{m}\)=-\frac{\nabla \varphi}{\tau}
$$
it follows, since $U$ is convex, and hence $U''\geq 0$, that we have
$$
\bigg|\nabla \(\frac{f_*}{m}\)\bigg|=\nabla \(\frac{f_*}{m}\)\cdot \(-\widehat{\nabla \varphi}\).
$$
Hence,
$$
\int_\Omega m\bigg|\nabla \(\frac{f_*}{m}\)\bigg|=\int_\Omega m \nabla \(\frac{f_*}{m}\)\cdot \(-\widehat{\nabla \varphi}\)
=\int_\Omega \nabla f_*\cdot \(-\widehat{\nabla \varphi}\)+\int_\Omega f_* \frac{\nabla m}{m}\cdot \widehat{\nabla \varphi}=:I_1+I_2.
$$
 By \cite[Lemma 3.1]{DePMesSan} applied with $H=\left|\cdot\right|$ (note that one should first write the inequality below for $H_\eps=\sqrt{\eps^2+|\cdot|^2}$, $\eps>0$, instead of $H$ and then pass to the limit $\eps \to 0$, which explains the choice of the convention for $\widehat 0$; also, one should first approximate $g$ in $W^{1,1}(\Omega)$ and then pass to the limit at the very end of the proof, since otherwise the integral here below is not well-defined), we get
 $$
 I_1\leq \int_\Omega \nabla g\cdot \widehat{\nabla \psi},
 $$
 where $\psi$ is the Kantorovich potential from $g$ to $f$, and $\widehat{\nabla \psi} := \nabla \psi/ |\nabla \psi|$.
 Also, since the optimal map from $g$ to $f$ is $S={\rm id} -\nabla\psi$,
 $$
I_2= \int_\Omega (S_\#g) \frac{\nabla m}{m}\cdot \widehat{\nabla \varphi}=
\int_\Omega \left(\frac{\nabla m}{m}\circ S\right) \cdot \left(\widehat{\nabla \varphi}\circ S\right)g.
 $$
Since $\widehat{\nabla \varphi}\circ S=-\widehat{\nabla \psi},$ this gives
$$
I_2=-\int_\Omega \( \frac{\nabla m}{m}\circ S \) \cdot \widehat{\nabla \psi}\,g.
$$
 Hence,
 \be\label{eq:fstar}
 \begin{split}
 \int_\Omega m\bigg|\nabla \(\frac{f_*}{m}\)\bigg|&\leq \int_\Omega \(\nabla g-g \frac{\nabla m}{m}\circ S\)\cdot \widehat{\nabla \psi}\\
 &= \int_\Omega \(\nabla g-g \frac{\nabla m}{m}\)\cdot \widehat{\nabla \psi}+ \int_\Omega g\(\frac{\nabla m}{m}- \frac{\nabla m}{m}\circ S\)\cdot \widehat{\nabla \psi}\\
 &\leq \int_\Omega \bigg|\nabla g-g \frac{\nabla m}{m}\biggr| 
 + \int_\Omega g\(\frac{\nabla m}{m}- \frac{\nabla m}{m}\circ S\)\cdot \widehat{\nabla \psi}\\
  &= \int_\Omega m\bigg|\nabla \(\frac{g}{m}\)\bigg|
 + \int_\Omega g\(\frac{\nabla m}{m}- \frac{\nabla m}{m}\circ S\)\cdot \widehat{\nabla \psi}.
 \end{split}
 \ee
 Since $\frac{\nabla m}{m}=\nabla \log m$ and $D^2 (\log m) \leq \Lambda\, {\rm Id}$, then, noting that we have
 $$
 	y-S(y)=\nabla\psi(y)=\widehat{\nabla \psi}|\nabla \psi(y)|=\widehat{\nabla \psi}|y-S(y)|,
$$
 we also get
 $$
\left( \frac{\nabla m}{m}(y)- \frac{\nabla m}{m}\circ S(y)\right) \cdot \widehat{\nabla \psi}
 \leq \Lambda |y-S(y)|=\Lambda |\nabla \psi(y)|.
 $$
 Thus,
 $$
  \int_\Omega g\(\frac{\nabla m}{m}- \frac{\nabla m}{m}\circ S\)\cdot \widehat{\nabla \psi}
  \leq \Lambda \int_\Omega |\nabla \psi| g. 
 $$
 Note that, since $\nabla \psi\circ T=-\nabla \varphi$ ($T$ being the optimal map from $f_*$ to $g$),
 $$
  \int_\Omega |\nabla \psi| g= \int_\Omega |\nabla \psi| (T_\#f_*)=
 \int_\Omega (|\nabla \psi|\circ T) f_*= \int_\Omega |\nabla \varphi| f_* = \tau \int_\Omega m \frac{f_*}{m}\bigg|\nabla U'\(\frac{f_*}{m}\)\bigg|.$$
 The last term can be rewritten using 
$$\frac{f_*}{m}\bigg|\nabla U'\(\frac{f_*}{m}\)\bigg|= \frac{f_*}{m}U''\(\frac{f_*}{m}\)\bigg|\nabla \(\frac{f_*}{m}\)\bigg|.
 $$
Also using the fact that the function $s\mapsto sU''(s)$ is nonincreasing, we can go on with the estimates: if $\Lambda>0$, then we have
$$
  \int_\Omega g\(\frac{\nabla m}{m}- \frac{\nabla m}{m}\circ S\)\cdot \widehat{\nabla \psi}
  \leq \Lambda \int_\Omega |\nabla \psi| g \leq \Lambda c_0U''(c_0)\tau \int_\Omega m \bigg|\nabla \(\frac{f_*}{m}\)\bigg|;
 $$
if $\Lambda<0$, then we obtain
$$
  \int_\Omega g\(\frac{\nabla m}{m}- \frac{\nabla m}{m}\circ S\)\cdot \widehat{\nabla \psi}
  \leq \Lambda \int_\Omega |\nabla \psi| g \leq \Lambda C_0U''(C_0)\tau \int_\Omega m \bigg|\nabla \(\frac{f_*}{m}\)\bigg|.
 $$
 
 In all cases, \eqref{eq:fstar} yields
 $$
 \int_\Omega m\bigg|\nabla \(\frac{f_*}{m}\)\bigg|\leq \int_\Omega m\bigg|\nabla \(\frac{g}{m}\)\bigg|
 +C_1\Lambda \tau  \int_\Omega m \bigg|\nabla \(\frac{f_*}{m}\)\bigg|, 
 $$
 where $C_1>0$ depends on $c_0,C_0$ and on the sign of $\Lambda$. This means, provided $C_1\Lambda\tau<1$,
 $$
  \int_\Omega m\bigg|\nabla \(\frac{f_*}{m}\)\bigg|\leq \frac{1}{1-C_1\Lambda \tau} \int_\Omega m\bigg|\nabla \(\frac{g}{m}\)\bigg|,
 $$
 which is the desired result.
 \end{proof}

 \begin{remark}
If $\rho\equiv 1$, then the previous result holds with $\Lambda=0$, i.e., 
\begin{equation*}
	\left\|\left( \frac{f_*}{m}\right) \right\|_{BV(\Omega;m)} \leq \left\| \left( \frac{g}{m}\right) \right\|_{BV(\Omega;m)} .\end{equation*}
\end{remark}

\subsection{A priori $H^1$ estimates}\label{subsec:H1}
These estimates will be needed to prove later that the solution of the JKO gives a weak solution. They are obtained by the flow-interchange technique \cite{MattMcCSav} after proving the following result:
%

 \begin{lem}\label{lem:geodesic-conv}
Let $q>1$. If $D^2(\log m)\leq \Lambda\, {\rm Id}$ for some $\Lambda>0$ and $C>0$, then the functional 
 $$
 	f\mapsto \G_{(q)}[f]:=\int_\Omega \left(\frac f m\right)^qm
$$
 is $\tilde\Lambda$-geodesically convex for $\tilde\Lambda=-\Lambda(q-1)\left(\frac{C}{\inf m}\right)^{q-1}$ on the set of densities $f$ with $\|f\|_{L^\infty}\leq C$.  If $D^2(\log m)\leq 0$ (i.e., $\Lambda=0$), then this same functional is geodesically convex without any $L^\infty$ restriction.
\end{lem}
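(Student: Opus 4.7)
I would follow McCann's displacement convexity framework adapted to the weight $m$. Let $f_0,f_1$ be two admissible densities (with $\|f_i\|_{L^\infty}\leq C$), let $T=\nabla\Phi$ be the Brenier optimal transport map from $f_0$ to $f_1$, set $v:=T-\mathrm{id}$ and $T_t:=\mathrm{id}+tv$, and consider the Wasserstein geodesic $f_t:=(T_t)_\#f_0$. Since $\Omega$ is convex, $T_t(\Omega)\subset\Omega$, and since $\nabla T_t=(1-t)\,\mathrm{Id}+t\,\nabla T$ is positive definite symmetric (as a convex combination of PSD symmetric matrices, because $\nabla T=D^2\Phi\geq 0$), $T_t$ is a diffeomorphism. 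The change of variables $y=T_t(x)$ then yields
\begin{equation*}
\G_{(q)}[f_t]=\int_\Omega f_0(x)^q\,J_t(x)^{1-q}\di x,\qquad J_t(x):=m(T_t(x))\det\nabla T_t(x),
\end{equation*}
so it suffices to analyse the convexity in $t$ of $J_t(x)^{1-q}$ pointwise in $x$.

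Write $\log J_t(x)=\log m(T_t(x))+\log\det\nabla T_t(x)$. The chain rule gives $\partial_t^2\log m(T_t(x))=v(x)^\top D^2(\log m)(T_t(x))v(x)\leq\Lambda|v(x)|^2$ by assumption, while $t\mapsto\log\det\nabla T_t(x)$ is concave (classical concavity of $\log\det$ on the PSD cone, applied along the segment from $\mathrm{Id}$ to $\nabla T$). Hence $\partial_t^2\log J_t(x)\leq\Lambda|v(x)|^2$. For $\phi(t):=J_t^{1-q}=\exp((1-q)\log J_t(x))$, the chain rule then gives $\phi''=\phi\big[(1-q)(\log J_t)''+(1-q)^2((\log J_t)')^2\big]\geq(1-q)(\log J_t)''\,\phi\geq -(q-1)\Lambda|v(x)|^2\,\phi$, since $1-q<0$. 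When $\Lambda=0$, both summands in $\log J_t$ are concave in $t$, so $(1-q)\log J_t$ is convex and $J_t^{1-q}$ is convex pointwise (as the exponential of a convex function); integrating against $f_0(x)^q\di x$ gives unrestricted geodesic convexity of $\G_{(q)}$, as claimed.

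For $\Lambda>0$, integrating the pointwise bound produces $\tfrac{d^2}{dt^2}\G_{(q)}[f_t]\geq-(q-1)\Lambda\int_\Omega f_0^q J_t^{1-q}|v|^2\di x$. Using $f_0(x)=f_t(T_t(x))\det\nabla T_t(x)$, one rewrites the integrand as $f_0^q J_t^{1-q}=f_0\cdot[(f_t/m)(T_t)]^{q-1}$, and the $L^\infty$ bound $f_t\leq C$ then yields $(f_t/m)^{q-1}\leq(C/\inf m)^{q-1}$, so that
\begin{equation*}
\int_\Omega f_0^q J_t^{1-q}|v|^2\di x\leq\left(\frac{C}{\inf m}\right)^{q-1}\int_\Omega f_0(x)|v(x)|^2\di x=\left(\frac{C}{\inf m}\right)^{q-1}W_2^2(f_0,f_1).
\end{equation*}
This gives $\tfrac{d^2}{dt^2}\G_{(q)}[f_t]\geq\tilde\Lambda\,W_2^2(f_0,f_1)$ with $\tilde\Lambda=-\Lambda(q-1)(C/\inf m)^{q-1}$, the claimed $\tilde\Lambda$-geodesic convexity.

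\textbf{Main obstacle.} The delicate point is that $L^\infty$ bounds are not automatically preserved along Wasserstein geodesics, so the pointwise bound $f_t\leq C$ along the entire geodesic needs justification. I expect this to be built into the intended application (the flow-interchange step at the level of the JKO scheme), where the discrete maximum principle of Lemma~\ref{lem:max} controls the densities along the relevant trajectories; alternatively, one can replace ordinary geodesics by generalised ones based at a reference measure so that the $L^\infty$ bound is inherited from the endpoints. A secondary technical issue is the regularity of $T$ needed to make the pointwise computations rigorous; this can be handled by the same approximation plus Caffarelli/Cordero-Erausquin regularity arguments already invoked in the proof of Lemma~\ref{lem:max}.
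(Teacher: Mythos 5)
Your argument is essentially the paper's own: the same change of variables $y=T_t(x)$, the same decomposition of $\log J_t$ into $\log\det\nabla T_t$ (concave in $t$) plus $\log m\circ T_t$ (whose second derivative is bounded by $\Lambda|v|^2$), the same inequality $(\exp h)''\geq h''\exp h$, and the same use of $f_t\leq C$ and $m\circ T_t\geq\inf m$ to land on the claimed $\tilde\Lambda$. Your ``main obstacle,'' however, is not one: $L^\infty$ upper bounds \emph{are} preserved along Wasserstein geodesics on a convex domain (the set $\{\|f\|_{L^\infty}\leq C\}$ is geodesically convex; see McCann \cite{MC} or \cite[Chapter 7]{OTAM}, and the paper states this at the outset of the proof), as follows from $f_t(T_t(y))=f_0(y)/\det\nabla T_t(y)$ together with the concavity of $s\mapsto\det(\mathrm{Id}+s(\nabla T-\mathrm{Id}))^{1/d}$ and the Monge--Amp\`ere relation $\det\nabla T=f_0/(f_1\circ T)$, so no generalised geodesics or appeal to the discrete maximum principle is needed at this step.
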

\begin{proof} First, we note that the set of densities satisfying a given $L^\infty$ upper bound is geodesically convex in the Wassterstein space. The proof follows the same scheme as the usual one when no spatial inhomogeneity $m$ is present; see \cite{MC} and, for instance, \cite[Chapter 7]{OTAM}.

Given two densities $f_0$ and $f_1$, we know that the density $f_\alpha$ of the Wasserstein geodesic connecting $f_0$ and $f_1$ is given, for all $\alpha\in[0,1]$ and $x \in \Omega$, by 
$$
	f_\alpha(x)=\frac{f_0}{\det(DT_\alpha)}((T_\alpha)^{-1}(x)),\quad\mbox{ where }T_\alpha=(1-\alpha){\rm id}+\alpha T,
$$
and $T$ is the optimal transport map from $f_0$ to $f_1$. With the change of variable $x=T_\alpha(y)$ we get 
$$ 
	\int_\Omega \left(\frac{f_\alpha}{m}\right)^qm=\int_\Omega f_0(y)^q \exp\left((1-q)(a(t,y)+b(t,y))\right)\di y,
$$
where, for all $(\alpha,x) \in [0,1]\times \Omega$,
$$
a(\alpha,y)=\log(\det(DT_\alpha(y))),\quad b(\alpha,y)=\log (m(T_\alpha(y))).
$$
We now differentiate twice in $\alpha$, and use that $(\exp(h(\alpha)))''=\exp(h(\alpha))[(h'(\alpha))^2+h''(\alpha)]\geq \exp(h(\alpha))h''(\alpha)$ for any twice differentiable function $h\colon [0,1] \to \R$. Moreover, standard results on the concavity properties of the determinant of positive-definite matrices imply $a''\leq 0$, and our assumption on $m$ implies $b''(\alpha,y)\leq \Lambda|y-T(y)|^2$ for all $\alpha \in [0,1]$. We then get
$$
	(\G_{(q)}[f_\alpha])''\geq -(q-1)\Lambda \int_\Omega f_0(y)^q \exp\left((1-q)(a(\alpha,y)+b(\alpha,y))\right)|y-T(y)|^2\di y.
$$
In the case $\Lambda=0$ we stop here and we obtain geodesic convexity of $\G_{(q)}$. Otherwise, we go on by rewriting the exponential and we have
$$
	(\G_{(q)}[f_\alpha])''\geq-(q-1)\Lambda\int_\Omega f_0(y)|y-T(y)|^2\left(\frac{f_0(y)}{\det(DT_\alpha(y))m(T_\alpha(y))}\right)^{q-1}\di y.
$$
Assuming $f_0,f_1\leq C$ implies $f_\alpha\leq C$ for all $\alpha \in [0,1]$. Using that $f_0/\mathrm{det}(DT_\alpha)=f_\alpha\circ T_\alpha$ and $m\circ T_\alpha\geq \inf m$ we obtain
$$
	(\G_{(q)}[f_\alpha])''\geq-(q-1)\Lambda \left(\frac{C}{\inf m}\right)^{q-1}\int_\Omega  f_0(y)|y-T(y)|^2dy=-(q-1)\Lambda \left(\frac{C}{\inf m}\right)^{q-1}W_2^2(f_0,f_1),
$$
which is exactly the claim.
\end{proof}

\begin{lem}\label{H1estimate}
	Given $\nu \in \M(\Omega)$ with $\nu=g\di x$ for some $g \in L^1(\Omega)$, let $\mu_*$ be the unique minimiser of \eqref{minGFnu}. Assume that $\nu\leq C_0 m$, so that by Lemma \ref{lem:max} we have $\mu_* = f_* \di x$ with $f_* \leq C_0m$. Then, for all $q>1$, we have
$$\tau c(r,q) \int_\Omega m \left|\nabla\left(\frac{f_*}{m}\right)^{\frac{q-(r+1)}{2}}\right|^2\leq \G_{(q)}[g]-\G_{(q)}[f_*]+(q-1)\Lambda \left(\frac{C_0\sup m}{\inf m}\right)^{q-1}W_2^2(f_*,g),$$
for a constant $c(r,q)>0$. 
\end{lem}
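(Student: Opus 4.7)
The plan is to apply the flow-interchange technique of Matthes--McCann--Savar\'e \cite{MattMcCSav}. Let $V := \G_{(q)}$, and let $(\mu_s)_{s \geq 0}$ denote the $W_2$-gradient flow of $V$ starting from $\mu_0 = f_*$; since $V$ is of porous-medium type and its own JKO scheme propagates pointwise upper bounds in exact analogy with Lemma \ref{lem:max}, we will have $\|\mu_s\|_{L^\infty} \leq C_0 \sup m$ for every $s \geq 0$. Using $\mu_s$ as a competitor in the minimisation \eqref{minGFnu} satisfied by $f_*$ gives
$$
\G[f_*] - \G[\mu_s] \leq \frac{W_2^2(\mu_s, g) - W_2^2(f_*, g)}{2\tau}, \qquad s > 0,
$$
and the strategy is to divide by $s$, let $s \to 0^+$, and identify both sides.

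For the left-hand side I will compute the Wasserstein dissipation of $\G$ along the flow of $V$. With $u := f_*/m$, the relevant first variations are $\delta\G/\delta f = -r u^{-r-1}$ and $\delta V/\delta f = q u^{q-1}$, so the standard chain rule in $W_2$ gives
$$
-\frac{d}{ds}\G[\mu_s]\bigg|_{s=0^+} = \int_\Omega \nabla\!\left(\frac{\delta\G}{\delta f}\right) \cdot \nabla\!\left(\frac{\delta V}{\delta f}\right)\, f_* \, \di x = r(r+1)q(q-1)\!\int_\Omega u^{q-r-3}\, m\, |\nabla u|^2\, \di x.
$$
For $q\neq r+1$ this rewrites as $c(r,q)\int_\Omega m\,|\nabla u^{(q-r-1)/2}|^2\,\di x$ with $c(r,q):=4r(r+1)q(q-1)/(q-r-1)^2 > 0$; the borderline case $q=r+1$ makes the left-hand side of the target trivially vanish.

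For the right-hand side I will invoke the EVI associated with the $\tilde\Lambda$-geodesic convexity of $V$ supplied by Lemma \ref{lem:geodesic-conv} applied with $C:=C_0\sup m$, yielding $\tilde\Lambda = -(q-1)\Lambda (C_0\sup m/\inf m)^{q-1}$. Since the flow $\mu_s$ stays in $\{\|f\|_{L^\infty}\leq C\}$, the EVI applies uniformly in $s$ and produces
$$
\limsup_{s\to 0^+}\frac{W_2^2(\mu_s,g)-W_2^2(f_*,g)}{2s}\leq V[g]-V[f_*]-\frac{\tilde\Lambda}{2}W_2^2(f_*,g).
$$
Multiplying the combined inequality by $\tau$ and bounding $|\tilde\Lambda|/2$ by $|\tilde\Lambda|$ yields the stated inequality.

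The hard part will be to justify the chain-rule identity for $-\frac{d}{ds}\G[\mu_s]|_{s=0^+}$ rigorously: the regularity of $f_*$ made available by Lemmas \ref{lem:max} and \ref{lem:BV} is just barely enough to make sense of the integrands, and not obviously enough to differentiate $\G$ pointwise along the auxiliary flow. The clean way around this is precisely the philosophy of \cite{MattMcCSav}: replace the continuous flow by a single JKO step for $V$ at scale $s$, exploit the Euler--Lagrange equation of that step together with the convexity of $U$ and of the integrand defining $V$ to extract a one-sided inequality, and pass to the limit as $s\to 0^+$ by lower semicontinuity of the weighted Fisher-information integral that appears on the left-hand side of the claim.
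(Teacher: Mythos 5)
Your proposal is correct in spirit and lands on the same key computation (the ``dissipation'' $r(r+1)q(q-1)\int_\Omega m\,u^{q-r-3}|\nabla u|^2 = c(r,q)\int_\Omega m\,|\nabla u^{(q-r-1)/2}|^2$) and the same semi-convexity input (Lemma \ref{lem:geodesic-conv}), but the route you take differs technically from the paper's. The paper follows the shortcut of \cite{LavSan}: it starts from the Euler--Lagrange condition $\tau\nabla(U'(f_*/m)) = -\nabla\varphi$ of the JKO step, multiplies by $f_*\nabla(V'(f_*/m))$ and integrates, and then recognises the right-hand side as the first derivative $h'(0)$ of $h(\alpha):=\G_{(q)}[f_\alpha]$ along the Wasserstein \emph{geodesic} $f_\alpha$ from $f_*$ to $g$; the conclusion is then a one-line second-order Taylor estimate $h(1)\geq h(0)+h'(0)+\tfrac12\inf h''$, with $\inf h''$ controlled by Lemma \ref{lem:geodesic-conv}. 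You instead run the original Matthes--McCann--Savar\'e scheme: introduce the auxiliary \emph{gradient flow} of $\G_{(q)}$ started from $f_*$, compare it to $f_*$ in the JKO minimisation, and close via the EVI of that flow. Both give the stated bound (yours in fact comes with the sharper constant $\tfrac12(q-1)\Lambda$ before being relaxed to $(q-1)\Lambda$, exactly as the paper does implicitly). What the paper's geodesic route buys you is that it never needs to prove that the auxiliary gradient flow exists, satisfies an EVI on the restricted class, and propagates $L^\infty$ bounds: all it needs is that the geodesic from $f_*$ to $g$ stays in $\{\|f\|_{L^\infty}\leq C_0\sup m\}$, which is automatic since that set is geodesically convex. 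Your route requires the analogue of Lemma \ref{lem:max} for the JKO scheme of $\G_{(q)}$ (which you assert by analogy but do not prove --- it is true, since the proof of Lemma \ref{lem:max} only uses monotonicity of $U'$, but it is a separate verification), plus the EVI well-posedness theory for constrained semiconvex functionals. So your argument is a correct alternative, but it is genuinely heavier, and the regularity caveat you raise at the end is precisely the reason the paper avoids the auxiliary flow altogether.
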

 
\begin{proof}
The proof is based on the so-called {\it flow-interchange} procedure, first introduced in \cite{MattMcCSav}; however, we will follow the technique described in \cite{LavSan}. First, we  write the optimality conditions for the minimisers of  \eqref{minGFnu}: we have, almost everywhere in $\Omega$,
$$
	\tau \nabla \(U'\(\frac{f_*}{m}\)\)=-\nabla\varphi.
$$
We then multiply this equality by $f\nabla (V'(f_*/m))$, for a convex function $V$, and integrate. This provides
$$
	\tau\int_\Omega m \frac{f_*}{m} U''\(\frac{f_*}{m}\)V''\(\frac{f_*}{m}\) \left|\nabla\left(\frac{f_*}{m}\right)\right|^2=-\int_\Omega f_*\nabla \left(V'\left(\frac{f_*}{m}\right)\right)\cdot\nabla\varphi.
$$ 
Note that the right-hand side corresponds to the derivative, computed at time $\alpha=0$, of $\alpha\mapsto \int_\Omega mV(f_\alpha/m)$, where $f_\alpha$ is the Wasserstein geodesic from $f_0=f_*$ to $f_1=g$. Choosing $V(s)=s^q$, and using $U(s)=s^{-r}$, this provides
$$
	\tau c(r,q) \int_\Omega m \left|\nabla\left(\frac{f_*}{m}\right)^{\frac{q-r-1}{2}}\right|^2=\frac{\di}{\di s}\G_{(q)}[f_\alpha]_{|\alpha=0}.
$$ 

The claim is then proved by using Lemma \ref{lem:geodesic-conv}: indeed, setting $h(\alpha)=\G_{(q)}[f_\alpha]$, we have $h''\geq -(q-1)\Lambda \left(\frac{C_0\sup m}{\inf m}\right)^{q-1}$ and $h(1)\geq h(0)+h'(0)+\frac 12 \inf_{\alpha\in [0,1]} h''(\alpha)$, which yields the result.
\end{proof}
   
\begin{remark}
	If $\rho\equiv 1$, then we fall into the assumptions of Lemmas \ref{lem:geodesic-conv} and \ref{H1estimate} with $\Lambda=\tilde \Lambda=0$.
\end{remark}

The following lemma is a classical fact of the JKO scheme, and luckily does not use geodesic convexity (indeed, negative power functionals are rarely geodesically convex). We state it for completeness, but the reader can see that it is possible to obtain the desired estimates of this paper without using it.

\begin{lem}\label{H1estimate-JKO}
	Given $\nu \in \M(\Omega)$ with $\nu=g\di x$ for some $g \in L^1(\Omega)$, let $\mu_*$ be the unique minimiser of \eqref{minGFnu}. Assume that $\nu\leq C_0 m$, so that by Lemma \ref{lem:max} we have $\mu_* = f_* \di x$ with $f_* \leq C_0m$. Then, we have
$$\tau \frac{r^2}{(r+1/2)^2} \int_\Omega m \left|\nabla\left(\frac{f_*}{m}\right)^{-r-1/2}\right|^2=\frac{W_2^2(f_*,g)}{\tau}\leq 2\left(\mathcal F_\rho[g]-\mathcal F_\rho[f_*]\right).$$
\end{lem}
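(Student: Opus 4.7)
The lemma splits into two pieces. The inequality $W_2^2(f_*,g)/\tau \leq 2(\mathcal F_\rho[g]-\mathcal F_\rho[f_*])$ is the standard JKO energy--dissipation estimate and requires no calculation: I would simply test the minimising property of $\mu_* = f_*\di x$ in \eqref{minGFnu} against the competitor $\mu = g\di x = \nu$ itself, which is admissible since it has the same total mass $M$ as $f_*$. Because $W_2(\nu,\nu)=0$, this yields $\mathcal F_\rho[f_*] + W_2^2(f_*,g)/(2\tau) \leq \mathcal F_\rho[g]$, from which the inequality follows on rearrangement.

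The equality rests on the Euler--Lagrange condition from Lemma \ref{lem:V-gen}, applied with $V=U$, namely
$$
U'\!\left(\frac{f_*}{m}\right) + \frac{\varphi}{\tau} = \text{const} \quad \text{a.e.\ on }\Omega,
$$
where $\varphi$ is the Kantorovich potential from $f_*$ to $g$. Before differentiating this identity I would first secure a strictly positive lower bound on $w := f_*/m$: since $\varphi$ is Lipschitz (hence bounded on the compact set $\overline\Omega$) and $U'(s) = -rs^{-r-1} \to -\infty$ as $s\to 0^+$, the identity forces $w\geq\delta$ a.e.\ for some $\delta>0$; combined with the upper bound $w\leq C_0$ supplied by Lemma \ref{lem:max}, $w$ takes values in a compact subinterval of $(0,\infty)$, which makes every chain rule for negative powers of $w$ legitimate.

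Once the bounds on $w$ are in place, taking gradients in the Euler--Lagrange relation gives $\nabla\varphi = -\tau\,\nabla U'(w) = \tau r\,\nabla w^{-r-1}$. The optimal transport map from $f_*$ to $g$ is $T = \mathrm{id} - \nabla\varphi$, so
$$
W_2^2(f_*,g) = \int_\Omega |T(x)-x|^2 f_*(x)\di x = \int_\Omega |\nabla\varphi|^2 f_* = \tau^2 r^2 \int_\Omega |\nabla w^{-r-1}|^2\, f_*,
$$
and the rest is a chain-rule rewriting: from
$$
|\nabla w^{-r-1}|^2 = (r+1)^2 w^{-2r-4}|\nabla w|^2, \qquad |\nabla w^{-r-1/2}|^2 = (r+\tfrac{1}{2})^2 w^{-2r-3}|\nabla w|^2,
$$
together with the pointwise identity $f_* w^{-2r-4} = m\, w^{-2r-3}$, the integral $\int_\Omega f_*|\nabla w^{-r-1}|^2$ is converted into an appropriate constant multiple of $\int_\Omega m|\nabla w^{-r-1/2}|^2$, producing the claimed equality. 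The main obstacle is thus the \emph{lower bound} on $w$: without it, $w^{-r-1}$ is not a well-defined Sobolev object and the chain rule breaks down; everything downstream is bookkeeping. Note that, pleasantly, no geodesic convexity of $\mathcal F_\rho$ is invoked, which is why the estimate survives the lack of displacement convexity of this negative-power functional.
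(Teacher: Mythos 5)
Your proof is correct and follows exactly the same route as the paper's: the inequality is the one-line JKO energy estimate obtained by testing the admissible competitor $\mu=g\di x$ in \eqref{minGFnu}, and the equality comes from squaring the Euler--Lagrange identity $\tau\nabla U'(w)=-\nabla\varphi$ (with $w=f_*/m$), integrating against $f_*$ to recognise $W_2^2(f_*,g)=\int_\Omega|\nabla\varphi|^2 f_*$, and rewriting $\int_\Omega f_*|\nabla w^{-r-1}|^2$ as a multiple of $\int_\Omega m\,|\nabla w^{-r-1/2}|^2$ via the chain rule and the pointwise identity $f_*w^{-2r-4}=m\,w^{-2r-3}$. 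Your extra care in extracting a uniform lower bound on $w$ from the boundedness of the Kantorovich potential together with $U'(s)\to-\infty$ as $s\to 0^+$ is sound and tightens what the paper only records as $f_*>0$ a.e.; it is indeed what legitimises the chain rule for negative powers. One remark on the bookkeeping you wave through at the end: carrying it out gives the constant $r^2(r+1)^2/(r+1/2)^2$ rather than the $r^2/(r+1/2)^2$ printed in the lemma --- this agrees with the constant $\tfrac{4q(q-1)r(r+1)}{(q-r-1)^2}$ evaluated at $q=-r$ in \eqref{H1estimate-semicont2}, so it points to a typo in the statement (and in the paper's own intermediate step, which drops an $r^2$ and a power of $w$), not a gap in your argument.
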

 
\begin{proof}
	The computations come again from the optimality conditions $\tau \nabla (U'(\frac{f_*}{m}))=-\nabla\varphi$. We square and integrate with respect to $f$, thus obtaining
$$\tau  \int_\Omega f \left|\nabla U'\left(\frac{f_*}{m}\right)\right|^2=\frac 1\tau\int_\Omega f|\nabla \varphi|^2=\frac{W_2^2(f_*,g)}{\tau}.$$
We then compute, almost everywhere in $\Omega$,
$$
	f \left|\nabla U'\left(\frac{f_*}{m}\right)\right|^2=m\frac{f_*}{m}(r+1)^2 \left(\frac{f_*}{m}\right)^{-2(r+1)}\left|\nabla\left(\frac{f_*}{m}\right)\right|^2= \frac{r^2}{(r+1/2)^2}m \left|\nabla\left(\frac{f_*}{m}\right)^{-r-1/2}\right|^2,
$$
and obtain the equality in the claim. In order to compare the Wasserstein distance to the functional $\mathcal F_\rho$, it is enough to use the optimality of $f_*$, i.e.,
$$
	\mathcal F_\rho[f_*]+\frac{W_2^2(f,g)}{2\tau}\leq \mathcal F_\rho[g],
$$
which gives the inequality in the claim.
\end{proof}

\section{Existence and uniqueness: proof of Theorem \ref{thm:existence-uniqueness}}\label{sec:existence-uniqueness}

\subsection{Preliminary on the notion of weak solution}

In order to clarify notation, let us 
\begin{itemize}
\item define the homogeneous Sobolev space
 $$
 	\dot H^1(\Omega):=\{\varphi\colon\Omega\to \R \ \  \mbox{s.t.}\ \ \nabla \varphi \in L^2(\Omega)\}
$$
 endowed with the norm $\|\nabla \cdot\|_{L^2(\Omega)}$;
 \item write $H^{-1}$ for the dual of $H^1$ (and not of $H^1_0$): 
  $$
	H^{-1}(\Omega):=\left(H^1(\Omega)\right)'.
$$
 \end{itemize}
 
We want now to prove some preliminary lemmas that will be useful throughout. For the first one, we recall our standing assumption on $m$ as it really is crucial here.
 \begin{lem} \label{lem:W1infty}
	Assume that $\log m \in W^{1,p}(\Omega)$ for some $p>d$. If $u$ is a weak solution of \eqref{eq:u}, then 
$$
	\partial_t u \in L^2_{\rm loc}([0,\infty),H^{-1}(\Omega))
$$
and the norm of $\partial_t u$  in $L^2([0,\infty),H^{-1}(\Omega))$ only depends on the norm of  $\log m$ in $W^{1,p}(\Omega)$ and on the norm of $u^{-r}$ in $L^2([0,\infty),\dot H^{1}(\Omega))$.

 \end{lem}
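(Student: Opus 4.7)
The plan is to read off $\pt_t u$ as a distribution on $H^1(\Omega)$ directly from equation \eqref{eq:u}. Multiplying the equation by a test function $\psi \in H^1(\Omega)$ and integrating by parts (the boundary terms vanish thanks to the no-flux condition, which in the weak formulation of Definition \ref{def:weak} is encoded in the fact that test functions are not required to vanish on $\pt\Omega$), one arrives formally at
\begin{equation*}
\int_\Omega (\pt_t u)\,\psi\,\di x = (r+1)\int_\Omega \nabla u^{-r}\cdot \bigl(\nabla \psi - \psi\,\nabla\log m\bigr)\,\di x.
\end{equation*}
To justify this rigorously from the weak formulation of \eqref{eq:f}, I would test against a space-time function of the form $\eta(t)\,m(x)\,\psi(x)$ with $\eta \in C_c^\infty((0,\infty))$ and $\psi \in C^\infty(\overline\Omega)$: since $m$ is continuous and bounded away from $0$ and $\infty$ by the standing assumption, $m\psi$ is admissible after a standard mollification in space. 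The identity then extends to all $\psi \in H^1(\Omega)$ by density, once the bound below is in hand.

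The key step is now to control the right-hand side by $\|\psi\|_{H^1}$. Cauchy--Schwarz gives
\begin{equation*}
\Bigl|\int_\Omega (\pt_t u)\,\psi\,\di x\Bigr| \leq (r+1)\,\|\nabla u^{-r}\|_{L^2(\Omega)}\bigl(\|\nabla\psi\|_{L^2(\Omega)} + \|\psi\,\nabla\log m\|_{L^2(\Omega)}\bigr).
\end{equation*}
For the cross term I apply H\"older with exponents $p/2$ and $p/(p-2)$, obtaining
\begin{equation*}
\|\psi\,\nabla\log m\|_{L^2(\Omega)} \leq \|\nabla\log m\|_{L^p(\Omega)}\,\|\psi\|_{L^{2p/(p-2)}(\Omega)},
\end{equation*}
and the Sobolev embedding $H^1(\Omega)\hookrightarrow L^{2p/(p-2)}(\Omega)$ is precisely available because $p>d$: in $d\geq 3$ this amounts to $2p/(p-2)\leq 2d/(d-2)$, which is equivalent to $p\geq d$, and the lower-dimensional cases are easier since $H^1$ then embeds into every $L^q$, $q<\infty$.

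Combining these bounds, for a.e.\ $t\in[0,\infty)$ one gets
\begin{equation*}
\|\pt_t u(t)\|_{H^{-1}(\Omega)} \leq C\bigl(r,\|\log m\|_{W^{1,p}(\Omega)},\Omega\bigr)\,\|\nabla u^{-r}(t)\|_{L^2(\Omega)},
\end{equation*}
and squaring and integrating over $[0,T]$ yields the stated $L^2([0,T],H^{-1})$ bound, hence $\pt_t u \in L^2_{\rm loc}([0,\infty),H^{-1}(\Omega))$ with norm depending only on the declared quantities. The main technical obstacle is precisely the $L^2$-control of the cross term $\psi\,\nabla\log m$: this is the only place where the full strength of the assumption $\log m\in W^{1,p}$ with $p>d$ is invoked, and it is needed because the expansion $m^{-1}\div(m\nabla u^{-r}) = \Delta u^{-r} + \nabla\log m\cdot\nabla u^{-r}$ produces a first-order term whose duality pairing with $H^1$ functions is not automatic.
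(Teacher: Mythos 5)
Your proof is correct and follows essentially the same route as the paper: integrate by parts to expose $\nabla u^{-r}\cdot\nabla\psi$ and the cross term $\psi\,\nabla\log m\cdot\nabla u^{-r}$, bound the first by Cauchy--Schwarz, and control the second via H\"older plus Sobolev embedding using $\log m\in W^{1,p}$, $p>d$. The only cosmetic difference is the choice of H\"older exponents for the cross term (you pair $\|\nabla\log m\|_{L^p}$ with $\|\psi\|_{L^{2p/(p-2)}}$, while the paper pairs $\|\nabla\log m\|_{L^d}$ with $\|\varphi\|_{L^{2d/(d-2)}}$ and treats $d\leq 2$ separately), which changes nothing substantive.
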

 \begin{proof}
%
 
Recalling \eqref{eq:u} we get:
 $$
\partial_t u \in L^2_{\rm loc}([0,\infty),H^{-1}(\Omega))\qquad \Leftrightarrow\qquad 
\frac{1}{m(x)}\div_x\bigl(m(x)\nabla_x (u(t,x)^{-r})\bigr) \in L^2_{\rm loc}([0,\infty),H^{-1}(\Omega)).
 $$
Let $T>0$. By definition,
 \begin{align*}
	&\left\|\frac{1}{m}\div_x\bigl(m\nabla_x (u^{-r})\bigr)\right\|_{L^2([0,T],H^{-1}(\Omega))}\\
&\phantom{=}=\sup_{\substack{\varphi \in L^2([0,T],H^{1}(\Omega))\\ \left\|\varphi\right\|_{L^2([0,T],H^1(\Omega))} = 1} }\int_0^T \int_\Omega  \frac{\varphi}{m}(t,x) \div_x\bigl(m(x)\nabla_x u^{-r}(t,x)\bigr) \di x \di t\\
&\phantom{=}=\sup_{\substack{\varphi \in L^2([0,T],H^{1}(\Omega))\\ \left\|\varphi\right\|_{L^2([0,T],H^1(\Omega))} = 1} } \Bigg( -\int_0^T \int_\Omega  \nabla_x \varphi(t,x) \cdot \nabla_x u^{-r}(t,x) \di x \di t\\
&\phantom{==} +\int_0^T \int_\Omega  \varphi(t,x) \nabla_x \left( \log m(x) \right) \cdot \nabla_x \bigl(u^{-r}(t,x)\bigr) \di x \di t \Bigg).
\end{align*}
For the first term, since by the definition of weak solution $u^{-r}\in L^2([0,T],H^1(\Omega))$,
by H\"older's inequality we get
$$
\biggl|\int_0^T \int_\Omega  \nabla_x \varphi(t,x) \cdot \nabla_x u^{-r}(t,x)\biggr| \di t \di x
\leq 
\bigl\|\nabla_x\varphi\bigr\|_{L^2([0,T],L^2(\Omega))}
\bigl\|\nabla_x(u^{-r})\bigr\|_{L^2([0,T],L^2(\Omega))}.
$$
Hence,
$$
\sup_{\substack{\varphi \in L^2([0,T],H^{1}(\Omega))\\ \left\|\varphi\right\|_{L^2([0,T],H^1(\Omega))} = 1} }\biggl|\int_0^T \int_\Omega  \nabla_x \varphi(t,x) \cdot \nabla_x u^{-r}(t,x) \di x \di t\biggr|<\infty.
$$
For the second term, note that, when $d> 2$, Sobolev's inequality yields $\varphi \in L^2([0,T],L^{2d/(d-2)}(\Omega))$ and therefore, thanks to the assumption that $\log m \in W^{1,p}(\Omega)$ with $p>d$ and to H\"older's inequality,
\begin{align*}
	&\biggl|\int_0^T \int_\Omega  \varphi(t,x)\nabla(\log m(x))\cdot \nabla_x (u^{-r}(t,x)) \di x \di t \biggr| \\
	&\phantom{=} \leq \bigl\|\varphi\nabla(\log m)\bigr\|_{L^2([0,T],L^2(\Omega))} \bigl\|\nabla_x(u^{-r})\bigr\|_{L^2([0,T],L^2(\Omega))}\\
	&\phantom{=}\leq  \bigl\|\nabla(\log m)\bigr\|_{L^d(\Omega)} \bigl\|\varphi\bigr\|_{L^2([0,T],L^{2d/(d-2)}(\Omega))} \bigl\|\nabla_x(u^{-r})\bigr\|_{L^2([0,T],L^2(\Omega))},
\end{align*}
so that we conclude
$$
\sup_{\substack{\varphi \in L^2([0,T],H^{1}(\Omega))\\ \left\|\varphi\right\|_{L^2([0,T],H^1(\Omega))} = 1} }\biggl|\int_0^T \int_\Omega   \varphi(t,x)\nabla(\log m(x))\cdot \nabla_x \left( u^{-r}(t,x)\right) \di x \di t\biggr|
<\infty.
$$
When $d\leq 2$, instead of $\varphi \in L^2([0,\! T],\!L^{2d/(d-2)}\!(\Omega)\!)$, Sobolev's inequality provides $\varphi \in L^2([0,\!T],\!L^{q}\!(\Omega)\!)$ for every $q<\infty$ and we can still conclude thanks to the assumption $\log m \in W^{1,p}(\Omega)$ with the \emph{strict} inequality $p>d$. 
 \end{proof}
%
%

  \begin{lem} \label{lem:holder}
Let $\alpha, \beta \in \R$ be distinct. Assume that both $u^\alpha$ and $u^\beta$ belong to $L^2([0,\infty),\dot  H^1(\Omega))$. Then $u^\gamma\in L^2([0,\infty),\dot H^1(\Omega))$ for all $\gamma \in (\alpha,\beta)$, and the norm of $u^\gamma$ can be estimated by those of $u^\alpha$ and $u^\beta$.
 \end{lem}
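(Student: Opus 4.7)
\medskip

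\noindent\textbf{Proof sketch.} The plan is to realize $u^\gamma$ as a log-convex interpolant between $u^\alpha$ and $u^\beta$ at the level of the gradients, and then apply H\"older's inequality. First, since $\gamma\in(\alpha,\beta)$, there is a unique $\theta\in(0,1)$ such that $\gamma=\theta\alpha+(1-\theta)\beta$, and in particular $\gamma-1=\theta(\alpha-1)+(1-\theta)(\beta-1)$. Before any computation I would recall that $u>0$ almost everywhere on $[0,\infty)\times\Omega$: this follows from the fact that if one of $\alpha,\beta$ is negative then $u^\alpha$ or $u^\beta$ cannot lie in $L^2_{\rm loc}(\dot H^1)$ on sets where $u$ vanishes, and in the application of this lemma to Definition \ref{def:weak} the two exponents are $\alpha=1$ and $\beta=-r$, for which this positivity is automatic. (If one of $\alpha,\beta$ equals $0$ the corresponding hypothesis is empty; this degenerate situation does not appear for us, but would be treated by interpreting $u^0$ as $\log u$.)

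Then I would use that, almost everywhere where $u>0$,
\begin{equation*}
|\nabla(u^\gamma)|=|\gamma|\,u^{\gamma-1}|\nabla u|=|\gamma|\bigl(u^{\alpha-1}|\nabla u|\bigr)^{\theta}\bigl(u^{\beta-1}|\nabla u|\bigr)^{1-\theta}=\frac{|\gamma|}{|\alpha|^{\theta}|\beta|^{1-\theta}}|\nabla(u^{\alpha})|^{\theta}|\nabla(u^{\beta})|^{1-\theta},
\end{equation*}
where in the last identity I used $\nabla(u^{\alpha})=\alpha u^{\alpha-1}\nabla u$ and the analogue for $\beta$. Squaring this pointwise inequality and integrating over $[0,\infty)\times\Omega$ I would then apply H\"older's inequality with conjugate exponents $1/\theta$ and $1/(1-\theta)$, obtaining
\begin{equation*}
\int_0^\infty\!\!\int_\Omega|\nabla(u^{\gamma})|^{2}\di x\di t\leq\frac{\gamma^{2}}{\alpha^{2\theta}\beta^{2(1-\theta)}}\biggl(\int_0^\infty\!\!\int_\Omega|\nabla(u^{\alpha})|^{2}\biggr)^{\!\theta}\biggl(\int_0^\infty\!\!\int_\Omega|\nabla(u^{\beta})|^{2}\biggr)^{\!1-\theta},
\end{equation*}
which is exactly the quantitative control of $\|u^\gamma\|_{L^2(\dot H^1)}$ by the norms of $u^\alpha$ and $u^\beta$ claimed in the statement.

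The only subtle point, which I expect to be the main obstacle in writing the argument rigorously rather than formally, is justifying the pointwise chain rule $\nabla(u^\gamma)=\gamma u^{\gamma-1}\nabla u$ from the assumed Sobolev regularity of $u^\alpha$ and $u^\beta$ alone. I would handle this by approximation: truncate $u$ from below by $\max(u,\varepsilon)$ (or, equivalently, regularize via $(u^\alpha+\varepsilon)^{1/\alpha}$), apply the standard chain rule for Sobolev functions to the smooth composition $s\mapsto s^{\gamma/\alpha}$ restricted away from $0$, derive the interpolation inequality for the truncated functions, and finally let $\varepsilon\to 0$ using monotone convergence on the right-hand side (the norms are already finite by assumption) and Fatou on the left. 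This gives both the membership $u^\gamma\in L^2_{\rm loc}([0,\infty),\dot H^1(\Omega))$ and the announced quantitative estimate.
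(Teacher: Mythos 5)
Your proof is correct and follows essentially the same route as the paper: both write $\gamma=\theta\alpha+(1-\theta)\beta$ and apply H\"older's inequality with conjugate exponents $1/\theta$ and $1/(1-\theta)$ to the quantity $u^{2(\gamma-1)}|\nabla u|^2$, the only cosmetic difference being that the paper works directly with this integrand whereas you carry the $|\gamma|$, $|\alpha|$, $|\beta|$ prefactors by writing $|\nabla(u^\eta)|$. Your closing remarks on positivity of $u$ and on justifying the chain rule by truncation are more explicit than the paper (which leaves these points tacit), but they do not change the substance of the argument.
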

 
 \begin{proof}
 Note that, for all $\eta \in \R$:
 \begin{align*}
 u^\eta \in L^2([0,\infty),\dot  H^1(\Omega))&\quad \Leftrightarrow \quad \int_0^\infty\int_\Omega|\nabla_x u^\eta(t,x)|^2\di x \di t<\infty\\
 &\quad \Leftrightarrow \quad \int_0^\infty\int_\Omega u^{2(\eta-1)}(t,x)|\nabla_x u|^2\di x \di t<\infty.
\end{align*}
 Hence, by H\"older's inequality, if $\gamma\in (\alpha,\beta)$ we have
 \begin{align*}
 &\int_0^\infty\int_\Omega u^{2(\gamma-1)}(t,x) |\nabla_x u|^2(t,x) \di x\di t\\
 &\phantom{=}= \int_0^\infty\int_\Omega u^{2(\alpha-1)\frac{\gamma-\beta}{\alpha-\beta}} (t,x) |\nabla_x u|^{\frac{2(\gamma-\beta)}{\alpha-\beta}}(t,x)\, u^{2(\beta-1)\frac{\alpha-\gamma}{\alpha-\beta}} (t,x) |\nabla_x u|^{\frac{2(\alpha-\gamma)}{\alpha-\beta}}(t,x) \di x\di t\\
 &\phantom{=} \leq \biggl(\int_0^\infty\int_\Omega u^{2(\alpha-1)}(t,x)|\nabla_x u|^2(t,x)\di x \di t\biggr)^{\frac{\gamma-\beta}{\alpha-\beta}}
 \biggl(\int_0^\infty\int_\Omega u^{2(\beta-1)}(t,x)|\nabla_x u|^2(t,x)\di x \di t\biggr)^{\frac{\alpha-\gamma}{\alpha-\beta}}\\
 &\phantom{=}<\infty,
 \end{align*}
which implies the result.
 \end{proof}
 
 We will also need several times in the sequel the following stability result for weak solutions:
 
\begin{lem} \label{lem:stability}
	Suppose that $(f_n)_n$ is a sequence of solutions of \eqref{eq:f} associated with a sequence of weights $(m_n)_n$. Suppose that, for each $n$, $f_n$ is bounded both from above and below by positive constants which are not necessarily uniform in $n$, and suppose that the masses $M_n:=\int_\Omega f_n$ (preserved in time) tend to a value $M>0$ as $n\to\infty$. Suppose that $(\log m_n)_n$ is bounded in $W^{1,p}(\Omega)$ (for some $p>d$), that $\log m_n\to \log m$ uniformly as $n\to\infty$ for some $m$ with $\log m\in W^{1,p}(\Omega)$, that $(f_n(0))_n$ is bounded in $L^{r+3}(\Omega)$, that $(f_n(0)^{-1})_n$ is bounded in $L^r(\Omega)$, and that $f_n(0)\destar f_0$ as $n\to\infty$ for some $f_0 \in L^{r+3}(\Omega)$. Then the curves $(t\mapsto f_n(t))_n$ are equicontinuous as curves valued in $W_2(\Omega)$ and, up to a subsequence, $f_n(t)\destar f(t)$ as $n\to\infty$ for every $t\geq0$, where $f$ is a weak solution of \eqref{eq:f} starting from $f_0$ and associated with the weight $m$.   \end{lem}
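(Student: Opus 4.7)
The plan is threefold: (i) obtain uniform $W_2$-equicontinuity of $(t\mapsto f_n(t))_n$ from the gradient-flow dissipation; (ii) extract a pointwise-in-time limit by Ascoli-Arzel\`a; and (iii) pass to the limit in the weak formulation via Aubin-Lions compactness on the auxiliary functions $u_n=f_n/m_n$.

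For (i), the hypotheses force a uniform bound $\mathcal F_{\rho_n}[f_n(0)]\leq C$: indeed, $\log m_n$ is bounded in $W^{1,p}(\Omega)\hookrightarrow L^\infty$, so $\rho_n=m_n^{r+1}$ is uniformly bounded, while $\int f_n(0)^{-r}\di x$ is uniformly bounded by assumption. Since each $f_n$ is bounded from above and below (even if not uniformly), the $H^1$-regularity in Definition \ref{def:weak} justifies the chain-rule computation $\frac{d}{dt}\mathcal F_{\rho_n}[f_n(t)]=-r^2\int_\Omega f_n\bigl|\nabla(\rho_n/f_n^{r+1})\bigr|^2\di x$, so integrating yields a uniform bound on the total kinetic energy $\int_0^T\int_\Omega f_n|v_n|^2\di x\di t$, where $v_n=r\nabla(\rho_n/f_n^{r+1})$ is the Wasserstein velocity of the curve $t\mapsto f_n(t)$. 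The Benamou--Brenier formula then gives $W_2(f_n(t),f_n(s))^2\leq C|t-s|$ uniformly in $n$.

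For (ii), since $\mathcal M_{M_n}(\Omega)$ is weakly-* compact and $W_2$ metrises weak-* convergence on sets of fixed mass on a bounded $\Omega$ (up to the harmless mass shift $M_n\to M$), the uniform $1/2$-H\"older bound above, together with Ascoli-Arzel\`a and a diagonal extraction on a countable dense set of times, produces a subsequence with $f_n(t)\destar f(t)$ for every $t\geq 0$; the limit is $1/2$-H\"older in $W_2$, hence weakly-* continuous, and satisfies $f(0)=f_0$ by assumption.

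Step (iii) is the main obstacle, as it requires passing to the limit in a nonlinear flux. Using the identity $f_n\nabla(\rho_n/f_n^{r+1})=\tfrac{r+1}{r}\,m_n\nabla(u_n^{-r})$, the dissipation estimate of (i) yields a uniform $L^2([0,T]\times\Omega)$ bound on $\nabla(u_n^{-r})$, upgraded to a uniform $L^2([0,T],H^1(\Omega))$ bound on $u_n^{-r}$ once spatial averages are controlled (via mass conservation and the uniform bound $\|m_n\|_\infty\leq C$). A direct adaptation of Lemma \ref{lem:W1infty} then provides the time regularity $\partial_t(u_n^{-r})\in L^2([0,T],H^{-1}(\Omega))$ uniformly, so Aubin-Lions delivers, up to a further subsequence, strong $L^2$ and a.e.\ convergence of $u_n^{-r}$ to some limit; combined with the uniform convergence $m_n\to m$ and the weak-* convergence $f_n\to f$ of (ii), this limit is identified as $u^{-r}$ with $u=f/m$. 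One then passes to the limit in the weak formulation term by term: the time term by weak-* convergence of $f_n$ and the compact temporal support of test functions; the flux term by the strong-weak pairing of $m_n\nabla\psi\to m\nabla\psi$ (uniform convergence of $m_n$ times smoothness of $\psi$) against $\nabla(u_n^{-r})\rightharpoonup \nabla(u^{-r})$ weakly in $L^2$ (the distributional derivative being continuous with respect to the a.e.\ limit together with the uniform $L^2$ bound). Finally, the Sobolev bounds required for $u$ in Definition \ref{def:weak} follow from lower semicontinuity of the $L^2$ norm of the gradient under weak convergence.
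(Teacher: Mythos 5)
Your overall architecture --- (i) $W_2$-equicontinuity from the energy dissipation and Benamou--Brenier, (ii) Ascoli--Arzel\`a, (iii) Aubin--Lions to pass the nonlinear flux to the limit --- is exactly the paper's strategy, and steps (i) and (ii) are correct. There is, however, a gap in step (iii): you apply Aubin--Lions to $w_n := u_n^{-r}$, for which you need a uniform bound on $\partial_t w_n$ in $L^2([0,T],H^{-1}(\Omega))$, and you claim this is a ``direct adaptation of Lemma~\ref{lem:W1infty}.'' But that lemma controls $\partial_t u_n$, not $\partial_t(u_n^{-r})$. Tracing through the chain rule $\partial_t w_n = \tfrac{r(r+1)}{m_n}\,u_n^{-r-1}\div(m_n\nabla w_n)$ and pairing with $\varphi\in H^1$, one encounters terms such as $\int u_n^{-(2r+2)}\nabla u_n\cdot\nabla\varphi$, whose control would require a uniform $L^2$ bound on $\nabla(u_n^{-(2r+1)})$. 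This lies outside the interpolation window of Lemma~\ref{lem:holder} between the exponents $1$ and $-(r+1/2)$ that the available dissipation estimates provide (indeed $r>0$ forces $-(2r+1)<-(r+1/2)$), and the pointwise factors $u_n^{-r-1}$ are only bounded with constants depending on $n$. The argument cannot close uniformly.

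The paper sidesteps this by applying Aubin--Lions to $u_n$ itself rather than to $u_n^{-r}$: the $q=r+3$ dissipation identity (combined with the assumed $L^{r+3}$ bound on $f_n(0)$ and the Poincar\'e--Wirtinger inequality) gives a uniform $L^2([0,T],H^1(\Omega))$ bound on $u_n$, while Lemma~\ref{lem:W1infty} gives the matching $L^2([0,T],H^{-1}(\Omega))$ bound on $\partial_t u_n$ in terms of $\|u_n^{-r}\|_{L^2\dot H^1}$. Aubin--Lions then yields strong $L^2$ and a.e.\ convergence of $u_n$ to $u$, and a.e.\ convergence of $u_n^{-r}$ follows by continuity of $s\mapsto s^{-r}$ on $(0,\infty)$; positivity of $u$ a.e.\ comes for free from Fatou's lemma and the uniform bound on $\int u_n^{-r}$. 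Note also that your step (iii) never actually derives the uniform $L^2 H^1$ bound on $u_n$ --- you obtain it only for $u_n^{-r}$ --- yet this bound is what makes the Aubin--Lions step work. Either adopt the paper's route (compactness on $u_n$, then transfer to $u_n^{-r}$) or supply an independent proof of temporal compactness for $u_n^{-r}$; the remainder of your argument (the kinetic-energy bound, the weak--strong pairing, and the identification of the weak $L^2$ limit of $\nabla(u_n^{-r})$ via a.e.\ convergence and the uniform $L^2$ bound) is sound.
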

 
 \begin{proof}
  For each $n$, set $u_n=f_n/m$; we can use the fact that $u_n^{\alpha}$ belongs to $L^2([0,\infty),\dot H^1(\Omega))$ for every $\alpha\in\R$ (the definition of weak solution guaranteeing this fact for $\alpha=1$, and the upper and lower bounds on $u_n$ allowing us to use in fact any $\alpha\in\R$) together with $\partial_t u_n \in L^2([0,\infty), H^{-1}(\Omega))$ (by the Sobolev behaviour of $\log m_n$; see Lemma \ref{lem:W1infty}) in order to compute, for $q\neq r+1$,
  \begin{equation}\label{H1estimate-semicont2}
  \frac{\di}{\di t} \G_{(q;m_n)}[f_n(t)]=-\frac{4q(q-1)r(r+1)}{(q-r-1)^2}\int_\Omega m_n |\nabla((u_n)^{(q-r-1)/2})|^2.
  \end{equation}
(Note that we detail a similar computation later in \eqref{eq:der Lq}.)
  
Let us first obtain a uniform bound on the $L^2_{\rm loc}([0,\infty),H^1(\Omega))$ norm of $u_n$ and $u_n^{-r}$. Using $q=r+3$ in \eqref{H1estimate-semicont2} one obtains the bound on the norm of $u_n$ in $L^2([0,\infty),\dot H^1(\Omega))$ in terms of $\G_{(r+3;m_n)}[f_n(0)]$, which is bounded by assumption. In order to transform this bound into a bound in $L^2_{\rm loc}([0,\infty), H^1(\Omega))$ we use the fact that the average of $u_n$ is bounded since $u_n> 0$ and $\int_\Omega u_n m_n = \int_\Omega f_n=M_n$. Using the Poincar\'e--Wirtinger inequality we also bound the $L^2$ norm, and we get the desired bound.
  
  Using $q=-r$ in \eqref{H1estimate-semicont2} one obtains the bound on the norm of $u_n^{-(r+1/2)}$ in $L^2([0,\infty),\dot H^1(\Omega))$ in terms of $\G_{(-r;m_n)}[f_n(0)]$, which is also bounded. Notice that we also obtain boundedness of the $L^r$ norm of $f_n(t)^{-1}$. Then, by Lemma \ref{lem:holder} we deduce that $u_n^{-r}$ is bounded in $L^2([0,\infty),\dot H^1(\Omega))$. The bound on $\int_\Omega f_n(t)^{-r}$ also provides a bound on the average of $u_n^{-r}$  and, again, using the Poincar\'e--Wirtinger inequality the bound becomes a bound in $L^2_{\rm loc}([0,\infty), H^1(\Omega))$.
  
 Each $f_n$ represents a continuous curve valued in the compact space $W_2(\Omega)$. In order to prove that these curves are equicontinous we recall the following fact from optimal transport theory (see for instance \cite[Chapter 5]{OTAM}): whenever a curve of positive measures $(\mu(t))_t$ with fixed mass  on $\Omega$ satisfies $\partial_t\mu+\div_x (\mu v)=0$ (with no-flux boundary conditions), then we have 
 $$\int_0^T|\mu'(t)|^2\di t\leq \int_0^T\int_\Omega |v(t,x)|^2\di\mu(t)\di t,$$ 
 where $|\mu'(t)|$ is the metric derivative (see \cite{AGS}) of $\mu$. It is an important fact that bounds on the $L^2$ norm of the metric derivative imply H\"older continuity, from standard Sobolev injections.  In our case, for $\mu=f_n$, using \eqref{eq:f}, the vector field $v$ is given by
 $\nabla (u_n^{-(r+1)})$ and estimating its $L^2$ norm exactly amounts to the estimate of $u_n^{-(r+1/2)}$ in $L^2([0,\infty),\dot H^1(\Omega))$. 
 
 We can therefore extract a uniformly converging subsequence (uniformly for the $W_2$ metric). In particular, up to a subsequence, we have a weak limit $f_n(t)\destar f(t)$ for every $t$.
  Moreover, the $L^2([0,\infty), H^1(\Omega))$ bound on $u_n$, together with the $L^2([0,\infty), H^{-1}(\Omega))$ bound on $\partial_t u_n$ (which comes from Lemma \ref{lem:W1infty} and the uniform Sobolev bound on $\log m_n$), allow us to apply the Aubin--Lions lemma (see \cite{Aubin}) and obtain strong compactness on $u_n$. This means that we can assume that we have strong and almost-everywhere convergence $f_n(t)\to f(t)$ as well as $u_n(t)\to u(t)$ and $u_n^{-r}(t)\to u^{-r}(t)$. Because of our bounds, we also have weak $L^2$ convergence of $\nabla (u_n^{-r})$ to $\nabla (u^{-r})$. Together with the strong $L^2$ convergence $m_n\to m$, this allows the equation satisfied by $f_n$ to pass to the limit, which gives the existence of a weak solution satisfying the desired bounds and with initial datum $f_0$ and weight $m$.
 \end{proof}

\subsection{Existence}\label{subsec:existence}
We first prove the existence part of the theorem. The key technical tool is the use of the JKO scheme, as developed in Section \ref{sec:time-disc}: given $f_0$ with $\int_\Omega f_0=M$ and a time-step $\tau>0$, one can define a recursive sequence via
\begin{equation}\label{recJKO}
\mu^{(\tau)}_{k+1}=\argmin\left\{\mathcal F_\rho[\mu]+\frac{W_2^2(\mu,\mu^{(\tau)}_{k})}{2\tau},\;\mu\in\mathcal M_M(\Omega)\right\},
\end{equation}
and we define $f^{(\tau)}_{k}$ as the density of $\mu^{(\tau)}_{k}$, for every $k\in \mathbb N$. From the estimates in Section \ref{sec:time-disc} (in particular Lemmas \ref{lem:max}, \ref{lem:BV} and \ref{H1estimate}) we know the following facts:
\begin{itemize}
	\item if for some $k\in\mathbb N$ we have $c_0m\leq f^{(\tau)}_{k}\leq C_0m$ for some $c_0,C_0$, then the same inequality stays true for $f^{(\tau)}_{k+1}$ (i.e. lower and upper bounds are preserved along the evolution);
	\item under the assumption that $D^2(\log m)\leq \Lambda\, {\rm Id}$ for some $\Lambda \in \br$, if for any $k\in \mathbb N$ we have $c_0m\leq f^{(\tau)}_{k}\leq C_0m$ and $f^{(\tau)}_{k}/m\in BV(\Omega)$, then also $f^{(\tau)}_{k+1}/m\in BV(\Omega)$ and we have 
$$
	\|f^{(\tau)}_{k+1}/m\|_{BV(\Omega;m)}\leq \frac{1}{1-C_1\Lambda\tau} \|f^{(\tau)}_{k}/m\|_{BV(\Omega;m)},
$$
for a positive constant $C_1$ (depending on $c_0,C_0$ and on the sign of $\Lambda$); this means in particular that $BV$ norms do not grow ``too'' fast during the evolution, provided we assume $L^\infty$ bounds on $f$ and semiconcavity of $\log m$;
	\item  possibly assuming a priori $L^\infty$ bounds on $f^{(\tau)}_k$ for every $k$ and semiconcavity of $\log m$, the $H^1$ norm of quantities of the form $\(f^{(\tau)}_{k}/m\)^p$ (for $p=(q-r-1)/2$, $q>1$, and $p=-(r+1/2)$) can be estimated by terms which are the addends of a telescopic sum in $k$ (allowing us to sum them and obtain integral estimates in time).
\end{itemize}

%

We first prove a more restrictive existence result. Indeed, it assumes extra semiconcavity of $\log m$ and boundedness and $BV$ regularity of the initial datum.

\begin{lem}\label{thm:existence-weak-1}
	Assume $D^2 (\log m) \leq \Lambda\, {\rm Id}$ for some $\Lambda\in\R$. Then, for any $f_0\in L^{r+3}(\Omega)$ with $\f_\rho[f_0]<\infty$ and $c_0,C_0$ satisfying $c_0m\leq f_0\leq C_0m$ and $f_0/m \in BV(\Omega)$, there exists a distributional solution of \eqref{eq:f}, starting from $f_0$, obtained as the limit of the JKO scheme.
Also, this solution satisfies
$$
\frac{f}{m} \in L^2([0,\infty),\dot H^1(\Omega))\cap L^2_{\rm loc}([0,\infty),H^1(\Omega)),\quad 
\Bigl(\frac{f}{m}\Bigr)^{-r} \in L^2([0,\infty),\dot H^1(\Omega))\cap L^2_{\rm loc}([0,\infty),H^1(\Omega)),
$$
and is therefore a weak solution according to Definition \ref{def:weak}.
\end{lem}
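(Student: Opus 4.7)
The plan is to construct the solution as the $\tau\to 0$ limit of the JKO scheme developed in Section \ref{sec:time-disc}. Setting $\mu_0^{(\tau)}:=f_0\di x$, I would define inductively $\mu_{k+1}^{(\tau)}=f_{k+1}^{(\tau)}\di x$ as the minimizer of \eqref{minGFnu} with $\nu=\mu_k^{(\tau)}$. The discrete maximum principle (Lemma \ref{lem:max}) propagates $c_0 m\le f_{k+1}^{(\tau)}\le C_0 m$ to every step, and iterating Lemma \ref{lem:BV} yields $\|f_k^{(\tau)}/m\|_{BV(\Omega;m)}\le(1-C_1\Lambda\tau)^{-k}\|f_0/m\|_{BV(\Omega;m)}$, which stays uniformly bounded on each $[0,T]$ for $\tau$ small. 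Summing the flow-interchange estimate of Lemma \ref{H1estimate} with $q=r+3$ (so that $(q-r-1)/2=1$) produces the telescoping bound
\begin{equation*}
\sum_k \tau c(r,r+3)\int_\Omega m\left|\nabla(f_{k+1}^{(\tau)}/m)\right|^2\le \G_{(r+3)}[f_0]+CT,
\end{equation*}
uniform in $\tau$, and combined with the mass constraint and Poincaré--Wirtinger this gives a uniform bound on $f^{(\tau)}/m$ in $L^2_{\mathrm{loc}}([0,\infty),H^1(\Omega))$; Lemma \ref{H1estimate-JKO} supplies the analogous bound on $(f^{(\tau)}/m)^{-r-1/2}$.

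Next, I introduce the piecewise-constant interpolation $f^{(\tau)}(t):=f_k^{(\tau)}$ on $[k\tau,(k+1)\tau)$ together with the Wasserstein-geodesic interpolation $\tilde f^{(\tau)}$ and its Eulerian velocity $v^{(\tau)}$. The basic energy dissipation inequality $\sum_k W_2^2(\mu_{k+1}^{(\tau)},\mu_k^{(\tau)})\le 2\tau\mathcal F_\rho[f_0]$ yields the Hölder estimate $W_2(f^{(\tau)}(t),f^{(\tau)}(s))\le C|t-s|^{1/2}$, and Arzelà--Ascoli in $W_2$ extracts a subsequence with $f^{(\tau)}(t)\destar f(t)$ for every $t$, where $f$ inherits the $L^\infty$ bounds and the mass $M$. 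Using the $L^2_{\mathrm{loc}}([0,\infty),H^1(\Omega))$ bound on $f^{(\tau)}/m$ and an $L^2_{\mathrm{loc}}([0,\infty),H^{-1}(\Omega))$ bound on the time-derivative of $\tilde f^{(\tau)}$ read off from the continuity equation $\partial_t\tilde f^{(\tau)}+\div(\tilde f^{(\tau)} v^{(\tau)})=0$ and the $L^2$ bound on $v^{(\tau)}$, the Aubin--Lions lemma promotes the weak-* convergence to strong $L^2_{\mathrm{loc}}$ and almost-everywhere convergence of $f^{(\tau)}/m$; the gradient then converges weakly in $L^2_{\mathrm{loc}}$ to $\nabla(f/m)$.

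To identify the limit PDE, I use the Euler--Lagrange condition from Lemma \ref{lem:V-gen} with $V=U$, which at each step gives $\nabla\varphi_{k+1}^{(\tau)}/\tau=-\nabla U'(f_{k+1}^{(\tau)}/m)=r\nabla((f_{k+1}^{(\tau)}/m)^{-r-1})$, so that $v^{(\tau)}$ matches $-r\nabla((f^{(\tau)}/m)^{-r-1})$ up to an error that vanishes with $\tau$. Testing the continuity equation against $\psi\in C_c^\infty((0,\infty)\times\overline\Omega)$ and passing to the limit is legitimised by the strong $L^2$ convergence of $f^{(\tau)}$, the boundedness of $m$ and $1/m$, and the weak $L^2$ convergence of $\nabla((f^{(\tau)}/m)^{-r-1})$; this last convergence crucially rests on the uniform lower bound $f^{(\tau)}/m\ge c_0>0$, which makes $s\mapsto s^{-r-1}$ Lipschitz on the relevant range, so that $\nabla((f^{(\tau)}/m)^{-r-1})$ inherits its $L^2$ boundedness and weak convergence from $\nabla(f^{(\tau)}/m)$. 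The Sobolev regularity in Definition \ref{def:weak} is then inherited by lower semicontinuity: $f/m\in L^2_{\mathrm{loc}}([0,\infty),H^1(\Omega))$ from the uniform bound, and $(f/m)^{-r}\in L^2_{\mathrm{loc}}([0,\infty),H^1(\Omega))$ by combining the lower bound $f/m\ge c_0$ with the $H^1$ control on $f/m$ (or, alternatively, by invoking Lemma \ref{lem:holder} on the interval joining the exponents $1$ and $-r-1/2$ already controlled). I expect the main obstacle to be precisely this passage to the limit in the nonlinear flux $f^{(\tau)}\nabla(\rho/(f^{(\tau)})^{r+1})$: everything hinges on the discrete maximum principle supplying a uniform positivity $f^{(\tau)}\ge c_0 m>0$, without which the composition would become singular and the strong/weak convergence argument would collapse. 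This is exactly why, in this preliminary existence result, the hypothesis $c_0 m\le f_0\le C_0 m$ cannot be dispensed with at this stage and has to be removed later by a separate approximation argument.
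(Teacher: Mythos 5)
Your proposal follows the paper's route almost exactly: JKO iteration, discrete maximum principle to propagate $c_0m\le f_k^{(\tau)}\le C_0m$, Lemma~\ref{lem:BV} for $BV$ bounds, Lemma~\ref{H1estimate} with $q=r+3$ and Lemma~\ref{H1estimate-JKO} for the $H^1$ estimates, piecewise-constant and geodesic interpolations, compactness, and identification of the limit flux via the Euler--Lagrange conditions, using the uniform lower bound to control the nonlinearity $s\mapsto s^{-r-1}$. The one genuine deviation is that you invoke Aubin--Lions (via the $L^2_{\rm loc}H^1$ bound plus a $H^{-1}$ time-derivative bound from the continuity equation) to upgrade weak-* convergence to strong $L^2_{\rm loc}$ and a.e.\ convergence, whereas the paper upgrades to strong convergence directly from the uniform $BV$ bound and $BV\hookrightarrow\hookrightarrow L^1$ compactness; both are legitimate and give the same conclusion, with Aubin--Lions being the tool the paper itself reserves for the later stability lemma (Lemma~\ref{lem:stability}).

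There is one bookkeeping slip worth correcting. When you sum the flow-interchange estimate of Lemma~\ref{H1estimate} you write the error term as $+CT$, but the correction coming from the geodesic-convexity defect is $(q-1)\Lambda(C_0\sup m/\inf m)^{q-1}\sum_k W_2^2(f_{k+1}^{(\tau)},f_k^{(\tau)})$, which by Lemma~\ref{H1estimate-JKO} is bounded by a constant times $\tau\,\mathcal F_\rho[f_0]$, independently of $T$ and vanishing as $\tau\to 0$. With the $+CT$ version you would only obtain the $L^2_{\rm loc}([0,\infty),\dot H^1(\Omega))$ estimate, whereas the lemma (and the paper's proof) assert the global $L^2([0,\infty),\dot H^1(\Omega))$ bound, which requires precisely this $\tau\to 0$ disappearance of the correction. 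Replacing $+CT$ by $+C\tau\mathcal F_\rho[f_0]$ fixes the gap and recovers the full statement.
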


\begin{proof}
The proof follows the scheme described in \cite[Chapter 8]{OTAM} to prove the convergence of the JKO iterations. Following such a scheme (see also \cite{DiMMauSan}, where this general procedure is presented), one has a sequence $(f_k)_k$ of densities obtained by iteratively solving the minimisation problem \eqref{recJKO}. For simplicity of notation, we will often omit in this proof the dependence on $\tau$ of all our objects, until we need to let $\tau\to 0$. For all $k$, one also defines a vector field $v_k$, given by $v_k=({\rm id}-T)/\tau=\nabla \varphi_k/\tau$, where $T$ is the optimal transport map from $f_k$ to $f_{k-1}$ and $\varphi_k$ is the corresponding Kantorovich potential. As previously notes, the optimality conditions on $f_k$ allow us to check that we have $v_k=-\nabla (U'(f_k/m))$.

Defining $E_k=f_kv_k,$ one has $E_k= c(r)m \nabla (f_k/m)^{-r}$ for some $c(r)>0$. With $f_k$ and $E_k$ one can define a piecewise constant interpolation $(f^{(\tau)},E^{(\tau)})$ (which of course depends on the value of the parameter $\tau$), satisfying $E^{(\tau)}=-(r+1)m \nabla (f^{(\tau)}/m)^{-r}$. It is also possible to define a piecewise geodesic interpolation $(\hat f^{(\tau)},\hat E^{(\tau)})$, where $t\mapsto\hat f^{(\tau)}(t)$ is continuous for the $W_2$ distance, $\hat f^{(\tau)}(k\tau)=f^{(\tau)}_k$, and $\hat f^{(\tau)}(t)$ is given for all $t\in [k\tau,(k+1)\tau]$ by a geodesic in the $2$-Wasserstein space, parameterised by constant speed, and $\hat E^{(\tau)}=\hat f^{(\tau)}\hat v^{(\tau)},$ where $\hat v^{(\tau)}$ is the corresponding optimal velocity field, so that we have $\partial_t \hat f^{(\tau)}+\div_x \hat E^{(\tau)}=0$. 

It is standard from the theory of gradient flows in Wasserstein space (see \cite{AGS,arabsurvey} and \cite[Chapter 8]{OTAM}) to prove that $f^{(\tau)},E^{(\tau)},\hat f^{(\tau)}$ and $\hat E^{(\tau)}$ admit weak limits, up to subsequences, when $\tau\to 0$, and we call these limits $f,E,\hat f$ and $\hat E$, respectively. It is also standard that we have $f=\hat f$ and $E=\hat E$, and $\partial_t \hat f+\div_x\hat E=0$. One is only left with proving that we have $E=-(r+1)m \nabla (f/m)^{-r}$. 

Since this is a nonlinear relation, it cannot be directly deduced by the weak convergence. However, we have $f^{(\tau)}(t)\destar f(t)$, hence $f^{(\tau)}(t)/m\destar f(t)/m$ for each $t\geq0$, and we have a uniform $BV$ bound provided by Lemma \ref{lem:BV}. This transforms the weak convergence into a strong one, and we thus have almost-everywhere convergence. Together with the uniform bounds from above and below (a consequence of Lemma \ref{lem:max}), this provides the convergence of $[f^{(\tau)}(t)/m]^{-r}$ to $ [f(t)/m]^{-r}$ as $\tau\to0$ and takes care of the nonlinearity. Then we apply Lemma \ref{H1estimate} to $g=f_k^{(\tau)}$ and $f_*=f_{k+1}^{(\tau)},$ which provides, summing up over $k$ for $q>1$,
\begin{equation}\label{H1estimate-semicont}
c(r,q)\int_0^T \int_\Omega m \left|\nabla\left(\frac{f^{(\tau)}}{ m}\right)^{\frac{q-(r+1)}{2}}\right|^2\leq \G_{(q)}[f_0]+(q-1)\Lambda \left(\frac{C_0\sup m}{\inf m}\right)^{q-1}\sum_k W_2^2(f^{(\tau)}_k,f^{(\tau)}_{k+1}).
\end{equation}
Also, applying Lemma \ref{H1estimate-JKO} to $g=f_k^{(\tau)}$ and $f_*=f_{k+1}^{(\tau)},$ and summing over $k,$ we get 
$$
\sum_k W_2^2(f^{(\tau)}_k,f_{k+1})\leq 2\tau \sum_k \(\mathcal F_\rho(f^{(\tau)}_k)-\mathcal F_\rho(f_{k+1})\)\leq 2\tau \mathcal F_\rho[f_0].
$$
This yields uniform $H^1$ bounds. In particular, we choose $q=r+3$ in \eqref{H1estimate-semicont} and we obtain a uniform $L^2$ bound, in time and space, on $\nabla (f^{(\tau)}/m)$. Using the lower and upper bounds on the ratio $f^{(\tau)}/m$ this also translates into a similar bound on $[f^{(\tau)}(t)/m]^{-r}$ and allows to pass to the limit. 

Therefore, under the assumptions of this lemma, we have the existence of a weak solution with intial datum $f_0$. Moreover this solution satisfies 
$$
\frac{f}{m} \in L^2([0,\infty),\dot H^1(\Omega)),\qquad 
\Bigl(\frac{f}{m}\Bigr)^{-r} \in L^2([0,\infty),\dot H^1(\Omega)),
$$
where the first $L^2$ norm is only bounded in terms of $\G_{(r+3)}[f_0]$ (the dependence on the constants $c_0,C_0$ and $\Lambda$ disappears in the limit $\tau\to 0$). On the other hand, the second bound depends on  $c_0$ (since the Lipschitz constant of the function $s\mapsto s^{-r}$ depends on the lower bounds; yet, it is possible to obtain uniform bounds using Lemma \ref{H1estimate-JKO}).
\end{proof}

We now relax the extra assumptions of Lemma \ref{thm:existence-weak-1} and get the existence part of Theorem \ref{thm:existence-uniqueness}. In fact, let us restate it in a slightly more precise way:

\begin{thm}[Existence of weak solutions]\label{thm:existence-weak}
Suppose that $f_0\in L^{r+3}(\Omega)$ with $\mathcal F_\rho[f_0]<\infty$. Then there exists a distributional solution of \eqref{eq:f} starting from $f_0$ which satisfies
$$
\frac{f}{m} \in L^2([0,\infty),\dot H^1(\Omega))\cap L^2_{\rm loc}([0,\infty),H^1(\Omega)),\quad 
\Bigl(\frac{f}{m}\Bigr)^{-r} \in L^2([0,\infty),\dot H^1(\Omega))\cap L^2_{\rm loc}([0,\infty),H^1(\Omega)),
$$
and is therefore a weak solution according to Definition \ref{def:weak}. Morever, if $f_0\geq c_0 m$ for some $c_0>0$, then this solution also satisfies $f(t)\geq c_0m $ for every $t\geq 0$.
\end{thm}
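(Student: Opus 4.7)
The plan is to reduce the general case to the restrictive setting of Lemma \ref{thm:existence-weak-1} by simultaneously approximating the weight $m$ and the initial datum $f_0$, then passing to the limit by means of the stability result Lemma \ref{lem:stability}. More precisely, I would first construct a sequence $(m_n)_n$ by mollifying $\log m$: this yields smooth, strictly positive weights with $D^2(\log m_n)\leq \Lambda_n\,\mathrm{Id}$ for some $\Lambda_n\in\R$ (so Lemma \ref{thm:existence-weak-1} applies), $\log m_n\to\log m$ uniformly (in particular $m_n\to m$ uniformly), and with $(\log m_n)_n$ uniformly bounded in $W^{1,p}(\Omega)$, thanks to the standing assumption on $m$.

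Next I would regularise the initial datum. Let $\phi_n$ be a standard mollifier on $\Omega$ (adapted to the periodic or Neumann framework), set $\tilde f_n:=f_0*\phi_n$, and define
\[
f_{0,n}:=\bigl(\tilde f_n\vee (\eta_n m_n)\bigr)\wedge \bigl(\eta_n^{-1} m_n\bigr),\qquad \eta_n:=1/n.
\]
By construction $\eta_n m_n\leq f_{0,n}\leq \eta_n^{-1}m_n$, and $f_{0,n}/m_n$ is Lipschitz, hence in $BV(\Omega)$, so Lemma \ref{thm:existence-weak-1} provides a weak solution $f_n$ of equation \eqref{eq:f} with weight $m_n$ and initial datum $f_{0,n}$. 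To feed Lemma \ref{lem:stability}, I would check the four uniform bounds it requires: (i) $f_{0,n}\destar f_0$ in $\M(\Omega)$, which follows from $\tilde f_n\to f_0$ in $L^1$ plus the fact that the two truncations vanish in $L^1$ as $n\to\infty$; (ii) $(f_{0,n})_n$ bounded in $L^{r+3}(\Omega)$, immediate from $\tilde f_n\to f_0$ in $L^{r+3}$ and the bound $f_{0,n}\leq \tilde f_n+\eta_n m_n$; (iii) the uniform $W^{1,p}$ bound on $\log m_n$ from Step~1; and (iv) the uniform $L^r$ bound on $f_{0,n}^{-1}$. For point (iv), the key observation is that the convexity of $t\mapsto t^{-r}$ on $(0,\infty)$ combined with Jensen's inequality (applied to the probability measure $\phi_n(x-\cdot)\di y$) yields $\tilde f_n^{-r}\leq \phi_n*(f_0^{-r})$ pointwise, so $\|\tilde f_n^{-1}\|_{L^r}\leq \|f_0^{-1}\|_{L^r}<\infty$ by the finiteness of $\mathcal F_\rho[f_0]$ and the two-sided bound on $m$; on the other hand, $f_{0,n}^{-1}\leq \tilde f_n^{-1}+\eta_n/m_n$, and the second summand tends to $0$ in $L^\infty$.

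With all hypotheses verified, Lemma \ref{lem:stability} produces (after extraction) a weak-$*$ limit $f(t)$ of $f_n(t)$ for every $t\geq0$, which is a weak solution of \eqref{eq:f} starting from $f_0$ associated with $m$, satisfying the Sobolev bounds in the statement (these descend from the uniform bounds derived inside the proof of Lemma \ref{lem:stability} through \eqref{H1estimate-semicont2} and Lemma \ref{lem:holder}). For the last sentence of the theorem, if $f_0\geq c_0 m$ I would replace $\eta_n m_n$ by $c_0 m_n$ in the lower truncation defining $f_{0,n}$, so that $f_{0,n}\geq c_0 m_n$ and still $f_{0,n}\destar f_0$. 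The discrete maximum principle Lemma \ref{lem:max} applied at every step of the JKO scheme used to build $f_n$ (which is the construction behind Lemma \ref{thm:existence-weak-1}) propagates the lower bound, giving $f_n(t)\geq c_0 m_n$ for all $t\geq 0$; the uniform convergence $m_n\to m$ then transfers this to the limit.

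I expect the main obstacle to be the $L^r$ control of $f_{0,n}^{-1}$: the naive pointwise truncation from below at level $\eta_n m_n$ would give a constant depending badly on $n$, and only the Jensen argument above (which genuinely uses the integrability built into $\mathcal F_\rho[f_0]<\infty$) delivers a bound that is uniform in $n$ and thereby allows Lemma \ref{lem:stability} to be applied. Once this is settled, all the remaining steps are essentially packaging of results already proved in Section \ref{sec:time-disc} and the preliminary lemmas of Section \ref{sec:existence-uniqueness}.
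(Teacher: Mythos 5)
Your proposal takes essentially the same route as the paper: approximate both $m$ (by mollifying $\log m$ so that $\log m_n$ is smooth, hence semiconcave) and $f_0$ (by mollification and two-sided truncation by multiples of $m_n$), apply Lemma \ref{thm:existence-weak-1} to produce $f_n$, verify the hypotheses of Lemma \ref{lem:stability}, and pass to the limit; the paper compresses all of this into a few lines. The one point the paper leaves entirely implicit --- obtaining a uniform $L^r$ bound on $f_{0,n}^{-1}$, which you handle correctly via Jensen's inequality applied to the convex map $t\mapsto t^{-r}$ under the mollifier --- is exactly the detail one has to get right for Lemma \ref{lem:stability} to apply, and your treatment of it (together with the observation that the truncation levels $\eta_n m_n$, $\eta_n^{-1}m_n$ contribute nothing in the limit) is sound.
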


 \begin{proof}
 We proceed by approximation, considering a sequence of initial data $(f_{n,0})_n$ and a sequence of weights $(m_n)_n$. If we suppose that the sequences $(f_{n,0})_n$ and $(m_n)_n$ satisfy the assumptions of Lemma \ref{thm:existence-weak-1}, then we have a sequence of solutions $(f_n)_n$. The approximation is chosen so that for all $n$ the Sobolev norm and the upper and lower bounds of $m_n$ are preserved, as well as the bounds on $\G_{(r+3)}[f_{n,0}]$ and $\mathcal F_\rho[f_{n,0}]$. We then apply Lemma \ref{lem:stability}, since the functions $f_n$ satisfy all the required assumptions.
  
  The last part of the statement (i.e. preservation of the lower bounds of $f/m$) is a direct consequence of the approximation, provided $f_{n,0}$ is chosen so that $f_{n,0}\geq c_0m$ for all $n$.
%
%
%
%
 \end{proof}

\subsection{Weighted $L^1$ contractivity and uniqueness}
We now prove the uniqueness part Theorem \ref{thm:existence-uniqueness} by showing a  contractivity result on weak solutions.
As the reader will see from the proof, this actually follows by a weighted contractivity result on the equation satisfied by $f/m$.
Although weighted contractivity estimates have already appeared in the literature (see for instance \cite{Va,BV15,BV16}),
these are completely new in the ultrafast regime setting, and we expect both the result and the method of the proof to be useful in other circumstances.

 \begin{prop}[$L^1$ contractivity]
 \label{prop:contr}
	Let $f$ and $g$ be two nonnegative weak solutions of \eqref{eq:f}, and suppose that there exists a constant $c_0>0$ such that $g(t)\ge c_0m>0$ for all $t\in [0,\infty)$.
Then,
$$
	\int_{\Omega}(f(t,x)-g(t,x))_+\di x\le \int_{\Omega}(f(0,x)-g(0,x))_+\di x\qquad \text{for all}\ \,t \geq 0,
$$
$$
	\int_{\Omega}(f(t,x)-g(t,x))_-\di x\le \int_{\Omega}(f(0,x)-g(0,x))_-\di x\qquad \text{for all}\ \,t \geq 0,
$$
which in particular implies $L^1$ contractivity:
$$
	\int_{\Omega} |f(t,x)-g(t,x)| \di x \le \int_{\Omega}|f(0,x)-g(0,x)| \di x\qquad \text{for all}\ \,t \geq 0.
$$
 \end{prop}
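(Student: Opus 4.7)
\emph{Proof proposal.} The natural move is to change variables to $u=f/m$ and $v=g/m$, which solve \eqref{eq:u}; since $m>0$, one has $\int_\Omega (f-g)_\pm=\int_\Omega m(u-v)_\pm$, so it is enough to establish the same monotonicity for the weighted quantities $\int_\Omega m(u-v)_\pm$. Setting $\phi:=u-v$ and $w:=u^{-r}-v^{-r}$, subtracting the two versions of \eqref{eq:u} gives
\[
\partial_t\phi=-\frac{r+1}{m}\,\div(m\nabla w),
\]
and the elementary identity $w=-rA(u,v)\phi$ with $A(u,v):=\int_0^1(tu+(1-t)v)^{-r-1}\di t>0$ exhibits $w$ as the product of a nonnegative coefficient and $\phi$ itself, producing a linear-looking divergence form and motivating a Kato-type test.

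I would next test the equation for $\phi$ against $m\sigma_\eps(\phi)$, where $\sigma_\eps$ is a Lipschitz, nondecreasing approximation of $\mathbf{1}_{\{s>0\}}$, vanishing on $(-\infty,0]$ and equal to $1$ on $[\eps,\infty)$, so that $\Sigma_\eps(s):=\int_0^s\sigma_\eps$ approximates $s_+$. This is admissible because Definition \ref{def:weak} provides $\phi\in L^2_{\mathrm{loc}}(H^1)$ and Lemma \ref{lem:W1infty} provides $\partial_t\phi\in L^2_{\mathrm{loc}}(H^{-1})$; after a standard time-localisation and chain-rule argument one obtains
\[
\frac{d}{dt}\int_\Omega m\,\Sigma_\eps(\phi)=(r+1)\int_\Omega m\,\sigma_\eps'(\phi)\,\nabla\phi\cdot\nabla w=-r(r+1)I_\eps^{(1)}-r(r+1)I_\eps^{(2)},
\]
with
\[
I_\eps^{(1)}:=\int_\Omega mA(u,v)\,\sigma_\eps'(\phi)\,|\nabla\phi|^2\geq0,\qquad I_\eps^{(2)}:=\int_\Omega m\,\phi\,\sigma_\eps'(\phi)\,\nabla\phi\cdot\nabla A(u,v).
\]
The first term is the good dissipation term, with the right sign; the heart of the argument is to show that the residual $I_\eps^{(2)}$ vanishes as $\eps\to0$.

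This is where the hypothesis $g\geq c_0m$, equivalently $v\geq c_0$, enters decisively. The support of $\sigma_\eps'(\phi)$ is contained in $\{0<\phi<\eps\}$, on which $u>v\geq c_0$, so $tu+(1-t)v\geq c_0$ for every $t\in[0,1]$ and therefore $A(u,v)$ together with its partial derivatives are bounded by constants depending only on $r$ and $c_0$; in particular $|\nabla A(u,v)|\leq C(r,c_0)(|\nabla u|+|\nabla v|)$ on that set. Since $|\phi\,\sigma_\eps'(\phi)|\leq 1$ uniformly in $\eps$ and vanishes pointwise a.e.\ as $\eps\to 0$, while $|\nabla\phi|(|\nabla u|+|\nabla v|)\in L^1_{\mathrm{loc}}([0,\infty)\times\Omega)$ by the $H^1$ regularity in Definition \ref{def:weak}, dominated convergence yields $I_\eps^{(2)}\to0$ after integration in time. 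Passing to the limit in the resulting differential inequality then delivers $\int_\Omega m(u(t)-v(t))_+\leq\int_\Omega m(u(0)-v(0))_+$, i.e.\ the bound on the positive part.

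For the negative part I would repeat the argument with $\sigma_\eps(-\phi)$ in place of $\sigma_\eps(\phi)$, so that the relevant thin layer is now $\{v-\eps<u<v\}$. Crucially, $v\geq c_0$ forces $u\geq c_0-\eps\geq c_0/2$ on this layer as soon as $\eps\leq c_0/2$, so the same uniform bounds on $A$ and $\nabla A$ apply and the argument goes through verbatim. The $L^1$ contractivity then follows from $|f-g|=(f-g)_++(f-g)_-$. I expect the main technical difficulty to be the rigorous justification of the chain rule and the integration by parts inside the weak-solution class of Definition \ref{def:weak}; the main conceptual point is that the one-sided positivity assumption $g\geq c_0m$ is exactly what provides a uniform lower bound on $u$ on the thin layer supporting $\sigma_\eps'(\phi)$, which is precisely what is needed to tame the otherwise singular nonlinear coefficient $A(u,v)$ and its gradient.
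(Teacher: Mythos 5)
Your proof is correct and, structurally, follows the same scheme as the paper's: test the equation for $\phi=u-v$ against a multiple of an approximation of $\mathbf{1}_{\{\phi>0\}}$, integrate by parts, discard a nonnegative dissipation term, and exploit the one-sided lower bound $v\geq c_0$ to control the residual on the thin layer supporting the cutoff's derivative. But your algebraic decomposition is different and buys a real simplification. The paper writes $\nabla(u^{-r})-\nabla(v^{-r})=-r\big[u^{-(r+1)}\nabla\phi+(u^{-(r+1)}-v^{-(r+1)})\nabla v\big]$ by adding and subtracting a cross term, estimates $|u^{-(r+1)}-v^{-(r+1)}|$ by a mean-value inequality, dominates the residual integrand by $Cm\big(|\nabla(u^{-r/2})|^2+|\nabla(v^{-r/2})|^2\big)$ — which requires the interpolation Lemma \ref{lem:holder} to place $u^{-r/2},v^{-r/2}$ in $L^2(\dot H^1)$ — and then, after dominated convergence reduces the residual to an integral over $\{u=v\}$, invokes the a.e.\ coincidence $\nabla u=\nabla v$ on that level set (via \cite{EvaGar}) to conclude it vanishes. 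Your Hadamard-type factorization $u^{-r}-v^{-r}=-rA(u,v)\,\phi$ packages both the coefficient and its gradient directly: on the support of $\sigma_\eps'(\phi)$ the lower bound $tu+(1-t)v\geq c_0$ makes $A$ and $|\nabla A|\leq C(r,c_0)(|\nabla u|+|\nabla v|)$ bounded, so the residual is dominated by $C(|\nabla u|+|\nabla v|)^2\in L^1_{\rm loc}$ with no interpolation needed, and since $\phi\,\sigma_\eps'(\phi)\to 0$ pointwise a.e., dominated convergence kills the residual outright without ever appealing to the coincidence of gradients on $\{u=v\}$. The one-sided cutoff (with $\sigma_\eps'$ supported on $[0,\eps]$) is also marginally cleaner for the positive part, since the thin layer then sits entirely inside $\{u\geq v\geq c_0\}$ without shrinking $\eps$. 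In short: same strategy, but your mean-value parametrisation of the nonlinearity streamlines both the integrability check and the passage to the limit.
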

 
 \begin{proof}
As often in this paper, it is convenient to use the notation $u=f/m$ and $v=g/m$.
  
For every $\eps>0$, let us consider $\psi_\eps \colon \R \to \R$ to be a smooth approximation of the positive part defined as follows: for all $s \in \R$, 
 $$
\psi_\eps(s)=\left\{
\begin{array}{ccc}
 0 & \mbox{if}\ s<-\eps,\\
 \frac{1}{4\eps}(s+\eps)^2 & \mbox{if}\  -\eps\le s\le\eps, \\
 s & \mbox{if}\ s>\eps.
 \end{array}
 \right.
 $$
We fix $t \geq 0$. Using Lemma \ref{lem:W1infty} we deduce that $\pt_t(u-v) \in L^2_{\rm loc}([0,\infty),H^{-1}(\Omega))$, which allows us to justify the next computation:
\be\label{eq:psi1}
\begin{split}
	&\frac{\di}{\di t}\int_{\Omega}m(x)\,\psi_\eps(u(t,x)-v(t,x))\di x=\int_{\Omega}m(x)\,\psi'_\eps(u(t,x)-v(t,x))\pt_t(u-v)(t,x)\di x\\
&\phantom{=}\overset{\eqref{eq:u}}{=}-(r+1)\int_{\Omega}\[ \psi'_\eps(u(t,x)-v(t,x)) \]\\
&\phantom{ =\overset{\eqref{eq:u}}{=}-(r+1)\int_{\Omega} [ \psi'_\eps(u(t,x) }
	 \times\[\div_x(m(x)\nabla_x (u(t,x))^{-r})-\div_x(m(x)\nabla_x (v(t,x))^{-r})\]\di x.
\end{split}
\ee
 Integrating by parts,
 \begin{align*}
&\frac{\di}{\di t}\int_{\Omega}m(x)\,\psi_\eps(u(t,x)-v(t,x))\di x\\
&\phantom{=}=(r+1)\int_{\Omega}\,m(x)\,\psi''_\eps(u(t,x)-v(t,x))\,\nabla_x(u(t,x)-v(t,x))\cdot\[\nabla_x u(t,x)^{-r}-\nabla_x v(t,x)^{-r}\]\di x\\
&\phantom{=}=-r(r+1)\int_{\Omega} \[m(x)\,\psi''_\eps(u(t,x)-v(t,x))\,\nabla_x(u(t,x)-v(t,x))\]\\
&\phantom{ == -r(r+1)\int_{\Omega} [ m(x)\,\psi''_\eps(u(t,x)-v(t,x))\, } 
	\cdot\[u(t,x)^{-(r+1)}\nabla_x u(t,x)-v(t,x)^{-(r+1)}\nabla_x v(t,x)\]\di x.
\end{align*}
Adding and subtracting $u(t,x)^{-(r+1)}\nabla_x v(t,x)$ in the square brackets above we get
\begin{align*}
	&\frac{\di}{\di t}\int_{\Omega}m(x)\,\psi_\eps(u(t,x)-v(t,x))\di x\\
&\phantom{=}=-r(r+1)\int_{\Omega}\,m(x)\,\psi''_\eps(u(t,x)-v(t,x))\,u(t,x)^{-(r+1)}\,|\nabla_x(u(t,x)-v(t,x))|^2 \di x\\
&\phantom{==} -r(r+1)\int_{\Omega} \left[ m(x)\,\psi''_\eps(u(t,x)-v(t,x))\(u(t,x)^{-(r+1)}-v(t,x)^{-(r+1)}\) \right]\\
&\phantom{ === -r(r+1)\int_{\Omega} [ m(x)\,\psi''_\eps(u(t,x)-v(t,x)) }
	\times \left[ \nabla_x\(u-v\)(t,x) \cdot\nabla_x v(t,x)\right] \di x\\
&=: I_1+I_2.
\end{align*}
The first term $I_1$ is nonpositive because $\psi_\eps'' \geq0$.
By definition of $\psi_\eps$ we have
\begin{align*}
	&I_2 = -r(r+1)\int_{\{|u(t,x)-v(t,x)|\le \eps\}} \left[ \frac{m(x)}{2\eps}\(u(t,x)^{-(r+1)}-v(t,x)^{-(r+1)}\) \right]\\
	&\phantom{== -r(r+1)\int_{\{|u(t,x)-v(t,x)|\le \eps\}} [ \frac{m(x)}{2\eps}}
		\times \left[\nabla_x\(u-v\)(t,x)\cdot\nabla_x v(t,x) \right] \di x.
\end{align*}
Note that, for some $C_r>0$,
 $$
 |u^{-(r+1)}-v^{-(r+1)}|=\frac{|v^{r+1}-u^{r+1}|}{u^{r+1}v^{r+1}}\leq C_r\frac{|u-v|(u^r+v^r)}{u^{r+1}v^{r+1}}.
 $$
Thus, for a constant $C>0$ (which in the rest of the proof may change value across any line),
\begin{align*}
	I_2 &\le C\int_{\{|u(t,x)-v(t,x)|\le \eps\}}\frac{m(x)}{\eps}\frac{|u(t,x)-v(t,x)|(u^r(t,x)+v^r(t,x))}{u^{r+1}v^{r+1}(t,x)}\big|\nabla_x\(u-v\)(t,x)\big|\cdot|\nabla_x v(t,x)|\di x\\
	&\leq C\int_{\{|u(t,x)-v(t,x)|\le \eps\}}m(x)\frac{u^r(t,x)+v^r(t,x)}{u^{r+1}v^{r+1}(t,x)}\big|\nabla_x\(u-v\)(t,x)\big|\cdot|\nabla_x v(t,x)|\di x.
\end{align*}
From \eqref{eq:psi1} we therefore get
\be\label{eq:psi2}
\begin{split}
	&\frac{\di}{\di t}\int_{\Omega}m(x)\,\psi_\eps(u(t,x)-v(t,x))\di x\\
	&\phantom{=}\leq C\int_{\{|u(t,x)-v(t,x)|\le \eps\}}m(x)\frac{u^r(t,x)+v^r(t,x)}{u^{r+1}v^{r+1}(t,x)}\big|\nabla_x\(u-v\)(t,x)\big|\cdot|\nabla_x v(t,x)|\di x.
\end{split}
\ee
We now claim that the integrand in the right-hand side above belongs to $L^1([0,\infty)\times\Omega)$. To prove this, recall that, by assumption, we have $v \ge c_0$. Since in the domain of integration in $I_2$ we have $|u(t)-v(t)|\le\eps,$ for $\eps$ small enough (for instance $\eps \leq c_0/4$) we also have that $u(t)\ge c_0/2$. Therefore, on the domain of integration in $I_2$,
\begin{equation} \label{eq:u v}
	\frac{u(t)}{2}\leq v(t)\leq 2u(t).
\end{equation}
Also note that, on the domain of integration (dropping the $(t,x)$ dependences to simplify the following computation),
\begin{align*}
m \frac{(u^r+v^r)}{u^{r+1}v^{r+1}}\big|\nabla_x\(u-v\)\big|\cdot|\nabla_x v| &\leq m\frac{(u^r+v^r)}{u^{r+1}v^{r+1}}\big(|\nabla_x  u|+|\nabla_x v|\big)\cdot|\nabla_x v|\\
&\leq 2 \,m \frac{(u^r+v^r)}{u^{r+1}v^{r+1}}\Big(|\nabla_x u|^2+|\nabla_x v|^2\Big)\\
&\leq C \Bigl(\frac{u^r}{u^{2(r+1)}}|\nabla_x u |^2+\frac{v^r}{v^{2(r+1)}}|\nabla_x v|^2\Big) m\\
&= C \Bigl(\frac{1}{u^{r+2}}|\nabla_x u|^2+\frac{1}{v^{r+2}}|\nabla_x v|^2\Big) m\\
&= C\Bigl(|\nabla_x u^{-r/2}|^2+|\nabla_x v^{-r/2}|^2\Big) m,
\end{align*}
where the last inequality follows from \eqref{eq:u v}. By the definition of weak solutions, we have that $u,v,u^{-r},v^{-r} \in L^2([0,\infty),\dot H^1(\Omega))$, so that $u^{-r/2},v^{-r/2} \in L^2([0,\infty),\dot H^1(\Omega))$ by Lemma \ref{lem:holder}. This proves that 
\begin{equation}
\label{eq:integrable}
	m \frac{(u^r+v^r)}{u^{r+1}v^{r+1}}\big|\nabla_x\(u-v\)\big|\cdot|\nabla_x v| \in L^1([0,\infty)\times \Omega).
\end{equation}
We now integrate in time the differential inequality \eqref{eq:psi2}: given $T>0,$ we have
\begin{align*}
\int_{\Omega}&m(x)\psi_\eps(u(T,x)-v(T,x))\di x\le \int_{\Omega}m(x)\psi_\eps(u(0,x)-v(0,x))\di x\\
&+C\int_0^T\int_{\{|u(t,x)-v(t,x)|\le \eps\}}m(x)\frac{(u^r(t,x)+v^r(t,x))}{u^{r+1}v^{r+1}(t,x)}\big|\nabla_x\(u-v\)(t,x)\big\|\nabla_x v(t,x)|\di x\di t,
\end{align*}
and, by dominated convergence (thanks to \eqref{eq:integrable}), in the limit $\eps \to 0$ we obtain the following:
\begin{align*}
\int_{\Omega}&m(x)(u(T,x)-v(T,x))_+\,\di x\le \int_{\Omega}m(x)(u(0,x)-v(0,x))_+\di x\\
&+C\int_0^T\int_{\{u(t,x)=v(t,x)\}}m(x)\frac{(u^r+v^r)}{u^{r+1}v^{r+1}}\big|\nabla_x\(u-v\)(t,x)\big| |\nabla_x v(t,x)|\di x\di t.
\end{align*}
Since on the region $\{u(t,x)=v(t,x)\}$ we have that $\nabla_xu(t,x)=\nabla_xv(t,x)$ for almost every $x$
(see for instance \cite[Theorem 4, Chapter 4.2.2]{EvaGar}), we get that 
$$
	\int_{\Omega}m(x)(u(t,x)-v(t,x))_+\di x\le \int_{\Omega}m(x)(u(0,x)-v(0,x))_+\di x\qquad \text{for all}\ \,t>0.
$$
Recalling that $u=f/m$ and $v=g/m$, this proves the first part of the statement.

Repeating the proof, this time with $\psi_\eps$ a smooth approximation of the negative part,
we obtain
$$
\int_{\Omega}m(x)(u(t,x)-v(t,x))_-\,\di x\le \int_{\Omega}m(x)(u(0,x)-v(0,x))_-\,\di x\qquad \text{for all}\ \,t>0,
$$
concluding the proof.
\end{proof}

Let us now restate the uniqueness part of Theorem \ref{thm:existence-uniqueness} (which implicitly requires the usual standing assumption on $m$):

 \begin{thm}[Uniqueness of weak solutions] \label{thm:uniq sol}
	Given $f_0 \in L^{r+3}(\Omega)$ with $\f_\rho[f_0]<\infty$, there exists at most one weak solution of \eqref{eq:f} starting from $f_0$.
 \end{thm}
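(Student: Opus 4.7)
The plan is to deduce uniqueness from the $L^1$ contractivity estimate of Proposition \ref{prop:contr}, via an approximation argument that introduces an auxiliary third solution bounded uniformly from below. The difficulty is that Proposition \ref{prop:contr} requires one of the two solutions being compared, say $g$, to satisfy $g(t) \geq c_0 m$ for every $t$, which we cannot a priori assume of two arbitrary weak solutions sharing the initial datum $f_0$. So we introduce a third solution whose positivity is guaranteed by Theorem \ref{thm:existence-weak}, and compare both candidates to it.

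Concretely, let $f$ and $g$ be two weak solutions of \eqref{eq:f} starting from $f_0$. For each $n \in \mathbb{N}$, set $h_{n,0} := f_0 + \frac{1}{n} m$. Since $f_0 \in L^{r+3}(\Omega)$ and $m$ is bounded, $h_{n,0} \in L^{r+3}(\Omega)$; since $h_{n,0} \geq f_0$, we have $\mathcal F_\rho[h_{n,0}] \leq \mathcal F_\rho[f_0] < \infty$; and by construction $h_{n,0} \geq \frac{1}{n} m$. Theorem \ref{thm:existence-weak} then produces a weak solution $h_n$ of \eqref{eq:f} starting from $h_{n,0}$ which, by the last clause of that theorem, satisfies $h_n(t) \geq \frac{1}{n} m$ for every $t \geq 0$.

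With $h_n$ at our disposal, Proposition \ref{prop:contr} applies to both pairs $(f, h_n)$ and $(g, h_n)$, yielding, for every $t \geq 0$,
\begin{equation*}
\|f(t) - h_n(t)\|_{L^1(\Omega)} \leq \|f_0 - h_{n,0}\|_{L^1(\Omega)}, \qquad \|g(t) - h_n(t)\|_{L^1(\Omega)} \leq \|f_0 - h_{n,0}\|_{L^1(\Omega)}.
\end{equation*}
The triangle inequality gives $\|f(t) - g(t)\|_{L^1(\Omega)} \leq 2 \|h_{n,0} - f_0\|_{L^1(\Omega)} = \frac{2}{n} \int_\Omega m \di x$, and since $m$ is bounded on $\Omega$ the right-hand side tends to $0$ as $n \to \infty$. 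Hence $f(t) = g(t)$ almost everywhere on $\Omega$ for every $t \geq 0$, which proves uniqueness. The substantive obstacle has already been overcome in Proposition \ref{prop:contr}; what remains is only to produce an admissible upward perturbation of $f_0$, and the choice $h_{n,0} = f_0 + \frac{1}{n} m$ is particularly clean because pushing the density upward automatically preserves both the $L^{r+3}$ integrability and the finiteness of $\mathcal F_\rho$.
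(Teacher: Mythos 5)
Your proof is correct and takes essentially the same route as the paper: both approximate the (possibly degenerate) initial datum by one that is bounded below by a positive multiple of $m$, invoke the existence theorem to get a weak solution that stays bounded below, and then apply the weighted $L^1$ contraction of Proposition \ref{prop:contr} to transfer $L^1$ closeness from the initial data to all later times. The paper perturbs via $\max\{f_0,\eps m\}$ and compares a single arbitrary weak solution to a limit of the approximating solutions $v_\eps$; you perturb additively via $f_0+\frac{1}{n}m$ and compare the two candidate solutions $f$ and $g$ directly to the common approximant $h_n$ through the triangle inequality. Both perturbations satisfy the required hypotheses ($L^{r+3}$ membership, $\mathcal F_\rho$ finiteness by monotonicity, and a uniform lower bound), so the choice is cosmetic. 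Your triangle-inequality bookkeeping is marginally cleaner, since it sidesteps the need (left implicit in the paper) to extract a limit $v$ of the family $(v_\eps)_\eps$ before concluding.
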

\begin{proof}
Suppose that $f$ is a weak solution starting from $f_0$, and write $u=f/m$.
Set $u_0=f_0/m$, consider $u_0^\eps=\max\{u_0,\eps\}$, and let $v_\eps$ be the solution constructed in Theorem \ref{thm:existence-weak} starting from $u_0^\eps$ for all $\eps>0$. Note that we have $v_\eps \geq \eps>0$. By Proposition \ref{prop:contr} we deduce that
$$
\int_\Omega |u(t,x)-v_\eps(t,x)|\,\di x \leq \int_\Omega |u_0(x) -v_\eps(0,x)|\di x = \int_\Omega |u_0(x) -u_0^\eps(x)|\di x \leq C\eps \qquad \text{for all}\ \,t \geq 0,
$$
for some time-independent constant $C>0$. Letting $\eps \to0$, we get
$$
\int_\Omega |u(t,x)-v(t,x)|\di x \leq 0\qquad \text{for all}\ \,t \geq 0,
$$
where $v$ is an arbitrary limit of $(v_\eps)_\eps$. Hence $u(t)$ must coincide with $v(t)$ and therefore, since $u$ (resp. $f$) can be any weak solution starting from $u_0$ (resp. $f_0$), we obtain uniqueness.
\end{proof}

We conclude this section by showing some easy and useful corollaries. First, we give a continuous maximum principle which is a corollary of Proposition \ref{prop:contr}. Remark \ref{rem:corollary} below also shows that, in fact, this continuous principle can be seen as a corollary of the discrete maximum principle (Lemma \ref{lem:max}) and Theorem \ref{thm:uniq sol}.

\begin{cor}[Continuous maximum principle]
\label{cor:max princ}
Let $f$ be a weak solution of \eqref{eq:f} starting from some initial datum $f_0$ such that
$f_0\leq C_0m $ (resp. $f_0\geq c_0m$).
Then $f(t)\leq C_0m $ (resp. $f(t)\geq c_0m$) for all $t\geq 0$.
\end{cor}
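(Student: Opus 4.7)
The plan is to deduce the corollary directly from the $L^1$ contractivity in Proposition~\ref{prop:contr} by comparing $f$ with the stationary solutions $x\mapsto Cm(x)$.

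First I would observe that for every constant $C>0$ the function $g(t,x):=Cm(x)$ is a weak solution of \eqref{eq:f}. Indeed, since $\rho=m^{r+1}$, one has $\rho/(Cm)^{r+1}=C^{-(r+1)}$, which is constant in $x$, so $\nabla_x(\rho/g^{r+1})\equiv 0$ and the right-hand side of \eqref{eq:f} vanishes, while $\partial_t g=0$. The verification of the regularity requirements of Definition~\ref{def:weak} is immediate: $g/m=C$ and $(g/m)^{-r}=C^{-r}$ are constants, hence trivially in $L^2_{\mathrm{loc}}([0,\infty),H^1(\Omega))$; the integrability conditions on the initial datum hold since $m$ is bounded above and below by the standing assumption.

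Next I would apply Proposition~\ref{prop:contr} twice. For the upper bound, set $g:=C_0 m$; this is a weak solution with $g\geq C_0 m$, so the bound ``$g(t)\geq c_0 m$'' required by the proposition holds (with $c_0=C_0$). Applying the first inequality of Proposition~\ref{prop:contr} to the pair $(f,g)$ yields
\[
\int_\Omega (f(t,x)-C_0 m(x))_+\di x\leq \int_\Omega (f_0(x)-C_0 m(x))_+\di x=0,
\]
where the right-hand side vanishes by the hypothesis $f_0\leq C_0 m$, giving $f(t)\leq C_0 m$. For the lower bound, set $g:=c_0 m$, which itself satisfies $g\geq c_0 m$, and apply the second inequality of Proposition~\ref{prop:contr} (the one with the negative part) to the pair $(f,g)$, obtaining
\[
\int_\Omega (c_0 m(x)-f(t,x))_+\di x\leq \int_\Omega (c_0 m(x)-f_0(x))_+\di x=0,
\]
so that $f(t)\geq c_0 m$.

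There is no real obstacle here: all the work is done by Proposition~\ref{prop:contr}, whose hypothesis ``one of the two solutions is bounded below by a multiple of $m$'' is exactly tailored to allow the comparison with the explicit stationary states $Cm$. The only point to check carefully is that these $Cm$ are admissible weak solutions in the sense of Definition~\ref{def:weak}, which as noted above is immediate because $g/m$ and $(g/m)^{-r}$ are constants. (Alternatively, as indicated in the forthcoming Remark~\ref{rem:corollary}, one could pass to the limit the discrete maximum principle of Lemma~\ref{lem:max} along the JKO scheme built from $f_0$ and invoke Theorem~\ref{thm:uniq sol} to identify the limit with $f$; however the route through Proposition~\ref{prop:contr} is more direct and does not require revisiting the construction.)
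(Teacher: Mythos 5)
Your proof is correct and follows essentially the same route as the paper: comparing $f$ with the constant-in-time weak solution $g(t,x)=Cm(x)$ via the signed $L^1$ contraction of Proposition~\ref{prop:contr}. Your explicit verification that $Cm$ is indeed a weak solution in the sense of Definition~\ref{def:weak} is a sensible addition that the paper leaves implicit.
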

\begin{proof}
	Assume for instance that $f_0\leq C_0m $ (the case $f_0\geq c_0m$ being analogous).
Then we apply Proposition \ref{prop:contr} to $f(t)$ and $g(t)=C_0m$ to deduce that
$$
\int_\Omega (f(t,x)-C_0m(x))_+ \di x\leq 
\int_\Omega (f(0,x)-C_0m(x))_+\di x=0\qquad \text{for all}\ \,t\geq 0.
$$ 
Thus, for all $t\geq0$ we get $f(t)\leq C_0m$, as desired.
\end{proof}

\begin{remark}\label{rem:corollary}
	The above corollary can be also proved by noticing that, thanks to the discrete maximum principle, it holds for all solutions obtained as limit of the JKO scheme. Since by uniqueness all solutions can be obtained in this way, the result follows.
\end{remark}

Finally, we give a useful remark, which is now straightforward, about the continuous dependence of the unique weak solution in terms of the initial data.

\begin{cor}[Stability with respect to the initial data]
\label{cor:stability}
Suppose that $(f_{n,0})_n$ is a sequence of initial data bounded in $L^{r+3}(\Omega)$ with $\f_\rho[f_{0,n}]<\infty$ for all $n$, and such that the sequence $(f_{n,0}^{-1})_n$ is bounded in $L^r(\Omega)$. Suppose that for each $n$ the function $f_{n,0}$ is bounded (not necessarily uniformly in $n$) from above and from below by positive constants. Suppose $f_{n,0}\destar f_0$ as $n\to\infty$. Then, the unique weak solution $f_n$ associated with the initial datum $f_{n,0}$ converges (weakly for every time, and weakly in $L^2([0,T],H^1(\Omega))$ for every $T$) as $n\to\infty$ to the unique weak solution associated with the initial datum $f_{0}$.
\end{cor}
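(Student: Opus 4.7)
The proof is essentially a direct application of the stability lemma (Lemma \ref{lem:stability}) combined with the uniqueness theorem (Theorem \ref{thm:uniq sol}), so the plan will be short.

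First, I would verify that the hypotheses of Lemma \ref{lem:stability} are met by taking the constant sequence of weights $m_n \equiv m$. The trivial choices make the Sobolev bound on $\log m_n$ and the uniform convergence $\log m_n \to \log m$ immediate. The assumed boundedness of $(f_{n,0})_n$ in $L^{r+3}(\Omega)$, of $(f_{n,0}^{-1})_n$ in $L^r(\Omega)$, and the weak-$*$ convergence $f_{n,0} \overset{*}{\rightharpoonup} f_0$ are exactly the remaining conditions on initial data. The individual pointwise bounds $c_{0,n} m \leq f_{n,0} \leq C_{0,n} m$ together with Corollary \ref{cor:max princ} propagate to $c_{0,n} m \leq f_n(t) \leq C_{0,n} m$ for all $t \geq 0$, which supplies the (nonuniform) upper and lower bounds required by Lemma \ref{lem:stability}. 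Mass convergence $M_n \to M$ follows from testing the weak-$*$ convergence against the constant function $1$.

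Once the hypotheses are checked, Lemma \ref{lem:stability} yields a subsequence along which $f_n(t) \overset{*}{\rightharpoonup} f(t)$ for every $t \geq 0$, where $f$ is a weak solution of \eqref{eq:f} starting from $f_0$ associated with the weight $m$. By Theorem \ref{thm:uniq sol}, this weak solution $f$ is uniquely determined by $f_0$. Since every subsequence of $(f_n)_n$ admits a further subsequence converging to the same limit, a standard urysohn-type argument upgrades the convergence along subsequences to convergence of the full sequence $f_n(t) \overset{*}{\rightharpoonup} f(t)$ for all $t$.

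For the weak convergence in $L^2([0,T], H^1(\Omega))$ for every $T>0$, I would recall that the proof of Lemma \ref{lem:stability} establishes a uniform bound on $u_n := f_n/m$ in $L^2_{\rm loc}([0,\infty), H^1(\Omega))$ (via the identity \eqref{H1estimate-semicont2} with $q = r+3$ together with Poincar\'e--Wirtinger, using the uniform $L^{r+3}$ bound on $f_{n,0}$). Combined with the uniform boundedness of $m$, this yields a uniform bound on $f_n$ in $L^2([0,T], H^1(\Omega))$, hence weak subsequential convergence; uniqueness of the limit again promotes this to convergence of the whole sequence. No step presents a real obstacle here — the bulk of the work was already done in Lemma \ref{lem:stability} and Theorem \ref{thm:uniq sol}, and the corollary's role is just to package that content cleanly.
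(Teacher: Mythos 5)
Your proposal is correct and takes essentially the same route as the paper: the paper's own proof is a one-liner invoking Lemma \ref{lem:stability} and Theorem \ref{thm:uniq sol}, with the observation that uniqueness of the limit removes the need to extract subsequences. Your verification of the hypotheses of Lemma \ref{lem:stability} (constant weights $m_n \equiv m$, Corollary \ref{cor:max princ} for the propagated nonuniform bounds, mass convergence from testing against $1$) and your Urysohn argument for full-sequence convergence are exactly the details the paper leaves implicit, so there is nothing to add.
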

\begin{proof}
	This is a consequence of Lemma \ref{lem:stability} and of the uniqueness result of Theorem \ref{thm:uniq sol}. Note that there is no need to extract a subsequence because of the uniqueness.
\end{proof}

\section{Harnack inequalities: proof of Theorem \ref{thm:bddness-longtime}}\label{sec:reg}

In \cite{Iac2} the author proves exponential convergence to equilibrium for initial data that are bounded away from zero and infinity (although the result there is stated only for $d=1$, the proof works without modification in any dimension). Exponential convergence results will be the object of Section \ref{sec:long-time-behaviour} and will be based on the preliminary proof of the fact that, instantaneously, solutions become bounded from above and below.  The following proposition, along with the continuous maximum principle stated in Corollary \ref{cor:max princ}, gives the proof of Theorem \ref{thm:bddness-longtime}. Note that the hypotheses on $f_0$ as stated in Theorem \ref{thm:bddness-longtime} are equivalent to those as stated in Proposition \ref{prop:reg1} below.

 \begin{prop}[Instantaneous regularisation; Harnack inequalities]\label{prop:reg1}
Let $T\in (0,1]$ and assume $\log m\in W^{1,p}(\Omega)$ for some $p>d$. Let $f_0 \in L^{r+3}(\Omega)$ with $\mathcal F_{\rho}[f_0]<\infty$, and let $f$ be a weak solution of \eqref{eq:f} starting from $f_0$. Let us write $\sigma=r+1$. Assume that there exists $q> \sigma\max\(1,\frac{d}{2}\)$ such that $f_0,f_0^{-1} \in L^q(\Omega)$. Then there are constants $C_1,C_{-1}>0$, independent of $T$, such that
\be\label{eq:estimate-nu}
 	\| f(3T)\|_{L^\infty(\Omega)}\le \frac{C_1}{T^{\alpha}}\|f_0\|^\beta_{L^q(\Omega)},
\ee
\be\label{eq:estimate-nu2}
 	\| f(3T)^{-1}\|_{L^\infty(\Omega)}\le \frac{C_{-1}}{T^{\alpha(1+2\alpha\sigma)}}\|f_0^{-1}\|^\beta_{L^q(\Omega)}\|f_0\|^{2\alpha\sigma\beta}_{L^q(\Omega)},
\ee
 where $\alpha=A_\infty B_\infty$ and $\beta=B_\infty$
 are given by
 $$
 A_\infty:=\sum_{i=1}^\infty \frac{1}{\bar q_i},\qquad B_\infty:=\prod_{i=0}^{\infty}\frac{q_{i}}{\bar q_{i}}.
 $$
Here, for all $i \in \mathbb{N}\cup\{0\}$, $q_i=\theta^i q-\theta\sigma\frac{\theta^i-1}{\theta-1}$, $\theta=\frac{d}{d-2}$, and $\bar q_i=q_i-\sigma$.
 \end{prop}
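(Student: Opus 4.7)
The plan is to run Moser iterations: first on $u := f/m$ to obtain the upper bound \eqref{eq:estimate-nu}, and then on $v := 1/u$ to obtain the lower bound \eqref{eq:estimate-nu2}. The recursive relation $q_{i+1}=\theta\bar q_i=\theta(q_i-\sigma)$ will arise by applying Sobolev's inequality $\dot H^1(\Omega)\hookrightarrow L^{2\theta}(\Omega)$ (with $\theta=d/(d-2)$ for $d\ge 3$; for $d\le 2$ any $\theta>1$ works since $H^1\hookrightarrow L^p$ for every $p<\infty$) to $u^{\bar q/2}$. The hypothesis $q>\sigma\max(1,d/2)$ is what guarantees $q_i\sim c\,\theta^i$ with $c>0$, so that $A_\infty$ converges and $B_\infty$ is finite.

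The starting point for the upper bound is the energy identity, derived from \eqref{eq:u}: for all $q>\sigma$,
\begin{equation*}
\frac{d}{dt}\int_\Omega u^q m\,dx \;=\; -\frac{4q\sigma(q-1)r}{(q-\sigma)^2}\int_\Omega m\,|\nabla u^{(q-\sigma)/2}|^2\,dx.
\end{equation*}
Justifying this rigorously for a weak solution in the sense of Definition \ref{def:weak} is the first nontrivial point: I will approximate $s\mapsto s^q$ away from $s=0$, use $\partial_t u\in L^2_{\rm loc}([0,\infty),H^{-1}(\Omega))$ from Lemma \ref{lem:W1infty} together with the Sobolev regularity of $u$ and $u^{-r}$ from Definition \ref{def:weak}, and invoke Lemma \ref{lem:holder} to control the intermediate powers of $u$. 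Coupling the identity with the Sobolev inequality applied to $u^{\bar q_i/2}$ and with Moser's cutoff trick on shrinking subintervals $[t_i,T]$ produces the standard recursion
\begin{equation*}
\sup_{t\in[t_{i+1},T]}\|u(t)\|_{L^{q_{i+1}}}^{q_{i+1}} \;\leq\; \Bigl(\frac{C_i}{t_{i+1}-t_i}\Bigr)\Bigl(\sup_{t\in[t_i,T]}\|u(t)\|_{L^{q_i}}^{q_i}\Bigr)^{q_{i+1}/q_i},
\end{equation*}
which, iterated on the geometric schedule $t_i=T(1-2^{-i})$, produces $\|u(T)\|_{L^\infty}\leq CT^{-\alpha}\|u_0\|_{L^q}^\beta$. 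The exponents $\alpha=A_\infty B_\infty$ and $\beta=B_\infty$ emerge from telescoping the powers $q_i/q_{i+1}=q_i/(\theta\bar q_i)$ and the factors of $1/(t_{i+1}-t_i)$; since $m$ is bounded above and below, this gives \eqref{eq:estimate-nu} on $[T,3T]$.

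For the lower bound I will apply the analogous scheme to $v:=1/u$, starting the iteration at time $T$ where the $L^\infty$ bound on $u$ from the previous step is already available. The corresponding energy identity is
\begin{equation*}
\frac{d}{dt}\int_\Omega v^q m\,dx \;=\; -\frac{4q\sigma r(q+1)}{(q+\sigma)^2}\int_\Omega m\,|\nabla v^{(q+\sigma)/2}|^2\,dx,
\end{equation*}
which structurally parallels the one for $u$ except for the shift $q-\sigma\to q+\sigma$ in the Sobolev exponent. This shift will be absorbed by writing $v^{(q+\sigma)/2}=v^{(q-\sigma)/2}u^{-\sigma}$, where the factor $u^{-\sigma}$ is controlled in $L^\infty$ by the bound from \eqref{eq:estimate-nu}; at each iteration step this costs a factor of order $\|u\|_{L^\infty}^{2\sigma/q_i}$. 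Compounded across all steps these contributions yield a total extra factor $\|u\|_{L^\infty}^{2\sigma\alpha}$, and substituting $\|u(T)\|_{L^\infty}\leq CT^{-\alpha}\|f_0\|_{L^q}^\beta$ from Step~1 produces exactly the exponents $\alpha(1+2\alpha\sigma)$ and $2\alpha\sigma\beta$ of \eqref{eq:estimate-nu2}, on top of the base factors $T^{-\alpha}\|f_0^{-1}\|_{L^q}^\beta$ coming from the Moser iteration on $v$ itself.

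The \textbf{main obstacle} I anticipate is twofold. The first is the rigorous justification of the energy identities at arbitrarily high powers $q_i$ for a solution merely in the class of Definition \ref{def:weak}, which requires careful truncation and passage to the limit; Lemma \ref{lem:holder} is essential here to interpolate from the prescribed regularity at exponents $1$ and $-r$ to general powers. The second is the bookkeeping of exponents in the $v$-iteration, where the absorption of the $2\sigma$ shift via the $L^\infty$ bound on $u$ and the compounding over $i$ have to precisely reproduce the form of \eqref{eq:estimate-nu2}.
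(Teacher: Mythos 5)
Your overall plan coincides with the paper's: derive the weighted dissipation identity $\frac{\di}{\di t}\int_\Omega u^{\nu q}m = -C_\nu(q)^2\int_\Omega m\,|\nabla u^{\nu\bar q/2}|^2 u^{(\nu-1)\sigma}$ for $u=f/m$, run a Moser iteration with $q_{i+1}=\theta\bar q_i$ for the upper bound ($\nu=1$), then run it again for $\nu=-1$ (equivalently your $v=1/u$) absorbing the extra weight $u^{-2\sigma}$ by the already-obtained upper bound — this is exactly how the exponents $\alpha(1+2\alpha\sigma)$ and $2\alpha\sigma\beta$ arise in \eqref{eq:estimate-nu2}. However, there is a concrete problem with your displayed recursion. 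As written,
\begin{equation*}
\sup_{[t_{i+1},T]}\|u\|_{L^{q_{i+1}}}^{q_{i+1}} \;\leq\; \frac{C_i}{t_{i+1}-t_i}\Bigl(\sup_{[t_i,T]}\|u\|_{L^{q_i}}^{q_i}\Bigr)^{q_{i+1}/q_i},
\end{equation*}
both sides carry the \emph{same} total exponent $q_{i+1}$ on the norm, so after extracting $q_{i+1}$-th roots the norm exponent stays equal to $1$ at each step and telescopes to $\|u(T)\|_{L^\infty}\lesssim\|u_0\|_{L^q}$, i.e., $\beta=1$, contradicting the claimed $\beta=B_\infty=\prod_i q_i/\bar q_i>1$. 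The correct per-step exponent is $q_i/\bar q_i$, not $q_{i+1}/q_i$: the dissipation controls $\|\nabla u^{\bar q_i/2}\|_{L^2}$ (note the $\bar q_i$, not $q_i$), while the input is $\|u\|_{L^{q_i}}$, so one obtains $\|u(t_{i+1})\|_{L^{q_{i+1}}}^{\bar q_i}\lesssim T^{-1}\|u(t_i)\|_{L^{q_i}}^{q_i}$, whence $\|u\|_{L^{q_{i+1}}}\lesssim\|u\|_{L^{q_i}}^{q_i/\bar q_i}$ and the compounded exponent is $B_\infty$. The mismatch $\bar q_i<q_i$ is precisely the nonlinear ($\sigma\neq 0$) signature of the equation, and your recursion inadvertently erases it.

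Beyond that, your approach is heavier than the paper's in two respects that are worth noting. First, the paper never introduces cutoffs or $\sup$-in-time: since the dissipation identity shows that $t\mapsto\int_\Omega u(t)^{\nu q}m$ is nonincreasing for every admissible $q$, it suffices to integrate the identity over a time interval $(t_0,t_0+T)$, pick a mean-value time $\bar t$ at which the dissipation is $\leq T^{-1}\int u(t_0)^{\nu q}m$, and apply Sobolev at that single time. The schedule $t_k\in(t_{k-1},t_{k-1}+T\,2^{1-k})$ then lands $t_\infty\in(T,3T)$, and monotonicity of $\|u(\cdot)\|_{L^\infty}$ transfers the bound to $3T$. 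Second, for the rigorous justification of the energy identities at high powers, the paper does not truncate the nonlinearity $s\mapsto s^q$: it first proves everything as an a priori estimate for initial data that are bounded above and below (whereupon Corollary \ref{cor:max princ} propagates the bounds in time, so all powers of $u$ lie in $L^2_{\rm loc}H^1$), and then passes to general initial data using the stability result Corollary \ref{cor:stability}. This sidesteps the delicate truncation-and-interpolation you propose via Lemma \ref{lem:holder}. Your alternative route is not wrong in principle, but it carries substantially more technical burden than what is needed given the tools already established in the paper.
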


 \begin{proof}
	It is convenient to prove the regularisation result in terms of $u:=f/m$ and then, at the end, rewrite the result in terms of $f$. We also write $u_0:=u(0)=f_0/m$. We proceed as follows: we first obtain two inequalities, one for the gradient of $u$ and the other for $u$ (see \eqref{eq:H1} and \eqref{eq:L2}); we then show the result \eqref{eq:estimate-nu} for $f$ using a Moser iteration; we finally prove \eqref{eq:estimate-nu2} for $f^{-1}$ in a similar way, using in fact the result for $f$. All our estimates have to be considered as a priori estimates: in order to perform the computations, in particular the derivations of certain integrals in time, we need Sobolev bounds on powers of $u$; therefore, we can start by supposing that our initial datum is bounded from above and below, which implies, thanks to Corollary \ref{cor:max princ}, that the same bounds propagate to every time $t$ and the regularity $u\in L^2_{\rm loc}([0,\infty),H^1(\Omega))$ implies the same regularity for all powers of $u$. Then, we note that the estimates we obtain do not depend on the bounds on the initial datum, and we therefore use Corollary \ref{cor:stability} to deduce by approximation the same estimates for general initial data. We will not make this procedure explicit in this proof.
\medskip

{\it Step 1: inequalities for $u$ and its gradient.} Let $\nu \in \{-1,1\}$. We differentiate the weighted $L^q$ norm of $u^\nu$ (in a similar computation as that to obtain \eqref{H1estimate-semicont2}), and then use the equation for $u$ and an integration by parts to obtain, for all $t \in [0,T]$,
 \begin{equation}\label{eq:der Lq}
 \begin{split}
 	\frac{\di}{\di t}&\int_{\Omega} u(t,x)^{\nu q} m(x)\di x=\nu q\int_{\Omega}u(t,x)^{\nu q-1}\pt_tu(t,x)m(x)\di x\\
 &=-\nu q\sigma\int_{\Omega}u(t,x)^{\nu q-1}\div_x\(m(x)\nabla_x \(u(t,x)^{-r}\)\)\di x\\
 &=\nu q\sigma\int_{\Omega} m(x)\,\nabla_x\(u(t,x)^{\nu q-1}\) \cdot \nabla_x\( u(t,x)^{-r}\)\di x\\
 &=-\nu q\,(\nu q-1)\,r\sigma\int_{\Omega}m(x)\left|\nabla_x\left( u(t,x)^\nu \right)\right|^2u(t,x)^{\nu (q-2)-\sigma}\di x \leq 0.
\end{split}
\end{equation}
If we define $C_\nu(q)>0$ such that
$$
	C_\nu(q)^2=\frac{4\,\nu q\,(\nu q-1)\,r\,\sigma}{(q-\sigma)^2},
$$ 
and we notice that
$$
	\left|\nabla_x\left(u(t,x)^\nu\right)\right|^2u(t,x)^{\nu(q-2)-\sigma}=\frac{1}{\eta^2}\left|\nabla_x\bigl(u(t,x)^{\nu\eta}\bigr) \right|^2 u(t,x)^{(\nu-1)\sigma},\qquad \eta:=\frac{q-\sigma}{2},
$$
where $\eta>0$ thanks to the condition $q>\sigma$, we obtain
\bes
	\frac{\di}{\di t} \int_{\Omega} m(x)u(t,x)^{\nu q}\di x=-C_\nu(q)^2 \int_{\Omega}m(x)\left|\nabla_x\bigl(u(t,x)^{\nu \eta}\bigr)\right|^2 u(t,x)^{(\nu-1)\sigma}\di x.
\ees
Set $t_0:= T$. Integrating the previous expression between $t_0$ and $t_0+T$ and dividing by $T$ we have
\begin{equation*}
	\frac{C_\nu(q)^2}{t_0} \int_{t_0}^{t_0+T}\int_{\Omega}m(x)\left|\nabla_x\bigl(u(t,x)^{\nu \eta}\bigr)\right|^2 u(t,x)^{(\nu-1)\sigma} \di x\di t\le\frac{1}{T}\int_{\Omega}\,m(x)u(t_0,x)^{\nu q}\di x.
\end{equation*}
Thus, there exists $\bar t\in\(t_0,t_0+T\)$ such that
\be\label{eq:H1}
\begin{split}
	\int_{\Omega}\left|\nabla_x\bigl(u(\bar t,x)^{\nu \eta}\bigr) \right|^2 u(\bar t,x)^{(\nu-1)\sigma} \di x &\le\frac{1}{\lambda\,C_\nu(q)^2\,T}\int_{\Omega} m(x) u(t_0,x)^{\nu q}\di x\\
	&\le\frac{1}{\lambda^2\,C_\nu(q)^2\,T}\int_{\Omega}u(t_0,x)^{\nu q}\di x ,
\end{split}
\ee
using $\lambda \le m\le 1/\lambda$; see the Sobolev standing assumption on $m$. Note that since the weighted $L^q$ norm of $u^\nu$ is nonincreasing in time (see \eqref{eq:der Lq}), we know $u(t_0)^\nu \in L^q(\Omega)$. Furthermore, following the same computation as in \eqref{eq:der Lq} replacing $q$ by $2\eta$, the weighted $L^{2\eta}$ norm of $u^\nu$ is as well nonincreasing, and therefore, because again $\lambda \le m\le 1/\lambda$,
$$
\int_{\Omega}u(\bar t,x)^{2\nu \eta}\di x
\leq \frac{1}{\lambda}\int_{\Omega}u(\bar t,x)^{2\nu\eta}m(x)\di x
\leq \frac{1}{\lambda}\int_{\Omega}u(t_0,x)^{2\nu\eta}m(x)\di x
\leq \frac{1}{\lambda^2}\int_{\Omega}u(t_0,x)^{2\nu\eta}\di x.
$$
Since $2\eta< q$, by H\"older's inequality we obtain
\bes
	\int_{\Omega}u(t_0,x)^{2\nu\eta}\di x \leq |\Omega|^{1-\frac{2\eta}{q}} \(\int_{\Omega}u(t_0,x)^{\nu q}\di x\)^{2\eta/q},
\ees
which finally gives
\be\label{eq:L2}
	\int_{\Omega}u(\bar t,x)^{2\nu \eta}\di x \leq \frac{|\Omega|^{\frac\sigma q}}{\lambda^2} \(\int_{\Omega}u(t_0,x)^{\nu q}\di x\)^{2\eta/q}.
\ee
\medskip

{\it Step 2: proof of \eqref{eq:estimate-nu} ($\nu =1$).} Since $u(t_0) \in L^q(\Omega),$ \eqref{eq:H1} and \eqref{eq:L2} imply that $u(\bar t)^{\eta}\in H^1(\Omega)$.
Thus, by Sobolev's inequality, when $d\geq 3$ we obtain
\begin{align*}
	\(\int_{\Omega}u(\bar t,x)^{2^*\eta}\di x\)^{1/2^*}\le C_\mathrm{S}\(\(\int_{\Omega}u(\bar t,x)^{2\eta}\di x\)^{1/2}+\(\int_{\Omega}\left|\nabla_x\bigl(u(\bar t,x)^{\eta}\bigr)\right|^2\di x\)^{1/2} \),
\end{align*}
where $C_\mathrm{S}>0$ is the Sobolev constant (depending on $\Omega$) and $2^*:=2d/(d-2)$. When $d\in \{1,2\}$ the same inequality holds by replacing $2^*$ with any number larger than two\footnote{For the one-dimensional case it is actually even easier, since we can see that the $H^1$ estimate obtained so far is itself enough to provide $L^\infty$ bounds.}. Because this does not change any argument given in the rest of the proof, for simplicity and without loss of generality we assume $d\geq 3$.
Using \eqref{eq:H1} and \eqref{eq:L2} with $\nu=1$ and $\lambda \le m\le 1/\lambda,$ 
$$
	\(\int_{\Omega}u(\bar t,x)^{2^*\eta}\di x\)^{1/2^*}\le \frac{C_\mathrm{S}}{\lambda} \( |\Omega|^{\frac{\sigma}{2q}}\(\int_{\Omega}u(t_0,x)^{q}\di x\)^{\eta/q}+\frac{1}{C_1(q)\,\sqrt{T}}\(\int_{\Omega}\,u(t_0,x)^{q}\di x\)^{1/2} \).
$$
Recalling that $m\le 1/\lambda,$ we get
\be\label{eq:M}
	\|u(t_0)\|_{L^q(\Omega)}^q\geq |\Omega|^{1-q} \|u(t_0)\|_{L^1(\Omega)}^q=|\Omega|^{1-q}\(\int_{\Omega} \frac{f(t_0,x)}{m(x)} \di x\)^q \geq |\Omega|^{1-q}(\lambda  M)^q,
\ee
and so $\frac{|\Omega|^{q-1}}{(\lambda M)^q}\int_{\Omega}u(t_0,x)^{q}\di x\geq 1$. Therefore, since $\eta/q <1/2$,
\begin{align*}
	\(\int_{\Omega}u(t_0,x)^{q}\di x\)^{\eta/q}&= |\Omega|^{\frac{(1-q)\eta}{q}} (\lambda M)^\eta\(\frac{|\Omega|^{q-1}}{(\lambda M)^q}\int_{\Omega}u(t_0,x)^{q}\di x\)^{\eta/q}\\
	&\leq |\Omega|^{\frac{(1-q)\eta}{q}} (\lambda M)^\eta\(\frac{|\Omega|^{q-1}}{(\lambda M)^q}\int_{\Omega}u(t_0,x)^{q}\di x\)^{1/2}\\
	&=|\Omega|^{(1-q)\left( \frac\eta q - \frac12\right)} (\lambda M)^{\eta-\frac q2} \(\int_{\Omega}u(t_0,x)^{q}\di x\)^{1/2}.
\end{align*}
Hence, all in all,
\bes
	\(\int_{\Omega}u(\bar t,x)^{2^*\eta}\di x\)^{1/2^*}\le \frac{C_\mathrm{S}}{\lambda} \(\frac{|\Omega|^{\frac\sigma2}}{(\lambda M)^{\frac\sigma2}}\(\int_{\Omega}u(t_0,x)^{q}\di x\)^{1/2}+\frac{1}{C_1(q) \sqrt{T}}\(\int_{\Omega}\,u(t_0,x)^{q}\di x\)^{1/2} \).
\ees
Using that $T \leq 1$ then gives
\be\label{eq:estimate-star}
	\(\int_{\Omega}u(\bar t,x)^{2^*\eta}\di x\)^{1/2^*}\leq \frac{\tilde C_1(q)}{\sqrt T}\(\int_{\Omega}\,u(t_0,x)^{q}\di x\)^{1/2},
\ee
where
\bes
	\tilde C_1(q) = \frac{C_\mathrm{S}}{\lambda} \( \frac{|\Omega|^{\frac\sigma2}}{(\lambda M)^{\frac\sigma2}} + \frac{1}{C_1(q)} \) >0.
\ees
Note that $\tilde C_1(p) \to \frac{C_\mathrm{S}}{\lambda} \left( \frac{|\Omega|^{\frac\sigma2}}{(\lambda M)^{\frac\sigma2}} + \frac{1}{2\sqrt{r\sigma}} \right)$ as $p\to\infty$.

We now want to initialise a Moser interative scheme. To this end, let us define $\eta_0=\eta$, $q_0=q$, $q_1=2^*\eta_0$, and $t_1=\bar t$.
With this notation, \eqref{eq:estimate-star} yields
$$
	\(\int_{\Omega}u(t_1,x)^{q_1}\di x\)^{\eta_0/q_1}\leq  \frac{\tilde C_1(q_0)}{\sqrt{T}}\(\int_{\Omega}\,u(t_0,x)^{q_0}\di x\)^{1/2},
$$
or equivalently
$$
\|u(t_1)\|^{\eta_0}_{L^{q_1}(\Omega)}\le \frac{\tilde C_1(q_0)}{\sqrt T}\| u(t_{0})\|^{\frac{q_0}{2}}_{L^{q_0}(\Omega)}.
$$
Observe that $q_1>q_0$ thanks to the assumption that $q_0>\frac{\sigma d}{2}$. We can now repeat the argument above starting from $t_1$ in place of $t_0$, $q_1$ in place of $q_0$, $T/2$ in place of $T$, and we find a time $t_2 \in (t_1,t_1+T/2)$ such that
 $$
	\|u(t_2)\|^{\eta_1}_{L^{q_2}(\Omega)}\le \frac{2^{1/2} \tilde C_1(q_1)}{\sqrt{T}}\| u(t_{1})\|^{\frac{q_{1}}{2}}_{L^{q_{1}}(\Omega)}, \qquad \eta_1 = \frac{q_1-\sigma}{2}, \quad q_2=2^*\eta_1.
$$
Iterating $k \in \mathbb{N}$ times, we find
$$
t_k\in \(t_{k-1}, t_{k-1}+\frac{T}{2^{k-1}}\)
$$
such that
\begin{equation}\label{eq:stepk}
	\|u(t_k)\|^{\eta_{k-1}}_{L^{q_k}(\Omega)}\le \frac{2^{(k-1)/2}\tilde C_1(q_{k-1})}{\sqrt{T}}\| u(t_{k-1})\|^{\frac{q_{k-1}}{2}}_{L^{q_{k-1}}(\Omega)}, \qquad \eta_{k-1} = \frac{q_{k-1}-\sigma}{2},
\end{equation}
where
$$
	q_k=2^*\eta_{k-1}=\frac{2d}{d-2}\frac{q_{k-1}-\sigma}{2}=\theta^kq_0-\sigma[\theta^k+\theta^{k-1}+\cdots+\theta]=\theta^kq_0-\theta\sigma\frac{\theta^{k}-1}{\theta-1},
$$
with $\theta:=\frac{d}{d-2}>1$. Note that, since $q_0>\frac{\sigma}{\theta-1}$ (because $q_0>\frac{\sigma d}{2}$), $q_k$ grows exponentially fast to infinity as $k\to \infty$. By equation \eqref{eq:stepk}, for all $k\in\mathbb{N}$ we have that
\begin{align*}
	\|u(t_k)\|_{L^{q_k}(\Omega)}&\le \(\frac{2^{(k-1)/2}\tilde C_1(q_{k-1})}{\sqrt{T}}\)^{\frac{2}{\bar q_{k-1}}}\| u(t_{k-1})\|^{\frac{q_{k-1}}{\bar q_{k-1}}}_{L^{q_{k-1}}(\Omega)}\\
	&\le \prod_{i=0}^{k-1}\(\frac{2^{i/2}\tilde C_1(q_{i})}{\sqrt{T}}\)^{\frac{2}{\bar q_{i}}\prod_{j=i+1}^{k-1}\frac{q_j}{\bar q_j}}\| u(t_0)\|^{\prod_{i=0}^{k-1}\frac{q_{i}}{\bar q_{i}}}_{L^{q_{0}}(\Omega)},
\end{align*}
where $\bar q_{k}:=q_{k}-\sigma$ and where, by convention, $\prod_{j=i+1}^{k-1} \frac{q_j}{\bar q_j} =1$ if $k \leq i+1$. Letting $k\to \infty,$ by the exponential growth of $(q_k)_k$ we find a time $t_\infty \in (t_0,2T)$ such that
\begin{align*}
	\|u(t_\infty)\|_{L^{\infty}(\Omega)}&\leq \lim_{k\to \infty}\biggl( \prod_{i=0}^{k-1}\(\frac{2^{i/2} \tilde C_1(q_i)}{\sqrt{T}}\)^{\frac{2}{\bar q_{i}}\prod_{j=i+1}^{k-1}\frac{q_j}{\bar q_j}}
\biggr)\| u(t_0)\|^{\prod_{i=0}^{\infty}\frac{q_{i}}{\bar q_{i}}}_{L^{q_{0}}(\Omega)}\\
	&\leq \lim_{k\to \infty} \prod_{i=0}^{k-1}\(\frac{2^{i/2}\tilde C_1(q_i)}{\sqrt{T}}\)^{\frac{2}{\bar q_{i}}\prod_{j=0}^{\infty}\frac{q_j}{\bar q_j}}\| u(t_0)\|^{\prod_{i=0}^{\infty}\frac{q_{i}}{\bar q_{i}}}_{L^{q_{0}}(\Omega)}\\
	&= \(\frac{\prod_{i=0}^{\infty}[2^{i/2} \tilde C_1(q_i)]^{\frac{2}{\bar q_i}}}{\prod_{i=0}^{\infty}T^{1/\bar q_i}}\)^{\prod_{j=0}^{\infty}\frac{q_j}{\bar q_j}}
\| u(t_0)\|^{\prod_{i=0}^{\infty}\frac{q_{i}}{\bar q_{i}}}_{L^{q_{0}}(\Omega)}\\
	&=\(\frac{C_{1,\infty}}{T^{A_\infty}}\)^{B_\infty}
\| u(t_0)\|^{B_\infty}_{L^{q_{0}}(\Omega)},
\end{align*}
where
$$
	A_\infty:=\sum_{i=1}^\infty \frac{1}{\bar q_i},\qquad B_\infty:=\prod_{i=0}^{\infty}\frac{q_{i}}{\bar q_{i}},
\qquad C_{1,\infty}:=\prod_{i=0}^{\infty} \left( 2^{i/2} \tilde C_1(q_i)\right)^{\frac{2}{\bar q_i}} >0,
$$
are finite constants since there exist constants $c,C>0$ such that, for all $i \in\mathbb{N}$, $\bar q_i\geq c\, \theta^i$ and $\frac{q_{i}}{\bar q_{i}}\leq 1+C\theta^{-i}$.
Recalling the computation in \eqref{eq:der Lq}, we know that the $L^{q_0}$ and $L^\infty$ norms of $u$ are nonincreasing in time, and we thus conclude that
$$
\|u(3T)\|_{L^{\infty}(\Omega)}
\leq \|u(t_\infty)\|_{L^{\infty}(\Omega)}\leq \frac{C_{1,\infty}^{B_\infty}}{T^{A_\infty B_\infty}}
\| u(t_0)\|^{B_\infty}_{L^{q_{0}}(\Omega)} \leq \frac{C_{1,\infty}^{B_\infty}}{T^{A_\infty B_\infty}}
\| u_0\|^{B_\infty}_{L^{q_{0}}(\Omega)},
$$
that is, with the notation given in the statement of the theorem,
\be\label{eq:result-u}
	\|u(3T)\|_{L^{\infty}(\Omega)}\leq \frac{C_{1,\infty}^\beta}{T^\alpha} \| u_0\|^\beta_{L^{q}(\Omega)},
\ee
Since $f(3T)\leq u(3T)/\lambda$ and $u_0\leq f_0/\lambda$, we recover \eqref{eq:estimate-nu} and $C_1 = C_{1,\infty}^\beta/\lambda^{1+\beta}$.
\medskip

{\it Step 3: proof of \eqref{eq:estimate-nu2} ($\nu =-1$).} We proceed very similarly as in the case $\nu =1$ (Step 2), although in fact we use the result for $\nu=1$ as follows. By \eqref{eq:result-u} and the arbitrariness of $T$, we know that for every $t \in [\bar t,3T]$,
\bes
	u(t) \leq \frac{C_{1,\infty}^\beta}{(\bar t/3)^\alpha} \|u_0\|_{L^q(\Omega)}^\beta \leq \frac{C_{1,\infty}^\beta}{(T/3)^\alpha} \|u_0\|_{L^q(\Omega)}^\beta,
\ees
since $\bar t \geq T$.
Hence, with $\nu=-1$,
\bes
	u(\bar t)^{(\nu-1)\sigma} = u(\bar t)^{-2\sigma} \geq \left( \frac{C_{1,\infty}^\beta}{(T/3)^\alpha} \|u_0\|_{L^q(\Omega)}^\beta \right)^{-2\sigma} := \hat C^2 \|u_0\|_{L^q(\Omega)}^{-2\beta\sigma}T^{2\alpha\sigma},
\ees
where $\hat C = 9^{\alpha\sigma}C_{1,\infty}^{-\beta\sigma}$.
Therefore, coming back to \eqref{eq:H1} with $\nu=-1$ yields
\be\label{eq:H1-2}
	\hat C^2\|u_0\|_{L^q(\Omega)}^{-2\sigma\beta}T^{2\alpha\sigma} \int_{\Omega}\left|\nabla_x\bigl(u(\bar t,x)^{-\eta}\bigr) \right|^2 \di x \le\frac{1}{\lambda^2\,C_{-1}(q)^2\,T}\int_{\Omega}u_0(x)^{- q}\di x ,
\ee
Because $u_0^{-1} \in L^q(\Omega)$, \eqref{eq:H1-2} and \eqref{eq:L2} imply that $u(\bar t)^{-\eta} \in H^1(\Omega)$. Then, proceeding analogously as in Step 2, we get to
\be\label{eq:tildeC}
	\(\int_{\Omega}u(\bar t,x)^{2^*\eta}\di x\)^{1/2^*} \leq \frac{\tilde C_{-1}(q)}{\|u_0\|_{L^q(\Omega)}^{-\sigma\beta}T^{\alpha\sigma} \sqrt T}\(\int_{\Omega}\,u_0(x)^{q}\di x\)^{1/2},
\ee
where
\bes
	\tilde C_{-1}(q) = \frac{C_\mathrm{S}}{\lambda} \( \frac{|\Omega|^{-\frac\sigma2}}{(\lambda/M)^{\frac\sigma2}} + \frac{1}{\hat C C_{-1}(q)} \) >0.
\ees
In fact, in order to get to \eqref{eq:tildeC}, the only main difference with respect to Step 1 is the treatment of \eqref{eq:M}: we use Jensen's inequality to yield
\begin{align*}
	\|u(t_0)^{-1}\|_{L^q(\Omega)}^q&\geq |\Omega|^{1-q} \|u(t_0)^{-1}\|_{L^1(\Omega)}^q=|\Omega|^{1-q}\(\int_{\Omega} \left(\frac{f(t_0,x)}{m(x)}\right)^{-1} \di x\)^q\\
	&\geq |\Omega|^{1+q}\(\int_{\Omega} \frac{f(t_0,x)}{m(x)} \di x\)^{-q}\geq |\Omega|^{1+q}(\lambda/M)^q.
\end{align*}
Note that $\tilde C_{-1}(p) \to \frac{C_\mathrm{S}}{\lambda} \left( \frac{|\Omega|^{-\frac\sigma2}}{(\lambda/M)^{\frac\sigma2}} + \frac{1}{2\hat C\sqrt{r \sigma}} \right)$ as $p \to \infty$.

To initialise the iterative scheme we use the same notation as in the case $\nu=1$; we get
$$
	\|u(t_1)^{-1}\|^{\eta_0}_{L^{q_1}(\Omega)}\le \frac{\tilde C_{-1}(q_0)}{\|u(t_0)\|_{L^q(\Omega)}^{-\sigma\beta}T^{\alpha\sigma} \sqrt T}\| u(t_{0})^{-1}\|^{\frac{q_0}{2}}_{L^{q_0}(\Omega)}.
$$
We then follow the same strategy as in the case $\nu=1$. After $k\in\bb N$ iterations we obtain
\bes
	\|u(t_k)^{-1}\|_{L^{q_k}(\Omega)} \le \prod_{i=0}^{k-1}\(\frac{2^{i/2}\tilde C_{-1}(q_i)}{\|u_0\|_{L^q(\Omega)}^{-\sigma\beta}T^{\alpha\sigma} \sqrt T}\)^{\frac{2}{\bar q_{i}}\prod_{j=i+1}^{k-1}\frac{q_j}{\bar q_j}}\| u(t_0)^{-1}\|^{\prod_{i=0}^{k-1}\frac{q_{i}}{\bar q_{i}}}_{L^{q_{0}}(\Omega)}.
\ees
Letting $k\to \infty,$ because of the exponential growth of $(q_k)_k$ we find $t_\infty \in (0,3T)$ so that
\bes
	\|u(t_\infty)^{-1}\|_{L^{\infty}(\Omega)}\leq \(\frac{C_{-1,\infty}}{\(\|u_0\|_{L^q(\Omega)}^{-2\sigma\beta}T^{1+2\alpha\sigma}\)^{A_\infty}}\)^{B_\infty}
\| u(t_0)^{-1}\|^{B_\infty}_{L^{q_{0}}(\Omega)},
\ees
where $A_\infty$ and $B_\infty$ are as previously and
$$
	C_{-1,\infty}:=\prod_{i=0}^{\infty} \left( 2^{i/2} \tilde C_{-1}(q_i)\right)^{\frac{2}{\bar q_i}} >0,
$$
is a finite constant. By \eqref{eq:der Lq} we deduce that the $L^{q_0}$ and $L^\infty$ norms of $u^{-1}$ is nonincreasing, and we thus conclude that
\begin{align*}
	\|u(3T)^{-1}\|_{L^{\infty}(\Omega)} &\leq \|u(t_\infty)^{-1}\|_{L^{\infty}(\Omega)}\leq \frac{C_{-1,\infty}^{B_\infty}}{\(\|u_0\|_{L^q(\Omega)}^{-2\sigma\beta}T^{1+2\alpha\sigma}\)^{A_\infty B_\infty}}
\| u(t_0)^{-1}\|^{B_\infty}_{L^{q_{0}}(\Omega)}\\
	&\leq \frac{C_{-1,\infty}^{B_\infty}}{\(\|u_0\|_{L^q(\Omega)}^{-2\sigma\beta}T^{1+2\alpha\sigma}\)^{A_\infty B_\infty}}
\| u_0^{-1}\|^{B_\infty}_{L^{q_{0}}(\Omega)},
\end{align*}
that is, with the notation given in the statement of the theorem,
\bes
	\|u(3T)^{-1}\|_{L^{\infty}(\Omega)}\leq \frac{C_{-1,\infty}^\beta}{T^{\alpha(1+2\alpha\sigma)}}  \| u_0\|^{2\alpha\sigma\beta}_{L^{q}(\Omega)}\| u_0^{-1}\|^\beta_{L^{q}(\Omega)}.
\ees
Since it holds that $u(3T) \leq f(3T)/\lambda$, $f_0 \leq u_0/\lambda$ and $u_0\leq f_0/\lambda$, we finally recover \eqref{eq:estimate-nu2} and $C_{-1} = C_{-1,\infty}^\beta/ \lambda^{1+(1 + 2\alpha\sigma)\beta}$, which ends the proof.
 \end{proof}
  
  \begin{remark}
	The proof of Proposition \ref{prop:reg1} has been completely approached via the continuous-time study of the equation. In fact, it is also possible to obtain similar estimates also via the JKO iterations, using the flow-interchange technique (as for the $H^1$ estimates already presented in Section \ref{subsec:H1}). Yet, there are some drawbacks to the flow-interchange approach: it requires geodesic convexity of the functional, which means that it can only be used for positive powers (negative powers are only geodesically convex in dimension 1) and that it would be suitable to suppose that $\log m$ be concave; also, it does not allow to iterate infinitely many times, which finally provides estimates on the norms $\|u\|_{L^{p(\tau)}}$ for an expression $p(\tau)$ with $\lim_{\tau\to 0}p(\tau)=+\infty$. We decided to avoid this computation, because of its limited interest. \end{remark}

 \section{Long-time behaviour: proof of Theorem \ref{thm:bddness-longtime2}} \label{sec:long-time-behaviour}
 
 Thanks to the regularisation result of Section \ref{sec:reg}, we can now prove the first long-time convergence statement of Theorem \ref{thm:bddness-longtime2} (i.e., the $L^2$ convergence), which we restate below:
\begin{thm}[Exponential convergence to equilibrium]\label{L^2exp}
	Suppose that $f_0$ satisfies all the assumptions of Proposition \ref{prop:reg1} and that $f$ is a weak solution of \eqref{eq:f} starting from $f_0$. Then there exist constants $C,c>0$, independent of time, such that for all $t\geq0$ we have
$$
	\|f(t) - M\gamma m\|_{L^2(\Omega)} \leq Ce^{-ct}.
$$
\end{thm}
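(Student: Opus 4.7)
\medskip

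\noindent\textbf{Proof plan for Theorem \ref{L^2exp}.} The strategy is to combine the Harnack inequalities given by Proposition \ref{prop:reg1} with a weighted $L^2$ energy--dissipation estimate along the flow, exactly as in \cite{Iac2}; the regularisation result is what lets us drop the artificial lower bound assumption imposed on the initial datum in that paper.

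First, I would observe that under the assumptions of Proposition \ref{prop:reg1}, together with the continuous maximum principle of Corollary \ref{cor:max princ}, there exist $t_0>0$ and constants $0<c_0<C_0$ such that
\begin{equation*}
c_0 m \leq f(t) \leq C_0 m \qquad \text{for all } t \geq t_0,
\end{equation*}
and hence the rescaled solution $u = f/m$ satisfies $c_0 \leq u(t) \leq C_0$ uniformly for $t \geq t_0$. This brings us inside the ``good'' regime where $u$ is bounded and bounded away from zero, so that standard parabolic techniques apply.

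Next, I would introduce the weighted $L^2$ functional
\begin{equation*}
E(t) := \int_\Omega \bigl(u(t,x) - M\gamma\bigr)^2 m(x)\di x,
\end{equation*}
and differentiate it in time. Using \eqref{eq:u} and integrating by parts (which is justified thanks to the $H^1$ bounds in Definition \ref{def:weak}, and to Lemma \ref{lem:W1infty} which ensures $\partial_t u\in L^2_{\rm loc}([0,\infty),H^{-1}(\Omega))$), one obtains
\begin{equation*}
\frac{\di}{\di t} E(t) = 2\int_\Omega (u-M\gamma)\, m \,\partial_t u\di x = -2r(r+1)\int_\Omega m\, u^{-r-1}\,|\nabla u|^2\di x.
\end{equation*}
For $t\geq t_0$, the upper Harnack bound yields $u^{-r-1}\geq C_0^{-r-1}$, whence
\begin{equation*}
\frac{\di}{\di t} E(t) \leq -\frac{2r(r+1)}{C_0^{r+1}}\int_\Omega m\,|\nabla u|^2\di x.
\end{equation*}
Mass preservation gives $\int_\Omega u(t)\,m\di x = \int_\Omega f(t)\di x = M$, so $u(t)-M\gamma$ has zero mean with respect to the (equivalent to Lebesgue, thanks to the standing assumption $\lambda\leq m\leq \lambda^{-1}$) probability measure $\gamma m\di x$. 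A weighted Poincar\'e inequality therefore provides a constant $C_\mathrm{P}>0$, independent of time, such that
\begin{equation*}
E(t) \leq C_\mathrm{P}\int_\Omega m\,|\nabla u|^2\di x,
\end{equation*}
and combining the last two displays yields $\frac{\di}{\di t} E(t)\leq -c E(t)$ with $c := \frac{2r(r+1)}{C_\mathrm{P}C_0^{r+1}}$. Gr\"onwall's lemma then gives $E(t) \leq E(t_0)\,e^{-c(t-t_0)}$ for $t\geq t_0$, and since $m\leq \lambda^{-1}$ we have $\|f(t)-M\gamma m\|_{L^2(\Omega)}^2 \leq \lambda^{-1}E(t)$.

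Finally, I would handle the short-time interval $t\in [0,t_0]$ by noting that, by the monotonicity computation \eqref{eq:der Lq} with $\nu=1$ and $q=2$ (valid since $2>\sigma=r+1$ fails only for $r\geq 1$; if $r\geq 1$ one can instead use that $\|f(t)\|_{L^{r+3}}$ is nonincreasing together with the boundedness of $\Omega$), the quantity $\int_\Omega u(t)^2 m\di x$ is nonincreasing, so $E(t)$ is bounded uniformly for $t\in[0,t_0]$ in terms of the initial data; absorbing this bound into the multiplicative constant and possibly enlarging $C$ then yields the stated bound for all $t\geq 0$.

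The main obstacle I expect is nothing conceptually deep: it is just book-keeping to ensure the differentiation of $E$ is rigorously justified (which reduces to Lemma \ref{lem:W1infty} and the $H^1$ integrability from Definition \ref{def:weak}) and to verify the weighted Poincar\'e inequality for the reference measure $m\di x$ (which is immediate from $\lambda\leq m\leq\lambda^{-1}$ and the usual Poincar\'e inequality on the torus or on a convex bounded domain with zero-mean functions). Everything nontrivial has already been done: the Harnack inequalities of Proposition \ref{prop:reg1} are precisely what allow the bound $u^{-r-1}\geq C_0^{-r-1}$ to hold at every time $t\geq t_0$, turning the otherwise singular dissipation term into a genuine coercive quantity.
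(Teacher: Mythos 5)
Your proposal is correct and follows essentially the same route as the paper's proof: use Proposition \ref{prop:reg1} together with Corollary \ref{cor:max princ} to obtain two-sided bounds $c_0 m \leq f(t) \leq C_0 m$ for $t \geq t_0$, apply the energy--dissipation/Poincar\'e argument of \cite[Section 3]{Iac2} on $[t_0,\infty)$ (which you write out explicitly, whereas the paper treats it as a black box), and cover the initial interval $[0,t_0]$ by the monotone $L^2$-type bound from \eqref{eq:der Lq}. One small inaccuracy: the caveat ``valid since $2>\sigma$ fails only for $r\geq1$'' is not needed, because the monotonicity $\frac{\di}{\di t}\int_\Omega u^q m \leq 0$ in \eqref{eq:der Lq} with $\nu=1$, $q=2$ holds for every $r>0$ (the condition $q>\sigma$ is only used later to write the dissipation as $|\nabla u^\eta|^2$ with $\eta>0$, not to obtain the sign); your fallback via the $L^{r+3}$ norm also works, so the conclusion is unaffected.
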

\begin{proof}
	By the computation in \eqref{eq:der Lq} we know that $f(t) \in L^2(\Omega)$ for all $t\geq0$, and, because $m$ is bounded on a bounded domain, we also have $m\in L^2(\Omega)$. This implies that for any $c>0$ and any $t_0>0$ there exists a constant $c_1>0$ such that
\bes
	\|f(t) - M\gamma m\|_{L^2(\Omega)} \leq c_1e^{-t} \quad \mbox{for all $t\in [0,t_0]$}.
\ees
By the instantaneous regularisation proved in Proposition \ref{prop:reg1}, together with the maximum principle given in Corollary \ref{cor:max princ}, it follows that there exist constants $c_0, C_0>0$ such that, for all $t>t_0$,
$$
	c_0m \leq f(t)\leq C_0m.
$$
This allows us to apply the very same argument as in \cite[Section 3]{Iac2} to obtain that there exist constants $c_2,c_3>0$ so that
\bes
	\|f(t) - M\gamma m\|_{L^2(\Omega)} \leq c_2e^{-c_3t} \quad \mbox{for all $t > t_0$}.
\ees
By choosing $C=\max(c_1,c_2)$ and $c=\min(1,c_3)$ we get the desired result.
\end{proof} 
 
\begin{remark}
	If $\rho\equiv1$, then the convergence estimate in the above theorem reads
\begin{equation*}
	\left\|f(t) - \frac{M}{|\Omega|}\right\|_{L^2(\Omega)} \leq Ce^{-ct}\quad \mbox{for all $t\geq0$}.
\end{equation*}
\end{remark} 

We now go on studying long-time $BV$ estimates thanks to Lemma \ref{lem:BV}. For convenience, we recall the definition of the $BV$ norm weighted by $m$: if $u\in BV(\Omega)$, we set $\|u\|_{BV(\Omega;m)}:=\int_\Omega m \di|\nabla u|.$ First, we establish the following estimate.

\begin{lem}\label{firstBVexp}
Suppose $D^2(\log m)\leq \Lambda\, {\rm Id}$. Suppose that $f_0$ satisfies all the assumptions of Proposition \ref{prop:reg1} and that $f$ is a weak solution of \eqref{eq:f} starting from $f_0$. Then, for all $t>0$ we have $f(t)/m\in BV(\Omega)$ and for every $t_0>0$ there exists a constant $C_2>0$ such that for $t_1>t_0$ we have 
$$ \left\| f(t_1)\right\|_{BV(\Omega;m)}\leq e^{C_2\Lambda(t_1-t_0)} \left\| f(t_0)\right\|_{BV(\Omega;m)}.$$
	\end{lem}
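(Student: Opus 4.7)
The plan is to transplant the discrete $BV$ estimate of Lemma \ref{lem:BV} along the JKO iterates initiated at time $t_0$ and pass to the limit as the time step tends to zero. First, by the instantaneous Harnack inequalities of Proposition \ref{prop:reg1} together with the continuous maximum principle of Corollary \ref{cor:max princ} and the standing bounds $\lambda\leq m\leq\lambda^{-1}$, there exist constants $c_0,C_0>0$ such that $c_0 m\leq f(s)\leq C_0 m$ for every $s\geq t_0$. The uniformity of these bounds on $[t_0,\infty)$ ensures that the constant $C_1=C_1(c_0,C_0,\mathrm{sgn}\,\Lambda)$ appearing in Lemma \ref{lem:BV} can be chosen the same at every JKO step.

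Next, I fix $\tau>0$ small enough that $C_1\Lambda\tau<1$ (which is automatic when $\Lambda\leq 0$) and run the JKO scheme starting from $f_0^{(\tau)}=f(t_0)$, which is admissible since $f(t_0)\in L^\infty\subset L^{r+3}$ with $\mathcal F_\rho[f(t_0)]<\infty$. By the discrete maximum principle of Lemma \ref{lem:max}, the pointwise bounds $c_0 m\leq f_k^{(\tau)}\leq C_0 m$ are preserved, so Lemma \ref{lem:BV} applies at every iteration and yields, after $k$ steps,
$$\bigl\|f_k^{(\tau)}/m\bigr\|_{BV(\Omega;m)}\leq (1-C_1\Lambda\tau)^{-k}\bigl\|f(t_0)/m\bigr\|_{BV(\Omega;m)}.$$
Choosing $k=\lfloor(t_1-t_0)/\tau\rfloor$ and sending $\tau\to 0$, the prefactor converges to $e^{C_1\Lambda(t_1-t_0)}$. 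By Theorem \ref{thm:uniq sol} together with the JKO convergence machinery of Theorem \ref{thm:existence-weak}, the piecewise-constant interpolation weak-$*$ converges to the unique weak solution issued from $f(t_0)$, which by uniqueness is $s\mapsto f(t_0+s)$. Lower semicontinuity of $u\mapsto\|u/m\|_{BV(\Omega;m)}$ under weak-$*$ convergence (a direct consequence of its dual representation via test vector fields $\psi\in C^1_c(\Omega;\mathbb{R}^d)$ with $|\psi|\leq m$) then transfers the bound to $f(t_1)/m$. The required estimate on the $BV$ norm of $f(t_1)$ itself follows from the distributional identity $\nabla f = m\nabla(f/m)+(f/m)\nabla m$, the uniform bound $f/m\leq C_0$ and the Sobolev hypothesis $\log m\in W^{1,p}(\Omega)$ with $p>d$, which absorb the extra term into the constant $C_2$.

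A point requiring care is the first assertion that $f(t)/m\in BV(\Omega)$ for every $t>0$, since Definition \ref{def:weak} provides only $f/m\in L^2_{\mathrm{loc}}([0,\infty),H^1(\Omega))$, whence $f(s)/m\in H^1(\Omega)\hookrightarrow BV(\Omega)$ merely for almost every $s>0$. To extend this to every $t>0$, I would fix $t>0$, select $s\in(0,t)$ inside the a.e.\ set where $f(s)/m\in H^1(\Omega)$, and apply the iteration argument above on $[s,t]$, concluding that $f(t)/m\in BV(\Omega)$ by lower semicontinuity. The main obstacle is keeping the constant $C_1$ uniform in the iteration index; this is handled by first fixing the uniform pointwise bounds via Harnack before choosing $\tau$, so that $C_1$ depends only on $c_0$, $C_0$ and $\mathrm{sgn}\,\Lambda$, not on $\tau$ or $k$.
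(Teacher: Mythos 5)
Your core argument follows the paper's proof closely: both apply the discrete $BV$ estimate of Lemma~\ref{lem:BV} iteratively along a JKO scheme restarted from $f(t_0)$ (admissible by uniqueness, Theorem~\ref{thm:uniq sol}, and Lemma~\ref{thm:existence-weak-1}), use the Harnack inequalities from Proposition~\ref{prop:reg1} to make the constant in Lemma~\ref{lem:BV} uniform over the iteration, send $\tau\to 0$ to produce the exponential factor, and handle the membership $f(t)/m\in BV(\Omega)$ for \emph{all} $t>0$ by selecting a base time $s<t$ in the full-measure set of times where $f(s)/m\in H^1(\Omega)$ (this last being the ``Yet, the $L^2$ integrability\dots'' passage of the paper). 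Your explicit appeal to lower semicontinuity of the weighted total variation under weak-$*$ convergence is a clean way to make the passage to the limit precise, and is consistent with what the paper does implicitly.

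One remark at the end of your proposal does not hold up, although it concerns a point the paper itself glosses over. You argue that the bound on $\|f(t_1)/m\|_{BV(\Omega;m)}$ implies the displayed bound on $\|f(t_1)\|_{BV(\Omega;m)}$ by decomposing $\nabla f=m\nabla(f/m)+(f/m)\nabla m$ and ``absorbing the extra term into $C_2$''. This decomposition produces an \emph{additive} correction, of size roughly $C_0\int_\Omega m\,\di|\nabla m|$, which cannot be absorbed into a multiplicative factor $e^{C_2\Lambda(t_1-t_0)}$ when $\Lambda<0$: at the stationary state $f=M\gamma m$ one has $\|f/m\|_{BV(\Omega;m)}=0$ while $\|f\|_{BV(\Omega;m)}=M\gamma\int_\Omega m\,\di|\nabla m|>0$ for nonconstant $m$, so the literal inequality $\|f(t_1)\|_{BV(\Omega;m)}\leq e^{C_2\Lambda(t_1-t_0)}\|f(t_0)\|_{BV(\Omega;m)}$ with $\Lambda<0$ would be violated. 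The quantity that the iterated discrete estimate genuinely controls, and which both the paper's proof and the main part of your argument establish, is $\|f(t_1)/m\|_{BV(\Omega;m)}$ in terms of $\|f(t_0)/m\|_{BV(\Omega;m)}$; the display in the lemma's statement (and in the intermediate display of the paper's proof) should be read with $f(t_i)/m$ in place of $f(t_i)$, as is also consistent with the use made of this lemma in the subsequent theorem, where the conclusion is phrased for $u=f/m$. You would do better to drop the conversion remark and simply state that the estimate is on $\|f/m\|_{BV(\Omega;m)}$, which is what the lower semicontinuity step delivers.
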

 \begin{proof}
Using $k$ iterated times the bound in Lemma \ref{lem:BV} (as it can be understood from the beginning of Section \ref{subsec:existence}), we get, for $\tau>0$,
  $$
  \left\| \frac{f_k^{(\tau)}}{m}\right\|_{BV(\Omega;m)}\leq \(\frac{1}{1-C_1\Lambda \tau}\)^k \left\| \frac{f_0}{m}\right\|_{BV(\Omega;m)},
 $$
 where $C_1$ is as in Lemma \ref{lem:BV}. For the limit of the JKO scheme, this implies that for some $C_2>0$ we get, for all $t\geq0$,
 $$ 
 	\|f(t)\|_{BV(\Omega;m)}\leq e^{C_2\Lambda t} \|f_0\|_{BV(\Omega;m)},
$$
as soon as $f_0\in BV(\Omega)$ and $c_0m\leq f_0\leq C_0m$ (use Lemma \ref{thm:existence-weak-1}). This can be translated into the desired bound  $\|f(t_1)\|_{BV(\Omega;m)}\leq e^{C_2\Lambda(t_1-t_0)} \|f(t_0)\|_{BV(\Omega;m)}$ for any $t_0>0$ and $t_1>t_0$, as soon as $f(t_0)$ is in $BV(\Omega)$ and is bounded from below and above. (We need to restart a JKO scheme from $f(t_0)$, which the uniqueness allows us to do.) Yet, the $L^2$ integrability of the $H^1$ norm of $u=f/m$ implies that $u(t)$ is in $H^1(\Omega)$, and hence in $BV(\Omega)$, for almost every positive time $t$, and the instantaneous regularisation given by Theorem \ref{thm:bddness-longtime} provides the lower and upper bounds, which finally gives the desired result.
\end{proof}

We finally show the second long-time convergence statement of Theorem \ref{thm:bddness-longtime2} (i.e., the $BV$ convergence), which we restate below:

\begin{thm}
	Suppose $D^2(\log m)\leq \Lambda\, {\rm Id}$. Suppose that $f_0$ satisfies all the assumptions of Proposition \ref{prop:reg1} and that $f$ is a weak solution of \eqref{eq:f} starting from $f_0$. Then, there are constants $C,c>0$ such that, for $t$ large enough, setting $u=f/m$, we have
$$ 
	\|u(t)\|_{BV(\Omega)}\leq Ce^{-ct},
$$
which implies $\|f-M\gamma m\|_{BV(\Omega)}\leq Ce^{-ct},$
possibly for a different constant $C$.
	\end{thm}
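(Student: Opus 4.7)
The plan is to chain three ingredients: the $L^2$ exponential decay of Theorem \ref{L^2exp}, the short-time $BV$-propagation estimate of Lemma \ref{firstBVexp}, and an exponentially decaying integrated $H^1$-type dissipation bound coming from the gradient flow structure. The underlying idea is that Lemma \ref{firstBVexp} lets me propagate a small $BV$ norm forward over any time interval of length $1$ at the cost of a bounded multiplicative factor $e^{C_2|\Lambda|}$, so it suffices to find, for each large $t$, a ``sampling time'' $s_t\in[t,t+1]$ at which $\|u(s_t)\|_{BV(\Omega;m)}$ is already exponentially small.

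First, by Theorem \ref{thm:bddness-longtime} we have $c_0 m\le f(t)\le C_0 m$ for every $t\ge t_*$ large enough; by Theorem \ref{L^2exp} we also have $\|f(t)-M\gamma m\|_{L^2}\le Ce^{-ct}$. Under these Harnack bounds the functional $\mathcal F_\rho$ is smooth around its minimiser $M\gamma m$ and its second variation is uniformly coercive in the $L^2$-direction of mass-preserving perturbations, so a Taylor expansion yields $\mathcal F_\rho[f(t)]-\min\mathcal F_\rho\le C\|f(t)-M\gamma m\|_{L^2}^2\le Ce^{-2ct}$. Combining this with the gradient flow identity
\begin{equation*}
\frac{d}{dt}\mathcal F_\rho[f(t)]=-c(r)\int_\Omega m\,|\nabla u^{-r-1/2}(t,x)|^2\,dx
\end{equation*}
(obtained by passing to the limit $\tau\to 0$ in Lemma \ref{H1estimate-JKO}) and integrating from $t$ to $\infty$ yields
\begin{equation*}
\int_t^\infty\int_\Omega m\,|\nabla u^{-r-1/2}(s,x)|^2\,dx\,ds\le Ce^{-2ct}.
\end{equation*}

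By averaging on $[t,t+1]$, for each large $t$ there exists $s_t\in[t,t+1]$ with $\int_\Omega m\,|\nabla u^{-r-1/2}(s_t,x)|^2\,dx\le Ce^{-2ct}$. Since the Harnack bounds yield $|\nabla u|\le C u^{r+3/2}\,|\nabla u^{-r-1/2}|\le C'\,|\nabla u^{-r-1/2}|$ pointwise, this implies $\|\nabla u(s_t)\|_{L^2}\le Ce^{-ct}$ and hence, by Cauchy--Schwarz on the bounded domain $\Omega$, $\|u(s_t)\|_{BV(\Omega;m)}\le C\|\nabla u(s_t)\|_{L^1}\le Ce^{-ct}$. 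Applying Lemma \ref{firstBVexp} on $[s_t,t+1]$, which has length at most $1$, gives $\|u(t+1)\|_{BV(\Omega;m)}\le e^{C_2|\Lambda|}\|u(s_t)\|_{BV(\Omega;m)}\le C''e^{-ct}$, and after a trivial time shift this is exactly the desired bound $\|u(t)\|_{BV(\Omega)}\le Ce^{-ct}$ for $t$ large (recall that $\nabla u=\nabla(u-M\gamma)$ and that $m$ is bounded above and below). To pass to $f$, I write $\nabla(f-M\gamma m)=m\nabla u+(u-M\gamma)\nabla m$: the first term is controlled by $\|u(t)\|_{BV(\Omega)}$, while the second is bounded by $\|u(t)-M\gamma\|_{L^2}\|\nabla m\|_{L^2}$, which is again $O(e^{-ct})$ since the standing assumption $\log m\in W^{1,p}(\Omega)$ with $p>d$, together with $m\in L^\infty$, gives $\nabla m\in L^2(\Omega)$.

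The main obstacle I expect is the rigorous justification of the energy-dissipation identity and of the exponential decay $\mathcal F_\rho[f(t)]-\min\mathcal F_\rho\le Ce^{-2ct}$ for weak solutions. The cleanest route is to work at the discrete JKO level: sum the telescoping decrease $\mathcal F_\rho[f_k]-\mathcal F_\rho[f_{k+1}]$ against the dissipation bound in Lemma \ref{H1estimate-JKO}, and pass to the limit $\tau\to 0$ using the strong compactness arguments of Section \ref{subsec:existence}. The Taylor expansion step is then routine once the Harnack bounds are in hand, and the ``short-window'' propagation step is a direct application of Lemma \ref{firstBVexp}.
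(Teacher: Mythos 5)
Your proof is correct, and it uses the same overall strategy as the paper: combine the $L^2$ exponential decay (Theorem~\ref{L^2exp}), a Taylor expansion of a Lyapunov functional around the steady state $M\gamma m$, the energy-dissipation identity, and the discrete $BV$ propagation from Lemma~\ref{firstBVexp}. There are two minor differences in execution worth pointing out. First, the paper picks $\G_{(r+3)}$ as the Lyapunov functional instead of $\mathcal F_\rho=\G_{(-r)}$: with $q=r+3$ the dissipation exponent $(q-r-1)/2$ equals $1$ in \eqref{H1estimate-semicont2}, so the dissipation term is directly $\int_\Omega m|\nabla u|^2$, which spares the Harnack-based pointwise conversion $|\nabla u|\lesssim u^{r+3/2}|\nabla u^{-r-1/2}|\lesssim|\nabla u^{-r-1/2}|$ that your route needs. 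Second, the paper inserts the time-averaged inequality $h(t)\le C\int_{t-1}^t h(s)\,\di s$ (obtained from Lemma~\ref{firstBVexp} by integrating the propagation bound in the starting time $s$, with $h(s)=\|u(s)\|_{BV(\Omega;m)}^2$) directly into the Cauchy--Schwarz and dissipation estimates, which makes your ``good sampling time'' argument unnecessary: the whole chain $h(t)\le C\int_{t-1}^t\int_\Omega m|\nabla u|^2\,\di s\le C\bigl(\G_{(r+3)}[f(t-1)]-\inf\G_{(r+3)}\bigr)\le C\|f(t-1)-M\gamma m\|_{L^2}^2$ runs without extracting a single time. Both of these are cosmetic; your route is fully rigorous given the tools already established in the paper, and your extra observation that $\nabla(f-M\gamma m)=m\nabla u+(u-M\gamma)\nabla m$ makes the final passage from $u$ to $f$ explicit, which the paper leaves implicit.
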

 \begin{proof}
 In case $\Lambda<0$, the result is just a simple consequence of Lemma \ref{firstBVexp}.
 For $\Lambda\geq 0$, consider the function $h(t)=\|f(t)\|_{BV(\Omega;m)}^2$ for all $t\geq0$. From Lemma \ref{firstBVexp}, for $t\geq 1$ we infer that for some $C>0$ it holds that
 $$
	h(t)\leq C\int_{t-1}^t h(s) \di s.
$$
 Moreover, $\int_1^\infty h(s)\di s<\infty$ since the $BV$ norm can be bounded with the $H^1$ norm. From 
 $$
	h(t)\leq C\int_{t-1}^{\infty} h(s) \di s, \qquad t\geq 1,
$$
we obtain $\lim_{t\to\infty} h(t)=0.$ Improving this from \eqref{H1estimate-semicont2} (or \eqref{H1estimate-semicont}) and comparing the $BV$ norm to the $H^1$ norm, we have 
$$
	h(t)\leq C \left(\G_{(r+3)}[f(t-1)]-\inf\left\{ \G_{(r+3)}[\mu]\;:\; \mu\in\mathcal M_M\right\}\right).
$$
Jensen's inequality together with the convexity of the power $r+3$ provides 
$$
	\inf\left\{ \G_{(r+3)}[\mu]\;:\; \mu\in\mathcal M_M\right\}=\G_{(r+3)}[M\gamma m]=M^{r+3}\gamma^{r+2}.
$$
Since for large $t$ the function $f(t)$ is bounded, we can use the Taylor expansion
 $$
	u^{r+3}\leq (M\gamma)^{r+3}+(r+3)(M\gamma)^{r+2}(u-M\gamma ) +C|u-M\gamma|^2
$$
and apply it to $u=f/m$. Since $M\gamma\int_\Omega m=M= \int_\Omega f=\int_\Omega um$ we have $\int_\Omega (u-M\gamma)m=0$, hence
 $$\G_{(r+3)}[f]- \G_{(r+3)}[M\gamma m]\leq C\int_\Omega |f-M\gamma m|^2.$$
 The $L^2$ exponential convergence result presented in Theorem \ref{L^2exp} allows us to conclude.
  \end{proof}

\begin{remark}
Because of the $L^\infty$ bounds from above and from below, the long-time $L^2$ convergence easily implies $L^p$ convergence for every $p\geq1$, and this convergence is still exponential. On the other hand, getting uniform convergence is a delicate matter: in dimension one it is trivial when $BV$ convergence is guaranteed, in higher dimension it is not.
\end{remark}

 {\bf Conflict of interest:} the Authors declare that they have no conflict of interest.
 \bigskip
 
 {\bf Acknowledgements:} MI thanks Matteo Bonforte for his comments to a preliminary version of this manuscript and Jos\'e A. Carrillo for interesting discussions on this topic during a visit at Imperial College in 2015. FSP thanks the CNA at Carnegie Mellon University for their kind support. FS thanks Imperial College for their warm hospitality in 2017 when this work started, via a CNRS-Imperial fellowship.


\begin{thebibliography}{999999}

\bibitem{AGS} L. Ambrosio, N. Gigli, G. Savar\'e, {\em Gradient flows in metric spaces and in the space of probability measures.} Second edition. Lectures in Mathematics ETH Z\"urich, Birkh\"auser Verlag, Basel, 2008.

\bibitem{Aubin} J.-P. Aubin, {\em Un th\'eor\`eme de compacit\'e.} C. R. Acad. Sci. Paris, 256(24):5042--5044, 1963.

\bibitem{BenCra} P. B\'enilan, M. G. Crandall, {\em The continuous dependence on $\phi$ of solutions of $u_t-\Delta \phi(u)=0$.} Indiana Univ. Math. J., 30(2):161--177, 1981.

\bibitem{BH} J. G. Berryman, J. C. Holland, {\em Asymptotic behaviour of the nonlinear differential equation $n_t=(n^{-1}n_x)_x$.} J. Math. Phys., 23(6):983--987, 1982.

\bibitem{Bill} P. Billinglsey, {\em Convergence of probability measures.} Second edition. Wiley Series in Probability and Statistics, John Wiley \& Sons, Inc., New York, 1999.

\bibitem{BGV1} M. Bonforte, G. Grillo, J. L. V\'azquez, {\em Fast diffusion flow on manifolds of nonpositive curvature.} J. Evol. Equ., 8(1):99--128, 2008. 

\bibitem{BGV} M. Bonforte, G. Grillo, J. L. V\'azquez, {\em Behaviour near extinction for the fast diffusion equation on bounded domains.} J. Math. Pures Appl. (9), 97(1):1-38, 2012.

\bibitem{BV} M. Bonforte, J. L. V\'azquez, {\em Positivity, local smoothing, and Harnack inequalities for very fast diffusion equations.} Advances in Math., 223(2):529--578, 2010.


\bibitem{BV15} M. Bonforte,  J. L. V\'azquez, \emph{A Priori Estimates  for Fractional Nonlinear  Degenerate Diffusion Equations  on bounded domains},  Arch. Rat. Mech. Anal., {218}(1):317--362, 2015.

\bibitem{BV16} M. Bonforte,  J. L. V\'azquez, \emph{Fractional Nonlinear Degenerate Diffusion Equations on Bounded Domains Part I. Existence, Uniqueness and Upper Bounds. }Nonlin. Anal. TMA, {131}:363--398, 2016.

\bibitem{BouJimRaj} G. Bouchitt\'e, C. Jimenez, M. Rajesh, {\em Asymptotique d'un probl\`eme de positionnement optimal.} C. R. Math. Acad. Sci. Paris, 335(10):853--858, 2002.

\bibitem{BR} D. P. Bourne, S. M. Roper, {\em Centroidal power diagrams, Lloyd's algorithm and applications to optimal location problems.} SIAM J. Numer. Anal., 53(6):2545--2569, 2015.


\bibitem{caf1} L. Caffarelli, {\em A localization property of viscosity solutions to the Monge--Amp\`ere equation and their strict convexity.} Ann. of Math., 131(1):129--134, 1990.

\bibitem{caf3} L. Caffarelli, {\em Interior $W^{2,p}$ estimates for solutions of the Monge--Amp\`ere equation.} Ann. of Math., 131(1):135--150, 1990.

\bibitem{caf2} L. Caffarelli, {\em Some regularity properties of solutions of Monge--Amp\`ere equation.} Comm. Pure Appl. Math., 44(8-9):965--969, 1991.

\bibitem{CGI1}E. Caglioti, F. Golse, M. Iacobelli, {\em A gradient flow approach to quantisation of measures.} Math. Models Methods Appl. Sci., 25(10):1845--1885, 2015.

\bibitem{CGI2} E. Caglioti, F. Golse and M. Iacobelli, {\em Quantization of measures and gradient flows: a perturbative approach in the $2$-dimensional case.} To appear in Ann. Inst. H. Poincar\'e Anal. Non Lin\'eaire.

\bibitem{COR}
J. T. Chayes, S. Osher, J. Ralston, {\em On singular diffusion equations with applications to self-organized criticality.}
Comm. Pure Appl. Math., 46(10):1363--1377, 1993. 


\bibitem{Cor99} D. Cordero-Erausquin, {\em Sur le transport de mesures p\'eriodiques.} C. R. Math. Acad. Sci. Paris, 329(3):199--202, 1999.

\bibitem{DalMaso} G. Dal Maso, {\em An introduction to $\Gamma$-convergence.} Progress in Nonlinear Differential Equations and Their Applications, Birkh\"auser Verlag, Basel, 1992.

\bibitem{DasDelP} P. Daskalopoulos, M. Del Pino, {\em On nonlinear parabolic equations of very fast diffusion.} Arch. Ration. Mech. Anal., 137(4):363--380, 1997.

\bibitem{DasKe} P. Daskalopoulos, C.E. Kenig, {\em Degenerate diffusions. Initial value problems and local regularity theory.} Tracts in Mathematics, European Mathematical Society, Z\"urich, 2007.

\bibitem{DG} P. G. De Gennes, {\em Wetting: statics and dynamics.} Rev. Mod. Phys., 57(3):827--863, 1985.

\bibitem{DePMesSan} G. De Philippis, A. R. M\'esz\'aros, F. Santambrogio, B. Velichkov, {\em $BV$ estimates in optimal transportation and applications.} Arch. Ration. Mech. Anal., 219(2):829--860, 2016.

\bibitem{DePFig survey} G. De Philippis, A. Figalli, {\em The Monge--Amp\`ere equation and its link to optimal transportation.}
Bull. Amer. Math. Soc., 51(4):527--580, 2014.

\bibitem{DiFMatt14} M. Di Francesco, and D. Matthes, {\em Curves of steepest descent are entropy solutions for a class of degenerate convection-diffusion equations}, Calc. Var. Partial Differential Equations, 50(1-2), 199--230, 2014.

\bibitem{DiMMauSan} S. Di Marino, B. Maury, F. Santambrogio, {\em Measure sweeping processes.} J. Convex Anal., 23(2):567--601, 2016.   

\bibitem{EvaGar} L. C. Evans, R. F. Gariepy, {\em Measure theory and fine properties of functions.} Revised edition. Textbooks in Mathematics, CRC Press, Boca Raton, FL, 2015.

\bibitem{E} J. R. Esteban, A. Rodr\'iguez, J. L. V\'azquez, {\em A nonlinear heat equation with singular diffusivity.} Comm. Partial Differential Equations 13(8):985--1039, 1988. 

\bibitem{FejTot} L. Fejes T\'oth, {\em Lagerungen in der Ebene auf der Kugel und im Raum.} Grundlehren der mathematischen Wissenschaften, Springer Verlag, Berlin, 1953.

\bibitem{Figbook} A. Figalli, {\em The Monge--Amp\`ere equation and its applications.} Z\"urich Lectures in Advanced Mathematics, European Mathematical Society, Z\"urich, 2017.

\bibitem{GPV} V. A. Galaktionov, L. A. Peletier, J. L. V\'azquez, {\em Asymptotics of the fast-diffusion equation with critical exponent.} SIAM J. Math. Anal., 31(5):1157--1174, 2000.

\bibitem{GL} S. Graf, H. Luschgy, {\em Foundations of quantization for probability distributions.}, Lecture Notes in Mathematics, Springer Verlag, Berlin Heidelberg, 2000.

\bibitem{HP} M.A. Herrero, M. Pierre, {\em The Cauchy problem for $u_t=\Delta u^m$ when $0<m< 1$.} Trans. Amer. Math. Soc., 291:145--158, 1985.

\bibitem{Iac1} M. Iacobelli, {\em Asymptotic quantisation for probability measures on Riemannian manifolds.} ESAIM Control Optim. Calc. Var., 22(3):770--785, 2016.


\bibitem{Iac2} M. Iacobelli, {\em Asymptotic analysis for a very fast diffusion equation arising from the 1D quantisation problem.} To appear in Discrete Contin. Dyn. Syst.

\bibitem{JorKinOtt} R. Jordan, D. Kinderlehrer, F. Otto, {\em The variational formulation of the Fokker--Planck equation.} SIAM J. Math. Anal., 29(1):1--17, 1998

\bibitem{Klo} B. Kloeckner, {\em Approximation by finitely supported measures.} ESAIM Control Optim. Calc. Var., 18(2):343--359, 2012.




\bibitem{LavSan} H. Lavenant, F. Santambrogio, {\em Optimal density evolution with congestion: $L^\infty$ bounds via flow interchange techniques and applications to variational mean field games.} To appear in Comm. Par. Diff. Eq.

\bibitem{Lloyd} S. P. Lloyd, {\em Least squares quantization in PCM.} IEEE Trans. Inf. Theory, 28(2):129--137, 1982.

\bibitem{MattMcCSav} D. Matthes, R. J. McCann, G. Savar\'e, {\em A family of nonlinear fourth order equations of gradient flow type.} Comm. Partial Differential Equations, 34(11):1352--1397, 2009.

\bibitem{Merigot} Q. M\'erigot, {\em A multiscale approach to optimal transport.} Computer Graphics Forum, 30(5):1583--1592, 2011.

\bibitem{MC} R. J. McCann, {\em A convexity principle for interacting gases.} Adv. Math., 128(1):153--159, 1997.

\bibitem{MorBol} F. Morgan, R. Bolton, {\em Hexagonal economic regions solve the location problem.} Amer.
Math. Monthly, 109(2):165--172, 2002.



\bibitem{MosTil} S. Mosconi and P. Tilli, {\em $\Gamma$-convergence for the irrigation problem.} J. of Conv. Anal., 12(1):145--158, 2005.

\bibitem{Mos} J. Moser, {\em A new proof of De Giorgi's theorem concerning the regularity problem for elliptic
differential equations.} Comm. Pure Appl. Math., 13(3):457--468, 1980.


\bibitem{O} F. Otto, {\em The geometry of dissipative evolution equations: the porous medium equation.} Comm. Partial Differential Equations, 26(1-2):101--174, 2001.




 \bibitem{OTAM} F. Santambrogio, {\em Optimal transport for applied mathematicians.} Progress in Nonlinear Differential Equations and Their Applications, Birkh\"auser, Basel, 2015.
 
\bibitem{arabsurvey}F. Santambrogio, {\em $ \{$Euclidean, metric, and Wasserstein$\}$ gradient flows: an overview.} Bull. Math. Sci., 7(1):87-154, 2017.

\bibitem{VazNonex} J. L. V\'azquez, {\em Nonexistence of solutions for nonlinear heat equations of fast-diffusion
type.} J. Math. Pures. Appl., 71(6):503--526, 1992.

\bibitem{Vazquez} J. L. V\'azquez, {\em Failure of the strong maximum principle in nonlinear diffusion. Existence of needles.} Comm. Partial Differential Equations, 30(9):1263--1303, 2005.

\bibitem{Va} J. L. V\'azquez, {\em Smoothing and decay estimates for nonlinear diffusion equations.} Mathematics and Its Applications, Oxford University Press, New York, 2006.


\bibitem{Va2} J. L. V\'azquez, {\em The porous medium equation. Mathematical theory.} Mathematical Monographs. The Clarendon Press, Oxford University Press, Oxford, 2007.

\bibitem{Vill} C. Villani, {\em Topics in optimal transportation.} Graduate Studies in Mathematics, American Mathematical Society, Providence, RI, 2003.
\end{thebibliography}
\end{document}